\numberwithin{equation}{section}
\def\N{\mathbb{N}}
\def\R{\mathbb{R}}
\renewcommand\d{\partial}
\def\g{\gamma}
\def\G{\Gamma}
\def\l{\lambda}
\def\epsilon{\varepsilon}
\def\e{\varepsilon}
\def\L{\Lambda}
\newcommand\br{\begin{rem}}
\newcommand\er{\end{rem}}
\newcommand\bp{\begin{pmatrix}}
\newcommand\ep{\end{pmatrix}}
\newcommand\be{\begin{equation}}
\newcommand\ee{\end{equation}}
\newcommand\ba{\begin{equation}\begin{aligned}}
\newcommand\ea{\end{aligned}\end{equation}}
\newtheorem{theorem}{Theorem}[section]
\newtheorem{proposition}[theorem]{Proposition}
\newtheorem{lemma}[theorem]{Lemma}
\newtheorem{corollary}[theorem]{Corollary}
\newtheorem{remark}[theorem]{Remark}
\newtheorem{ans}[theorem]{Definition}
\title{Metastable dynamics of internal interfaces for a convection-reaction-diffusion equation} 
\begin{document}

\maketitle

\begin{center}
MARTA STRANI\footnote{Universit\`a di Milano Bicocca, Dipartimento di Matematica e Applicazioni, Milano (Italy). E-mail address: \texttt{marta.strani@unimib.it}, \texttt{martastrani@gmail.com}.}
\end{center}
\vskip1cm

\begin{abstract}
We study a one dimensional metastable dynamics of internal interfaces for the initial boundary value problem for the following convection-reaction-diffusion equation 
\begin{equation*}
\d_t u = \e \d_x^2 u -\d_x f(u)+ f'(u).
\end{equation*}
A metastable behavior appears when the time-dependent solution develops into a layered function in a relatively short time, and subsequent approaches its steady state in a very long time interval.
A rigorous analysis is used to study such behavior, by means of the construction of a one-parameter  family $\{ U^\e(x;\xi)\}_\xi$ of approximate stationary solutions and of a linearization of the original system around an element of this family. We obtain a system consisting of an ODE for the parameter $\xi$, describing the position of the interface, coupled  with a PDE for the perturbation $v$, defined as the difference $v:=u-U^\varepsilon$. The key of our analysis are the spectral properties of the linearized operator around an element of the family $\{ U^\varepsilon \}$: the presence of a first eigenvalue, small with respect to $\varepsilon$, leads to a metastable behavior when $\varepsilon \ll 1$.

\end{abstract}

\begin{quote}\footnotesize\baselineskip 14pt 
{\bf Key words.} 
Metastability, slow motion, internal interfaces, reaction-convection-diffusion equations, spectral analysis.
 \vskip.15cm
\end{quote}

\begin{quote}\footnotesize\baselineskip 14pt 
{\bf AMS subject classification.} 
35K20, 35B36, 35B40, 35P15
 \vskip.15cm
\end{quote}

\pagestyle{myheadings}
\thispagestyle{plain}
\markboth{M.STRANI}{METASTABILITY FOR A CONVECTION-REACTION-DIFFUSION EQUATION}

\section{Introduction}

The slow motion of internal shock layers has been widely studied for a large class of evolutive PDEs on the form
\begin{equation*}
\d_t u = \mathcal P^\e[u],
\end{equation*}
where $\mathcal P^\e[u]$ is a nonlinear differential operator that  depends singularly on the parameter $\e$. Such phenomenon is known as {\it metastability}. The qualitative features of a metastable dynamics are the following: through a transient process, a pattern of internal layers is formed from initial data over a $\mathcal O(1)$ time interval; once this pattern is formed, the subsequent motion of such interfaces is exponentially slow, converging to their asymptotic limit. As a consequence, two different time scales emerge: for short times, the solutions are close to some non-stationary state; subsequently, they drift towards the equilibrium solution with a speed rate that is exponentially small.

In other words, the equation exhibits in finite time metastable shock profiles (called {\it interfaces}) that persist during an exponentially (with respect to a small parameter) long time period  and that move with exponentially slow speed.

\vskip0.2cm
Many fundamental partial differential equations, concerning different areas, exhibit such behavior. Among others, we include viscous shock problems (see, for example \cite{LafoOMal94}, \cite{LafoOMal95}, \cite{MS}, \cite{ReynWard95} for viscous conservation laws, and \cite{BerKamSiv01}, \cite{SunWard99} for Burgers type's equations), relaxation models as the Jin-Xin system \cite{Str12}, phase transition problems described by the Allen-Cahn equation, with the fundamental contributions \cite{CarrPego89, FuscHale89} and the most recent references \cite{OttoRezn06,Str13}, and the Cahn-Hilliard equation studied in \cite{AlikBateFusc91} and \cite{Pego89}.

In this paper we study the slow motion of internal interfaces generated by the evolution of the solution to a convection-reaction-diffusion equation. Given $\ell >0$, we consider the  initial-boundary value problem
\begin{equation}\label{ForcBurg}
 \left\{\begin{aligned}
		\partial_t u	& =\varepsilon\,\partial_x^2u - \partial_x f(u)+ f'(u),
		&\qquad &x\in I, t > 0,\\
 		u(0,t)	& =u(\ell,t)=0, &\qquad &t >0,\\
		u(x,0)		& =u_0(x), &\qquad &x\in I,
 	 \end{aligned}\right.
\end{equation}
where $I=[0,\ell]$ is a bounded interval of the real line, and the unknown $u \in C^0(\R^+;H^1(I))$. Here $\e$ is a small and positive parameter, that can be seen as a viscosity coefficient, while $f$  satisfies
\begin{equation}\label{ipof}
f''(u) >0, \quad f(0)=f'(0)=0, \quad f'(-u)=-f'(u).
\end{equation}
The main example we have in mind is the initial-boundary value problem for the generalized Burgers equation, also known as the Burgers-Sivashinsky equation,  that is
\begin{equation}\label{ex:burgers}
\partial_t u =\varepsilon\,\partial_x^2u - u \partial_x u+ u,
\end{equation}
together with boundary conditions and initial datum $u_0(x)$. Such equation arises from the study of the dynamics of an upwardly propagating flame-front in a vertical channel (see \cite{RakSiv87}); precisely, setting $u(x,t)=-\partial_x y(x,t)$,  the dimensionless shape $y=y(x,t)$ of the flame front interface satisfies 
\begin{equation}\label{SW2}
\left\{\begin{array}{ll}
\begin{aligned}
\partial_t y&=\frac{1}{2}(\partial_x y)^2+\varepsilon \partial_x^2 y+y-\int_0^\ell y \ dx, \quad &x \in (0,\ell), \quad t > 0, \\
\partial_x y(0, t)&=\partial_x y(\ell,t) =0, \quad & t >0, \\
y(x,0)&=y_0(x), \quad & x \in (0,\ell).
\end{aligned}
\end{array}\right.
\end{equation}

\vskip0.2cm
We mean to analyze the behavior of solutions $u^\e$ to \eqref{ForcBurg} in the vanishing viscosity limit, i.e. $\varepsilon\to 0$. In particular, we address the question whether a phenomenon of metastability occurs for such kind of problem, and how this special dynamics is related to the size of the viscosity coefficient and to the choice of the initial datum $u_0$.  
\vskip0.2cm
In the limit $\e\to 0$, equation \eqref{ForcBurg} formally reduces to the first order hyperbolic equation
\begin{equation}\label{burgforchyp}
\d_t u= - \d_xf(u)+ f'(u), \quad u(x,0)=u_0(x).
\end{equation}
The set of solutions for such equation is the one  given by the entropy formulation, in the sense of Kruzkov (see \cite{Kru70}). In this case it is well know the existence and uniqueness of the solution $U_{{}_{\rm hyp}}(x,t)$ to the Cauchy problem for \eqref{burgforchyp} (i.e. $x \in \R$). Moreover, denoting by $u^\e$ the solution to \eqref{ForcBurg}, it is possible to prove that  
\begin{equation*}
 u^\e(x,t) \to U_{{}_{\rm hyp}}(x,t) \ \ \ {\rm as} \ \ \ \ \e \to 0,
 \end{equation*}
 (for more details, see \cite{Lax54,Lyb91,Ole63}). Finally, such convergence  is in $L^1_{{}_{\rm loc}}$, and it is uniform away from shock waves, as proven in \cite{Nes96}.
 \vskip0.2cm
 When Dirichlet boundary conditions are taken into account, the case of a bounded domain is more delicate than the Cauchy problem. First of all, the boundary conditions $u(0,t)=u(\ell,t)=0$ has to be interpreted in a nonclassical way in the sense of \cite{BardLeRoNede79}. 

Concerning existence results and asymptotic behavior for the solutions to the boundary-value-problem for  \eqref{burgforchyp}, a complete analysis  has been performed by C. Mascia and A. Terracina in \cite{MascTerr99}; here the authors deal  with a general reaction-convection equation of the form
\begin{equation}\label{RCintro}
 \left\{\begin{aligned}
		\partial_t u	& = - \partial_x f(u)+ g(x,u),
		&\qquad &x\in (0,\ell), t \geq 0,\\
 		u(0,t)	& =u_-, \ \ u(\ell,t)=u_+, &\qquad &t \geq0,\\
		u(x,0)		& =u_0(x), &\qquad &x\in (0,\ell),
 	 \end{aligned}\right.
\end{equation}
showing how the boundary conditions influence the large-time behavior of solutions. Moreover, they prove the presence of discontinuous stationary solutions for \eqref{RCintro}, that correspond to stationary solutions with internal layers for the viscous problem when $\e >0$.

\vskip0.2cm
When $\e >0$, the presence of a diffusive term has a smoothing effect on solutions. Indeed, all the discontinuities turn into smooth internal layers, that are less sharp as $\e$ increases. 

Numerical computations show that, for a certain class of initial data, equation \eqref{ex:burgers} exhibits a metastable behavior. Precisely, in \cite{BerKamSiv95}, it has been proved that a sufficient condition for the appearance of a metastable dynamics is that the initial datum satisfies
\begin{equation}\label{u0meta}
u_0(x) <0 \ \ {\rm in} \ \ (0,a_0), \quad u_0(x) >0 \ \ {\rm in} \ \ (a_0, \ell) \ \ \ {\rm for \ some } \ \ a_0 \in (0,\ell),
\end{equation} 
where $u_0(x)$ has to be consider in $ C^0_0(I)$.

Starting from such initial configuration, one observe that an interface is formed in an $\mathcal O(1)$ time scale. Once such interface is formed, it starts to move towards one of the wall $x=0$ or $x=\ell$, but this motion is extremely slow (see Figure \ref{fig0}). Hence, two different time scales emerge: a first transient phase of order $\mathcal O(1)$ in time where the interface is formed, and an exponentially long time phase, that can be extremely long provided $\e$ very small, where the interface drifts towards its equilibrium configuration. 

\begin{figure}
\centering
\includegraphics[width=14cm,height=10cm]{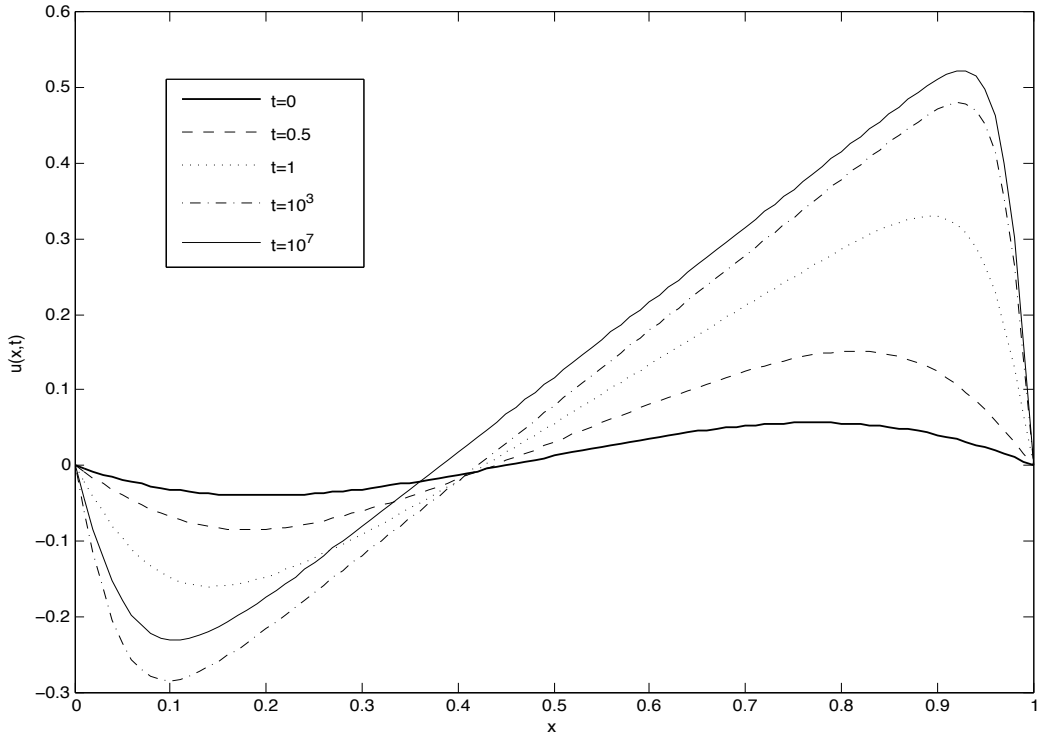}
\caption{\small{The evolution of the solution to \eqref{ForcBurg} with $f(u)=u^2/2$ and $\varepsilon=0.01$. Once the interface is formed in an order $1$ time scale, we observe that it starts to move towards its equilibrium configuration, corresponding in this case to the wall $x=0$. As we can see from the picture, this motion is extremely slow. }}\label{fig0}
\end{figure}

In terms of the shape $y(x,t)$, in a first stage of its dynamics  the solution assumes a somewhat asymmetric parabolic shape. In particular, the tip of such parabola corresponds to the point where $u$ vanishes. The subsequent motion of the tip of the parabolic flame-front interface towards one of the wall $x=0$ or $x=\ell$ can be extremely slow with respect to the parameter $\e$.
\vskip0.2cm

There are several papers concerning the dynamics of solutions to equation \eqref{ForcBurg}, that is  a special case of a more general class of convection-reaction-diffusion equations on the form
\begin{equation}\label{CRDeq}
\d_t u = \e \d_x^2 u- f(x,u) \d_x u + g(x,u).
\end{equation}
The initial-boundary value problem in different dimensional sets  for \eqref{CRDeq} has been investigated in a numbers of works, under different assumptions for the functions $f$ and $g$. To name some of these papers, we recall here \cite{BobOreRoy87, CheLevSac88, HillSul95, How88, HowWhi88}.

 \vskip0.2cm
A pioneering article in the study of the metastable dynamics of solutions for the convection-reaction-diffusion equation \eqref{ForcBurg} is the reference \cite{BerKamSiv01}. Here the authors analyze the specific case $f(u)=u^2/2$ (Burgers-Sivashinsky equation), proving that there exist different types of stationary solutions: more precisely, there exist one positive stationary solution $U^+_\e$ and one negative stationary solution $U^-_\e$ that are linearly stable, and two other unstable equilibrium solutions $U^\pm_{\e,1}$, that have exactly one zero inside the interval. 

Moreover, it turns out that the stationary solution $U^-_{\e,1}$ (that, in terms of the shape $y$, corresponds to a parabolic-shaped flame-front interface) is {\bf metastable}. Indeed, concerning the time-dependent problem, in \cite{BerKamSiv01}  the authors rigorously prove that, for the particular class of initial data considered  in \cite{BerKamSiv95}, the solutions generated by such initial configurations exhibit a metastable behavior, in the sense that they remain close to the initial configuration for a time of order $T_\e=e^{1/\e}$, before converging to one of the stable steady states $U^\pm_\e$.

Such slow motion is a consequence of the presence of a first small eigenvalue associated with  the linearization around the unstable stationary solution $U^-_{\e,1}$: in particular, $\lambda_1$ is positive but exponentially small in $\e$. Therefore, starting from initial data that are small perturbation of such unstable steady state, the corresponding time-dependent solution starts to move towards one of the  stable equilibrium configurations $U^\pm_\e$, but this motion is extremely slow, since it is described by terms of order $e^{\lambda_1^\e t}$.

Hence, differently from the cases considered in \cite{MS,MS2,Str12}, here the metastable behavior is characterized by the fact that the steady state $U^-_{\e,1}$ is unstable, so that the solutions starting from an initial configuration close to $U^-_{\e,1}$ are pushed away towards their asymptotic limit, but the time of convergence can be extremely long.

\vskip0.2cm

The problem of slow motion for equation \eqref{ForcBurg} has been examined  also in \cite{SunWard99} for a generic flux function $f(u)$ that satisfies hypotheses \eqref{ipof}. Here the authors show that the first eigenvalue associated with the linearized problem around the equilibrium solution $U^-_{\e,1}$ is exponentially small with respect to $\e$; more precisely,  they provide an asymptotic expression for such principal eigenvalue. The presence of a first small eigenvalue leads to a metastable behavior for the time-dependent problem, studied by using the so-called projection method:  the authors are able to derive an (asymptotic) ordinary differential equation describing the motion of the unique zero of the solution $u$, that corresponds, in term of the shape $y$, to the tip location of a parabolic-shaped interface.
\vskip0.2cm
The existence and the stability properties of stationary solutions to \eqref{ForcBurg} that have more than one zero inside the interval, and the corresponding time-dependent problem, has been conversely studied in \cite{Goo94}.

\vskip0.2cm
The aim of this paper is to prove existence results for the stationary solutions to \eqref{ForcBurg} in the case of a generic flux function $f(u)$, and to rigorously study the subsequent motion of solutions starting from initial data on the form \eqref{u0meta}. 
\vskip0.2cm
The first part of our study thus concerns the stationary problem
\begin{equation}\label{stazprobintro}
\varepsilon\,\partial_x^2u = \partial_x f(u)- f'(u),
\end{equation}
complemented with boundary conditions $u(0)=u(\ell)=0$. Our first main result gives a  description of the solutions to \eqref{stazprobintro}.

\begin{theorem}
There exist one positive solution $U_{\e,+}$ and one negative solution $U_{\e,-}$ to \eqref{stazprobintro}. Additionally,
\begin{itemize}
\item For $\e \to 0$, $U_{\e,+}$ converges pointwise to $x$ in $(0,\ell)$. 
\item For $\e \to 0$, $U_{\e,-}$ converges pointwise to $x-\ell$ in $(0,\ell)$. 
\end{itemize}
Moreover, there exist two other solutions $U^\e_{{}_{M}}$ and $U^\e_{{}_{NS}}$ that have one zero inside the interval $(0,\ell)$ and such that
\begin{itemize}
\item For $\e \to 0$, $U^\e_{{}_{M}}$ converges pointwise to $x-\ell/2$ in $(0,\ell)$. 
\item For $\e \to 0$, $U^{\e}_{{}_{NS}}$ converges pointwise to the function 
\begin{equation*}
U^0_{{}_{NS}}(x):= \left\{ \begin{aligned}
& x \quad & x \in (0,\ell/2), \\
& x-\ell \quad & x \in (\ell/2,\ell),
\end{aligned}\right.
\end{equation*}
for $x\neq \ell/2$. 
\end{itemize}
\end{theorem}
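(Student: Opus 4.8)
The plan is to read \eqref{stazprobintro} as the autonomous second-order equation $\varepsilon u'' = f'(u)(u'-1)$ and to exploit an explicit first integral. Setting $p=u'$ one has $\varepsilon p\,\frac{dp}{du}=f'(u)(p-1)$, which separates and integrates to show that
\[
H(u,p):=\varepsilon\bigl(p+\ln|p-1|\bigr)-f(u)
\]
is constant along every orbit. Two structural facts organize the whole proof. First, the line $p\equiv1$ is invariant and carries exactly the affine solutions $u=x+c$, which are the outer profiles in the statement. Second, the oddness of $f'$ makes $f$ even, so the turning points ($p=0$) of the level set $\{H=-f(a)\}$ are the symmetric pair $u=\pm a$, and for each $a>0$ this level set is a closed orbit oscillating between $-a$ and $a$; moreover $u\mapsto-u(\ell-\,\cdot\,)$ is a symmetry of \eqref{stazprobintro} preserving the boundary conditions.

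Existence is then a shooting argument read off these explicit orbits. For $a>0$ let $T(a)$ denote the $x$-increment needed to traverse one full period of the orbit $\{H=-f(a)\}$, computed as $\int du/p(u)$ with $p(u)$ obtained by inverting $H=-f(a)$. Linearizing at $u=0$ gives $u''\approx-(f''(0)/\varepsilon)\,u$, so $T(a)\to 2\pi\sqrt{\varepsilon/f''(0)}$ as $a\to0$, while $p<1$ on the ascending branch forces $T(a)>2a\to\infty$ as $a\to\infty$; by continuity and the intermediate value theorem, for every $\varepsilon$ small enough that $\ell>2\pi\sqrt{\varepsilon/f''(0)}$ there is an amplitude $a$ with $T(a)=\ell$. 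Launching this orbit from the two points where it meets $\{u=0\}$ — the ascending crossing (slope near $1$) and the descending crossing (steep slope) — gives two functions that return to $u=0$ exactly at $x=\ell$: these are $U^\varepsilon_{{}_{NS}}$ and $U^\varepsilon_{{}_{M}}$, and the reflection symmetry fixes each of them so that $U(\ell/2)=0$, which is their unique interior zero. Running the same argument on the partial traversal $0\to a\to0$ with travel time $\ell$ produces the strictly positive bump $U_{\varepsilon,+}$, and $U_{\varepsilon,-}:=-U_{\varepsilon,+}(\ell-\,\cdot\,)$ is the negative solution.

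It remains to identify the $\varepsilon\to0$ limits, which is an asymptotic evaluation of the travel-time integral. On the ascending branch, inverting $H=-f(a)$ gives $p+\ln|p-1|=(f(u)-f(a))/\varepsilon$; for $u$ in any compact subset of $(-a,a)$ the right-hand side tends to $-\infty$, and since $p+\ln|p-1|\to-\infty$ (with $p<1$) only as $p\to1^-$, we get $p(u)\to1$. Hence the ascending part of the orbit sweeps its $u$-interval in time converging to the length of that interval, whereas on the descending branch the same relation forces $|p|\to\infty$ away from the turning points, so that sweep is traversed in time $O(\varepsilon)$ and forms the layer. Imposing $T=\ell$ therefore forces $a\to\ell/2$ for the full-period solutions $U^\varepsilon_{{}_{M}},U^\varepsilon_{{}_{NS}}$ and $a\to\ell$ for $U_{\varepsilon,\pm}$, and away from the $O(\varepsilon)$-wide layers the solution is uniformly close to an affine profile $u=x+c$ with $c$ pinned by the relevant boundary or midpoint value. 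This gives the stated pointwise limits: $U_{\varepsilon,+}\to x$ (layer at $x=\ell$), $U_{\varepsilon,-}\to x-\ell$ (layer at $x=0$), $U^\varepsilon_{{}_{M}}\to x-\ell/2$ (boundary layers at both ends), and $U^\varepsilon_{{}_{NS}}\to U^0_{{}_{NS}}$ (a single interior layer at $\ell/2$).

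The main obstacle is precisely this last step. The integrand $1/p(u)$ must be analysed through the logarithmic singularity of $H$ along $p=1$ and the square-root vanishing of $p$ at the turning points $p=0$, so establishing at once that (i) the amplitude selected by $T(a)=\ell$ converges to $\ell$ or $\ell/2$ and that (ii) the layers shrink to width $O(\varepsilon)$ requires splitting $\int du/p(u)$ into a slow part ($p\approx1$) and layer/turning-point parts and controlling each uniformly as $\varepsilon\to0$. Upgrading pointwise to uniform closeness on compact subsets away from the layers — which is what actually pins the interior zero at $\ell/2$ and excludes spurious oscillations — is the delicate point; by contrast the first integral, the orbit structure, and the intermediate-value existence are elementary.
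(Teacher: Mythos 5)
Your proposal is correct in substance, but it takes a genuinely different route from the paper. The paper stays entirely within the comparison-principle framework: it produces $U_{\e,+}$ by monotone iteration between the explicit subsolution $\alpha\sin(\pi x/\ell)$ and the supersolution $x$ (Proposition \ref{propstazpos}), obtains uniqueness by the rescaling $u(x)=\e^{\beta}v(x/\e^{\beta})$ together with the strict monotonicity of the first zero $z(\alpha)$ of the shooting problem, imported from \cite{BerKamSiv01}; it then assembles $U^\e_{{}_{M}}$ (and similarly $U^\e_{{}_{NS}}$, $U_{\e,-}$) by gluing reflected copies of the positive solution on $(0,\ell/2)$ and $(\ell/2,\ell)$, using the oddness of $f'$, and proves the $\e\to0$ limits by combining monotonicity in $\e$ (Lemma \ref{introconv}) with the ad hoc piecewise barriers $h^{\l}_{a,c}$ of Proposition \ref{convergenza}. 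You instead integrate the ODE once: your first integral $H(u,p)=\e\bigl(p+\ln|p-1|\bigr)-f(u)$ is correct, as is the closed-orbit structure in $\{p<1\}$ with turning points at $\pm a$ (for $p<1$ the function $p+\ln|p-1|$ has a strict maximum $0$ at $p=0$, so each level $H=-f(a)$ is a closed curve around the center $(0,0)$), and you convert the boundary-value problem into a time map, with existence by the intermediate value theorem and the limits by asymptotics of the period integral. What each approach buys: the paper's route yields uniqueness and the qualitative bounds ($0<U_{\e,+}\leq x$, concavity, ordering in $\e$) essentially for free and avoids any singular-integral asymptotics; your route produces all four solutions from a single orbit family, explains the limits mechanically ($p\to1^-$ on the slow branch, $|p|\to\infty$ on the fast branch), and locates the interior zero at $\ell/2$ without comparison arguments — though here you should say explicitly that the $u\mapsto-u$ symmetry of the orbit (from $f$ even) makes the traversal times of symmetric arcs equal, so the constructed solution itself is invariant; as phrased, "the reflection symmetry fixes each of them" is not automatic, since a priori the reflection $u\mapsto -u(\ell-\cdot)$ could map your solution to a different solution with the same boundary data.

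Two caveats. First, if "one positive solution" is read, as in Proposition \ref{propstazpos}, as uniqueness, your argument is incomplete: the intermediate value theorem gives an amplitude $a$ with travel time $\ell$ but not its uniqueness; you would need strict monotonicity of the time map in $a$, which is exactly the $z(\alpha)$-monotonicity the paper cites from \cite{BerKamSiv01}. Second, the fast sweep is not traversed in time $O(\e)$: near the turning points one has $p\approx-\sqrt{2(f(a)-f(u))/\e}$, so the corner regions contribute $O(\sqrt{\e})$ and the layers have width $O(\sqrt{\e})$, not $O(\e)$. This does not affect the stated pointwise limits — all that is needed is that the fast-sweep time tends to zero, and your own outline of splitting the integral into slow, layer, and turning-point parts is the right program — but the quantitative claim should be corrected. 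Finally, note that your existence argument requires $\ell>2\pi\sqrt{\e/f''(0)}$, i.e.\ $\e$ small, which is harmless for this theorem since all assertions concern the regime $\e\to0$.
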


Concerning the long-time dynamics of the solutions to the time-dependent problem \eqref{ForcBurg}, the strategy we mean to use here is analogous to the one firstly performed in \cite{MS} to study the slow motion of internal layers for parabolic evolutive systems, under appropriate assumptions on the spectrum of the linearized operator around the steady state. In particular, in \cite{MS} it was required that the spectrum of such linearized operator was composed by real and negative eigenvalues (for more details, see \cite[Section 2]{MS} ). For the problem studied in this paper, we know that there exists a first positive eigenvalue that is exponentially small in $\e$, so that the stationary solution is unstable but it is metastable, as already stressed before.  Hence, we need to slightly modify the assumptions we have to require. This is the strategy we mean to follow.
\vskip0.1cm
$\bullet$ We ask for the existence of a one parameter family of functions $\{ U^\e(x;\xi)\}_{\xi \in I}$, where the parameter $\xi$ represents the unique zero of the function $U^\e$, i.e. the location of the interface, and such that each element of the family can be seen as an approximation of the unstable steady state $U^\e_{{}_{M}}$ in a sense that we will specify later.

\vskip0.1cm
$\bullet$ We linearize around an element of the family, by looking for a  solution to \eqref{ForcBurg} in the form $u(x,t)= U^\e(x;\xi(t))+ v(x,t)$.

\vskip0.1cm
$\bullet$ We use a modified version of the projection method in order to obtain a coupled system for the perturbation $v$ and for the interface location $\xi$.

\vskip0.1cm
$\bullet$ We prove that, under appropriate assumptions on the spectrum of the linearized operator obtained from the linearization around the steady state and on the family of functions $\{U^\e\}$, a metastable behavior occurs.

\vskip0.2cm
This strategy dates back the work of Carr and Pego \cite{CarrPego89}: here the authors consider a function $u^{\xi}(x)$  (with $\xi=(\xi_1, \, ......\, , \xi_N)$ describing the position of the interfaces) which approximates a metastable state with $N$ transition layers. The admissible layer positions lie in a set $\Omega_{\rho}$, where $\xi_j-\xi_{j-1}>\varepsilon/\rho$, so that the set of states $u^\xi$ forms an $N$-dimensional manifold $\mathcal M=\{ u^\xi: \xi \in \Omega_{\rho}\}$. To study the dynamics of solutions located near $\mathcal M$, the authors linearize around an element of the manifold and study spectral properties of the linearized operator.

Starting from there, the strategy of constructing invariant manifolds of approximate steady states have been widely used to describe the slow motion of solutions to different PDEs. We quote here  the reference \cite{BeckWayn09}, as well as the more recent contributions \cite{MS,Str12,Str13}.

The projection method has been also employed in \cite{SunWard99}: here the authors utilize such approach in order to give an explicit asymptotic characterization of the metastable motion of the interface, by deriving an
{\it asymptotic} ordinary differential equation describing its slow motion (see \cite[Section 4]{SunWard99}).
\vskip0.2cm
In our framework, the main difference with respect to \cite{SunWard99} is that here we derive a {\it rigorous} ODE for the position of the interface, where also the nonlinear terms are taken into account; these terms keep track of the nonlinear evolution of the variable $\xi$, when starting far away from its equilibrium configuration.

Also, the projection method allows us to derive an explicit PDE for the perturbation $v$; in order to use a spectral decomposition for  $v$, we exploit the spectral properties of the linearized operator and we achieve a rigorous result by deeply using the structure of the equation.

\vskip0.2cm

Going deeper in  details, these are the objects we shall use in the following and the hypotheses we need to state.
\vskip0.2cm
{\bf H1. Hypotheses on the family of approximate steady states} 
\vskip0.2cm
There exists a  family  $\{ U^\e(x;\xi) \}_{\xi \in I}$  such that

\vskip0.2cm

{\bf i)} There exists a value $\bar \xi \in I$ such that the element $U^\e(x;\bar \xi)$ corresponds to a stable 

\quad \, \ steady state for the original equation.

{\bf ii)} For every $\xi \in I$, each element of the family satisfies
\begin{equation*}
U^\e(x;\xi) >0 \ \ \ {\rm for} \ \ \ x \in (0,\xi) \quad {\rm and} \quad  U^\e(x;\xi) <0 \ \ \ {\rm for} \ \ \ x \in (\xi,\ell).
\end{equation*}

{\bf iii)} There exists a family of smooth and positive functions $\Omega^\e(\xi)$, uniformly converging 

\quad \, \,  to zero as $\e \to 0$, such that there holds
\begin{equation*}
|\langle \psi(\cdot ), \mathcal P^\e[U^\e(\cdot;\xi)]\rangle| \leq \Omega^\e(\xi)|\psi|_{L^\infty}, \quad \forall \, \psi \in C(I), \, \forall \, \xi \in I.
\end{equation*}

{\bf iv)} There exists a family of smooth positive 
functions $\omega^\varepsilon=\omega^\varepsilon(\xi)$, uniformly convergent 

\quad \, \ to zero as 
$\varepsilon\to 0$,  such that
\begin{equation*}
	\Omega^\varepsilon(\xi)\leq \omega^\varepsilon(\xi)\,|\xi-\bar \xi|.
\end{equation*}

\vskip0.2cm
{\bf H2. Hypothesis on the eigenvalues and on the eigenfunctions of the linearized operator} 

\vskip0.2cm
Let  $\mathcal L^\e_\xi$ be the linearized operator obtained from the linearization of the equation \eqref{ForcBurg} around an element of the family $\{ U^\e(x;\xi) \}$. Also, let  $\{ \lambda^\e_k(\xi)\}_{k \in \N}$ be the sequence of the eigenvalues of $\mathcal L^\e_\xi$, and let $\varphi^\varepsilon_k(\cdot;\xi)$ and $\psi^\varepsilon_k(\cdot;\xi)$ be the eigenfunctions of $\mathcal L^\e_\xi$ and its adjoint $\mathcal L^{\e,*}_\xi$ respectively.

\vskip0.2cm

{\bf i)} The sequence of eigenvalues $\{ \lambda^\e_k(\xi)\}_{k \in \N}$ is such that 
\vskip0.2cm
\begin{itemize}
\item  $\l^\e_1(\xi) \to 0$ as $\e \to 0$ uniformly with respect to $\xi$.
\vskip0.1cm
\item  All the eigenvalues $\{ \l^\e_k \}_{k\geq 2}$ are negative and there exist constants $C,C'$ such that
\begin{equation*}
\lambda^\e_1(\xi)-\lambda_2^\e(\xi) \geq C' \ \ \  \forall \ \xi \in I, \qquad \l_k^\e(\xi) \leq -C k^2, \ \ \ {\rm if} \ \ k \geq 2.
\end{equation*}

\end{itemize}

\vskip0.2cm
{\bf ii)} The eigenfunctions $\varphi^\varepsilon_k(\cdot;\xi)$ and $\psi^\varepsilon_k(\cdot;\xi)$
are normalized so that
\begin{equation*}
	\langle \psi^\varepsilon_1(\cdot;\xi), \partial_{\xi}U^{\varepsilon}(\cdot;\xi)\rangle=1
	\qquad\textrm{and}\qquad
	\langle \psi^\varepsilon_j, \varphi^\varepsilon_k\rangle
	=\left\{\begin{aligned} & 1	&\qquad &\textrm{if }j=k,\\ & 0	&\qquad &\textrm{if }j\neq k,
		\end{aligned}\right.
\end{equation*}
\quad \ \ \  \ and we assume 
\begin{equation*}
	\sum_{j} \langle \partial_\xi \psi^\varepsilon_k, \varphi^\varepsilon_j\rangle^2
	=\sum_{j} \langle \psi^\varepsilon_k, \partial_\xi \varphi^\varepsilon_j\rangle^2
	\leq C	\qquad\qquad\forall\,k.
\end{equation*}
\quad \ \ \  \  for some constant $C$ that does not depend on $\xi$.

\begin{remark} { \rm

Hypothesis {\bf H1} concerns the behavior of the family $\{ U^\varepsilon\}$ in the limit $\varepsilon \to 0$. Because of these assumptions, each element of the family can be seen as an approximation for the exact steady state of the equation. In particular, the term $\Omega^\varepsilon$ measures how far is an element of the family from being an exact steady state; indeed, $\mathcal P^\varepsilon[\bar u]$ is equal to zero when $\bar u$ solves the stationary equation. We also point out that the existence of this family is not guaranteed, and its expression depends on the specific form of the function $f$; as we will see in section 5 for the special case $f(u)=u^2/2$, the functions $U^\e$ have to be constructed ``by hands", and it has to be checked that the hypotheses required are satisfied.

Hypothesis ${\bf H2}_{{\bf i}}$ is a crucial hypothesis regarding the spectral properties of the linearized operator when linearizing around an  approximate steady state $U^\varepsilon$. The assumptions we asked are however justified by the the behavior of the exact steady states solutions to \eqref{ForcBurg}. Also, we will show in section 5 that these hypotheses are satisfied in the special case of the Burger-Sivashinsky equation, i.e. $f(u)=u^2/2$. 

Finally, hypothesis ${\bf H2}_{{\bf ii}}$ is a technical hypothesis needed in order to prove our main Theorems (for the details, see Section 4).

}
\end{remark}

\vskip0.3cm Under these hypotheses, as stated above, we are able to prove that the solution to the initial-bounday-value problem \eqref{ForcBurg} experiences a metastable behavior. Precisely, we  describe the slow motion of the solution  by describing the behavior of the perturbation $v$ and of the interface location $\xi(t)$; at first, we consider a simplified partial differential equation for the perturbation $v$, where the higher order terms arising from the linearization are canceled out.

Our first main contribution is the following

\begin{theorem}\label{teointro}

Let $u(x,t)=v(x,t)+ U^{\varepsilon}(x;\xi(t))$ be the solution of the initial-boundary value problem \eqref{ForcBurg}. Let assume that hypotheses {\bf H1-H2} are satisfied. Hence, for $\varepsilon$
  sufficiently small, there exists a time $T^{\varepsilon}$ of order $e^{1/\varepsilon}$ such that, for $t \leq T^\varepsilon$ the following bounds hold
\begin{equation*}
|v|_{{}_{L^2}}\,
		\leq c_1 |v_0|^2_{{}_{L^2}} e^{-c t} + c_2\,|\Omega^\varepsilon|_{{}_{L^\infty}}, \quad {\rm and} \quad |\xi(t) - \bar \xi | \leq |\xi_0| e^{-\beta^\varepsilon t}, \quad \beta^\varepsilon \to 0 \ {\rm as} \ \varepsilon \to 0.
\end{equation*}
\end{theorem}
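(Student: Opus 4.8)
The plan is to substitute the ansatz $u(x,t)=U^\e(x;\xi(t))+v(x,t)$ into \eqref{ForcBurg} and to split the resulting evolution into a scalar ODE for the interface position $\xi$ and an infinite dimensional equation for the perturbation $v$, following the projection strategy of Carr and Pego recalled above. Writing the nonlinear operator as $\mathcal P^\e[U^\e+v]=\mathcal P^\e[U^\e]+\mathcal L^\e_\xi v+\mathcal Q^\e[v]$, where $\mathcal L^\e_\xi$ is the linearization about $U^\e(\cdot;\xi)$ and $\mathcal Q^\e$ gathers the genuinely nonlinear remainder, and using $\d_t U^\e=\dot\xi\,\d_\xi U^\e$, the equation takes the form
\begin{equation*}
\d_t v=\mathcal L^\e_\xi v+\mathcal P^\e[U^\e]-\dot\xi\,\d_\xi U^\e+\mathcal Q^\e[v].
\end{equation*}
To fix the decomposition I would determine $\xi(t)$ implicitly through the constraint $\langle \psi^\e_1(\cdot;\xi),v\rangle=0$ for all $t$, that is, by requiring the perturbation to carry no component along the slow mode $\varphi^\e_1$; this is exactly the purpose of the normalization prepared in \textbf{H2 ii}.

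For the ODE I would take the scalar product of the $v$-equation with $\psi^\e_1$. Since $\langle \psi^\e_1,\mathcal L^\e_\xi v\rangle=\l^\e_1\langle \psi^\e_1,v\rangle=0$ by the constraint, and $\langle \psi^\e_1,\d_\xi U^\e\rangle=1$ by \textbf{H2 ii}, differentiating the constraint in time (which produces the term $\dot\xi\,\langle \d_\xi\psi^\e_1,v\rangle$) yields, after rearranging,
\begin{equation*}
\dot\xi=\frac{\langle \psi^\e_1,\mathcal P^\e[U^\e]\rangle+\langle \psi^\e_1,\mathcal Q^\e[v]\rangle}{1-\langle \d_\xi\psi^\e_1,v\rangle}.
\end{equation*}
Here \textbf{H1} enters: by \textbf{iii}--\textbf{iv} the dominant numerator term satisfies $|\langle \psi^\e_1,\mathcal P^\e[U^\e]\rangle|\le\Omega^\e(\xi)\,|\psi^\e_1|_{L^\infty}\le\omega^\e(\xi)\,|\xi-\bar\xi|\,|\psi^\e_1|_{L^\infty}$, while for $v$ small the denominator stays near $1$. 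Combining this magnitude bound with the attractivity of the stable equilibrium $\bar\xi$ of \textbf{H1 i}, which fixes the sign of the reduced vector field so that it points towards $\bar\xi$, one reaches a differential inequality of the form $\frac{d}{dt}|\xi-\bar\xi|^2\le-\beta^\e\,|\xi-\bar\xi|^2$ with $\beta^\e=O(\omega^\e)\to0$; integrating it gives the stated slow relaxation $|\xi(t)-\bar\xi|\le|\xi_0|\,e^{-\beta^\e t}$.

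For the equation on $v$ I would expand $v=\sum_{k\ge2}v_k\varphi^\e_k$, $v_k=\langle \psi^\e_k,v\rangle$ (with no $k=1$ term by the constraint), and run an energy estimate. Projecting on $\psi^\e_k$ and differentiating in time gives $\dot v_k=\l^\e_k v_k+\langle \psi^\e_k,\mathcal P^\e[U^\e]\rangle-\dot\xi\,\langle \psi^\e_k,\d_\xi U^\e\rangle+\dot\xi\,\langle \d_\xi\psi^\e_k,v\rangle+\langle \psi^\e_k,\mathcal Q^\e[v]\rangle$; at this stage I pass to the simplified problem by discarding $\mathcal Q^\e[v]$. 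Multiplying by $v_k$ and summing over $k\ge2$, the spectral gap of \textbf{H2 i} (which forces $\l^\e_k\le-c<0$ for all $k\ge2$ once $\e$ is small) provides the dissipation $-c\,|v|^2_{L^2}$; the inhomogeneity is controlled via \textbf{H1 iii} by $c_2\,|\Omega^\e|_{L^\infty}$; and the coupling terms carrying $\dot\xi$ are absorbed using the smallness of $\dot\xi$ together with the summability bound $\sum_j\langle \d_\xi\psi^\e_k,\varphi^\e_j\rangle^2\le C$ of \textbf{H2 ii}. A Gronwall argument then delivers $|v|_{L^2}\le c_1\,|v_0|^2_{L^2}\,e^{-ct}+c_2\,|\Omega^\e|_{L^\infty}$, the constant floor reflecting that $U^\e$ is only an approximate steady state, so that $\mathcal P^\e[U^\e]\neq0$.

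Finally the two estimates must be closed simultaneously, since the bound on $v$ presupposes $\xi\in I$ and $v$ small (to keep the denominator of $\dot\xi$ near $1$ and the linearization legitimate), while the size of $\dot\xi$ feeds back into the coupling terms of the $v$-equation. Defining $T^\e$ as the maximal time on which $|v|_{L^2}$ remains below a fixed threshold and $\xi$ stays in $I$, the a priori estimates show that $v$ is trapped at size $O(|\Omega^\e|_{L^\infty})$ while $\xi$ drifts with speed $O(\beta^\e)$, so that an $O(1)$ displacement of $\xi$ costs a time $O(1/\beta^\e)$, which in the regime at hand is of order $e^{1/\e}$; hence the window closes at $T^\e\sim e^{1/\e}$. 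I expect the principal difficulty to lie precisely in this closure: bounding the $\dot\xi$-dependent coupling terms in the energy estimate (where \textbf{H2 ii} is indispensable) and securing the equivalence between $|v|_{L^2}$ and the $\ell^2$ norm of the coefficients $\{v_k\}$, since $\{\varphi^\e_k\}$ and $\{\psi^\e_k\}$ are only biorthogonal, not orthonormal.
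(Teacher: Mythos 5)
Your skeleton coincides with the paper's up to the point where the perturbation is estimated: same ansatz, same orthogonality constraint $\langle\psi^\varepsilon_1,v\rangle=0$ maintained in time, same scalar equation for $\dot\xi$ with denominator $1-\langle\partial_\xi\psi^\varepsilon_1,v\rangle$, and your relaxation estimate for $\xi$ is exactly the separation-of-variables argument of Proposition \ref{prop:slowmotion}. One caveat there: the paper does \emph{not} deduce the sign of the reduced vector field from {\bf H1 i}; the conditions $\theta^\varepsilon(\xi)<0$ and ${\theta^\varepsilon}'(\bar\xi)<0$ are additional hypotheses of Proposition \ref{prop:slowmotion}, so your claim that stability of $U^\varepsilon(\cdot;\bar\xi)$ ``fixes the sign'' is an assumption, not a deduction (the hypotheses {\bf H1 iii--iv} control only $|\theta^\varepsilon|$). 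Likewise, under {\bf H1--H2} alone the paper obtains $T^\varepsilon\sim C|\Lambda^\varepsilon_1|^{-1}$ from the bootstrap condition $4AB<1$, i.e.\ from the growth of the weight $E_1(0,s)=e^{\lambda^\varepsilon_1 s}$ (recall $\lambda^\varepsilon_1>0$) multiplying the quadratic terms; the order $e^{1/\varepsilon}$ is imported only later from the Burgers--Sivashinsky estimate $\lambda^\varepsilon_1\sim e^{-1/\varepsilon}$. Your ``an $O(1)$ displacement of $\xi$ costs $1/\beta^\varepsilon$'' heuristic identifies a different, and not the operative, mechanism for the closing of the time window.

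The genuine gap is in your energy/Gronwall treatment of the $v$-equation, and it goes beyond the norm-equivalence issue you flagged. After projecting, the forcing enters as $\sum_{k\geq2}v_k\,\langle\psi^\varepsilon_k,H^\varepsilon\rangle$, and hypothesis {\bf H1 iii} gives only the \emph{uniform-in-$k$} bound $|\langle\psi^\varepsilon_k,H^\varepsilon\rangle|\leq C\,\Omega^\varepsilon$; this is an $L^1$/measure-type bound on $\mathcal P^\varepsilon[U^\varepsilon]$ (in the matched construction of Section 5 it is literally a Dirac mass, so $H^\varepsilon\notin L^2$). Consequently $\sum_k\langle\psi^\varepsilon_k,H^\varepsilon\rangle^2$ is in general divergent, Cauchy--Schwarz against the $\ell^2$ norm of $\{v_k\}$ cannot close, and pairing the PDE directly with $v$ fares no better since $|\langle v,H^\varepsilon\rangle|\leq\Omega^\varepsilon\,|v|_{L^\infty}$ is not controlled by the $L^2$ energy. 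The paper's proof of Theorem \ref{thm:metaL} survives precisely because the dissipation is exploited \emph{mode by mode before summing}: the Duhamel representation gives $\int_0^t E_k(s,t)\,ds\leq1/|\Lambda^\varepsilon_k|\leq C/k^2$, and the series is summed via $\sum_{k\geq2}E_k(s,t)\leq C\,(t-s)^{-1/2}E_2(s,t)$, which is where the hypothesis $\lambda^\varepsilon_k\leq-Ck^2$ is actually consumed; a Gronwall inequality on the total energy discards exactly this $k$-dependent decay and picks up a divergent constant. So the variation-of-constants formula is not a stylistic alternative to your energy estimate but the step that makes the theorem provable: to repair your argument, replace the Gronwall scheme by the mode-wise integral representation, isolate $z$, and close the quadratic bootstrap $N\leq AN^2+B$ as in the paper, at which point both difficulties you anticipated (biorthogonality and the $\dot\xi$-coupling, handled there through \eqref{derpsiphi} and the antisymmetry relation $\langle\partial_\xi\psi^\varepsilon_j,\varphi^\varepsilon_k\rangle+\langle\psi^\varepsilon_j,\partial_\xi\varphi^\varepsilon_k\rangle=0$) are absorbed into constants depending on $|\psi^\varepsilon_k|_{L^\infty}$ rather than on a Riesz-basis property.
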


\begin{remark}  \rm{
Theorem \ref{teointro} states that the perturbation $v$ has a very fast decay in time up to a reminder that behaves like $ e^{-1/\varepsilon}$, so that the solution $u$ to \eqref{ForcBurg} is drifting to its equilibrium configuration at a speed rate dictated by $\beta^\varepsilon$; hence, the convergence towards the steady state is much slower a $\varepsilon$ becomes smaller.}

\end{remark}

Subsequently, we consider the complete system for the perturbation $v$, where also the higher order terms are taken into account. This leads to the second main contribution of this paper

\begin{theorem}\label{teointro2}
Let $u(x,t)=v(x,t)+ U^{\varepsilon}(x;\xi(t))$ be the solution of the initial-boundary value problem \eqref{ForcBurg}. Let assume that hypotheses {\bf H1-H2} are satisfied. Then, for $\varepsilon$
  sufficiently small, there exists a time $T^{\varepsilon}$ of order $1/\varepsilon^{\alpha}$ for some $\alpha \in (0,1) $ such that, for $t \leq T^\varepsilon$ the following bounds hold
\begin{equation*}
|v|_{{}_{H^1}}\,
		\leq c_1 |v_0|^2_{{}_{H^1}} e^{-c t} + c_2 \varepsilon^\delta, \quad {\rm and} \quad |\xi(t) - \bar \xi | \leq |\xi_0| e^{-\beta^\varepsilon t}, \quad -\beta^\varepsilon \to 0 \ {\rm as} \ \varepsilon \to 0,
\end{equation*}
where $\delta \in (0,1)$.

\end{theorem}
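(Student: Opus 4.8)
The plan is to reproduce the projection-method decomposition already employed for Theorem~\ref{teointro}, but now \emph{without} discarding the nonlinear remainder and closing the estimate in $H^1$ rather than in $L^2$. Writing $u(x,t)=U^\varepsilon(x;\xi(t))+v(x,t)$ and inserting this ansatz into \eqref{ForcBurg}, a Taylor expansion of the operator $\mathcal{P}^\varepsilon$ around $U^\varepsilon$ produces
\[
\partial_t v=\mathcal{L}^\varepsilon_\xi v+\mathcal{P}^\varepsilon[U^\varepsilon]+\mathcal{Q}^\varepsilon[v]-\dot\xi\,\partial_\xi U^\varepsilon,
\]
where $\mathcal{Q}^\varepsilon[v]$ gathers the quadratic and higher order contributions in $v$ which, in contrast to Theorem~\ref{teointro}, are now retained. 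To fix the decomposition I impose the constraint $\langle\psi^\varepsilon_1(\cdot;\xi),v\rangle=0$ for every $t$; differentiating it in time and using the normalization $\langle\psi^\varepsilon_1,\partial_\xi U^\varepsilon\rangle=1$ together with $\mathcal{L}^{\varepsilon,*}_\xi\psi^\varepsilon_1=\lambda^\varepsilon_1\psi^\varepsilon_1$ gives the rigorous ODE
\[
\dot\xi=\frac{\langle\psi^\varepsilon_1,\mathcal{P}^\varepsilon[U^\varepsilon]\rangle+\langle\psi^\varepsilon_1,\mathcal{Q}^\varepsilon[v]\rangle}{1-\langle\partial_\xi\psi^\varepsilon_1,v\rangle},
\]
coupled to the evolution of the coefficients $v_k=\langle\psi^\varepsilon_k,v\rangle$, $k\ge 2$, obtained by projecting the equation for $v$ onto each $\psi^\varepsilon_k$.

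The bound on the interface is then obtained exactly as in Theorem~\ref{teointro}. By hypothesis {\bf H1}(iii)--(iv) the leading numerator above is controlled by $\Omega^\varepsilon(\xi)\le\omega^\varepsilon(\xi)\,|\xi-\bar\xi|$, whereas the nonlinear term $\langle\psi^\varepsilon_1,\mathcal{Q}^\varepsilon[v]\rangle$ and the denominator correction are harmless so long as $v$ remains small; a Gr\"onwall argument yields $|\xi(t)-\bar\xi|\le|\xi_0|\,e^{-\beta^\varepsilon t}$ with $\beta^\varepsilon\sim\omega^\varepsilon\to 0$ as $\varepsilon\to 0$, which is the announced slow motion.

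The heart of the matter is the $H^1$ estimate for $v$. Expanding $v=\sum_{k\ge 2}v_k\varphi^\varepsilon_k$, hypothesis {\bf H2}(i) supplies the fast decay $\lambda^\varepsilon_k\le -Ck^2$ of every mode, hypothesis {\bf H2}(ii) bounds the spurious terms $\dot\xi\,\langle\partial_\xi\psi^\varepsilon_k,\varphi^\varepsilon_j\rangle$ created by the $\xi$-dependence of the spectral frame, and the forcing $\langle\psi^\varepsilon_k,\mathcal{P}^\varepsilon[U^\varepsilon]\rangle$ is again dominated by $\Omega^\varepsilon$. The genuinely new feature is the nonlinear remainder: for the convective part $-\partial_x f(u)$ the quadratic contribution to $\mathcal{Q}^\varepsilon[v]$ is of the form $\partial_x(f''(U^\varepsilon)\,v^2)$, which loses one derivative and therefore cannot be absorbed at the $L^2$ level. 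Differentiating the equation once and testing against $\partial_x v$, the offending term reduces after integration by parts to the cubic expression $\int_0^\ell(\partial_x v)^3\,dx$; estimating it by $|\partial_x v|_{L^\infty}\,|\partial_x v|^2_{L^2}$ and Gagliardo--Nirenberg, it is absorbed into the dissipation $\varepsilon\,|\partial_x^2 v|^2_{L^2}$ at the price of a negative power of $\varepsilon$, provided $|v|_{H^1}$ is small relative to a suitable power of $\varepsilon$.

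Finally I close the argument by a continuation (bootstrap) scheme: assuming $|v(t)|_{H^1}\le M\varepsilon^\delta$ on a maximal interval $[0,T^\varepsilon]$, the resulting energy inequality combined with the ODE bound on $\dot\xi$ gives $|v(t)|_{H^1}\le c_1|v_0|^2_{H^1}e^{-ct}+c_2\varepsilon^\delta$, strictly improving the a priori bound and hence extending the interval. Keeping the nonlinearity has exactly the two effects that distinguish this result from Theorem~\ref{teointro}: the negative powers of $\varepsilon$ generated in the absorption step force the residual floor to be only polynomially small, $c_2\varepsilon^\delta$ with $\delta\in(0,1)$, rather than of order $|\Omega^\varepsilon|_{L^\infty}$, and the accumulated nonlinear error stays below the threshold $\varepsilon^\delta$ only up to $t\sim\varepsilon^{-\alpha}$ with $\alpha\in(0,1)$, in place of the exponentially long $e^{1/\varepsilon}$. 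The main obstacle is precisely this $H^1$ control of the derivative-losing convective nonlinearity: since the diffusion coefficient is the small parameter $\varepsilon$, the dissipation available to absorb the cubic term is weak, and tracking its $\varepsilon$-dependence through the interpolation is what simultaneously dictates the admissible perturbation size $\varepsilon^\delta$ and the length $\varepsilon^{-\alpha}$ of the validity interval.
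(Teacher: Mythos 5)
Your overall frame (the ansatz $u=U^\varepsilon+v$, the orthogonality constraint $\langle\psi^\varepsilon_1,v\rangle=0$, the resulting rigorous ODE for $\xi$, and the decoupling giving $|\xi-\bar\xi|\le|\xi_0|e^{-\beta^\varepsilon t}$) matches the paper, and your treatment of the interface is essentially Proposition \ref{prop:slowmotion} adapted to the full system. The genuine gap is in your $H^1$ estimate for $v$: you propose a direct energy method (differentiate once, test against $\partial_x v$, absorb the cubic term $\int(\partial_x v)^3$ into the dissipation $\varepsilon|\partial_x^2 v|^2_{L^2}$ via Gagliardo--Nirenberg). But for this problem the plain energy method cannot produce the decay $e^{-ct}$ your bootstrap needs, because $\mathcal{L}^\varepsilon_\xi$ is strongly non-normal. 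A direct computation gives
\[
\langle \mathcal{L}^\varepsilon_\xi v, v\rangle_{L^2}
=-\varepsilon|\partial_x v|^2_{L^2}
+\int_0^\ell f''(U^\varepsilon)\Bigl(1-\tfrac12\,\partial_x U^\varepsilon\Bigr)v^2\,dx,
\]
and since $\partial_x U^\varepsilon\approx 1$ away from the layers (and is large and negative inside them) while $f''>0$, the zeroth-order contribution is \emph{positive} of size $O(1)$ on most of the interval. The negativity of the spectrum ($\lambda^\varepsilon_k\le-Ck^2$, and in the application $\Lambda_2^\varepsilon\sim -\varepsilon^{-1/2}$) becomes visible only after the conjugation $v\mapsto\exp\bigl(-\frac{1}{2\varepsilon}\int f'(U^\varepsilon)\bigr)v$ that symmetrizes the operator, and that weight is of size $e^{O(1)/\varepsilon}$, so transferring weighted decay back to the unweighted $H^1$ norm destroys all constants. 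Concretely, your energy inequality yields at best $\frac{d}{dt}|v|^2_{H^1}\le C|v|^2_{H^1}+\dots$, i.e.\ $e^{Ct}$ growth, which is catastrophic on the claimed interval $t\sim\varepsilon^{-\alpha}$; the "strict improvement" step of your continuation argument cannot close. The cubic term you single out as the heart of the matter is in fact the lesser obstacle.

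The paper avoids this entirely by keeping the derivative inside the spectral frame: it sets $y=\partial_x v$, expands $y=\sum_j y_j\varphi^\varepsilon_j$ in the same eigenbasis, writes a Duhamel representation with the factors $E_k(s,t)=\exp\bigl(\int_s^t\lambda^\varepsilon_k(\xi(\sigma))\,d\sigma\bigr)$, extracts decay from the tail bound $\sum_{k\ge2}E_k(s,t)\le C(t-s)^{-1/2}E_2(s,t)$, and handles the commutator term through the splitting $|\langle\psi^\varepsilon_k,\partial_x(\partial_x U^\varepsilon\,v)\rangle|\le\varepsilon^m|U^\varepsilon|^2_{L^\infty}+\varepsilon^{-m}|v|^2_{H^1}$; the quadratic inequality $N\le AN^2+B$ for the weighted norm $N(t)=\varepsilon^{-n}\sup_{s\le t}|v-z|_{H^1}E_1(s,0)$ is then closed under the additional assumption $|\Lambda_2^\varepsilon|\sim\varepsilon^{-\gamma}$, with the choice $m=\gamma-\delta$. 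This also shows that your diagnosis of the sources of $\varepsilon^\delta$ and $T^\varepsilon\sim\varepsilon^{-\alpha}$ is off: in the paper they are dictated by the exponents $m,n$ in this Young-type splitting relative to $\gamma$, not by interpolating the cubic term against the $\varepsilon$-dissipation. To repair your argument you would have to either work in the conjugated self-adjoint variables and accept the $e^{O(1)/\varepsilon}$ loss, or return to the eigenfunction expansion of the differentiated equation as in the paper's proof of Theorem \ref{thm:metaNL}.
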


\begin{remark} \rm{
As we will see in details in the next sections, the nonlinear terms in the equation for the perturbation $v$ depend also on the first space derivative, so that, in order to prove Theorem \ref{teointro2}, an additional bound for the $H^1$-norm of $v$ is needed.  This is the reason why the final bound for $v$ is weaker than the corresponding formula  stated in Theorem \ref{teointro}. Also, the final time $T^\varepsilon$ si diverging to infinity as $\varepsilon^{-\alpha}$, $\alpha \in (0,1)$, rather than $e^{1/\varepsilon}$.  }
\end{remark}

As a direct consequence of Theorem \ref{teointro} and \ref{teointro2}, we can state the following Corollary concerning the solution to the initial-boundary-value problem \eqref{ForcBurg}.

\begin{corollary}
Let $u(x,t)$ be the solution to \eqref{ForcBurg}, with initial datum $u_0$ on the form \eqref{u0meta}. If $a_0 \in (0,\ell/2)$, then $u(x,t)$ converges to $U_{\e,+}(x)$ for $t\to +\infty$.
Conversely, if $a_0 \in (\ell/2,\ell)$, then $u(x,t)$ converges to $U_{\e,-}(x)$ for $t\to +\infty$. In both cases,  the speed rate of convergence is given by $\beta^\varepsilon$, and the time of convergence is proportional to $T^\varepsilon$.

\end{corollary}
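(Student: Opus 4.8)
The plan is to read off the Corollary from Theorems \ref{teointro} and \ref{teointro2}; the only genuinely new ingredients are the identification of the limiting interface position $\bar\xi$ in terms of $a_0$ and a continuation argument that upgrades the finite-time metastable estimates to convergence as $t\to+\infty$. I would begin from the decomposition $u(x,t)=U^\e(x;\xi(t))+v(x,t)$ underlying both theorems and first translate the datum \eqref{u0meta} into initial data for the reduced $(\xi,v)$ system. By \eqref{u0meta} the profile $u_0$ changes sign exactly once, at $x=a_0$; after the $\mathcal O(1)$ formation phase (rigorously established in \cite{BerKamSiv95,BerKamSiv01} for $f(u)=u^2/2$, and producing for general $f$ a layered profile lying in the family $\{U^\e(\cdot;\xi)\}$) the solution has a single interior zero close to $a_0$, so that $\xi_0:=\xi(0)$ is close to $a_0$ and $v_0$ is the associated small remainder. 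In particular $\xi_0$ lies on the same side of $\ell/2$ as $a_0$.

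Next I would identify the reduced dynamics with its equilibria. By the description of the stationary solutions recalled in the first Theorem, the symmetric unstable state $U^\e_{{}_{M}}$ has its zero tending to $\ell/2$, while the two stable states $U_{\e,+}$ and $U_{\e,-}$ have their zeros tending to the left wall $x=0$ and the right wall $x=\ell$ respectively; in the parametrization of the family by the interface location these correspond to $\bar\xi=0$ and $\bar\xi=\ell$. Thus $\xi=\ell/2$ is the unstable watershed separating the two basins — a picture consistent with the symmetry $f'(-u)=-f'(u)$ — and an interface born at $\xi_0\approx a_0$ drifts monotonically to the \emph{nearer} wall. This fixes the value of $\bar\xi$ to be used in hypothesis \textbf{H1 i)} and in Theorems \ref{teointro}--\ref{teointro2}: if $a_0\in(0,\ell/2)$ then $\bar\xi=0$ and the relevant stable state is $U_{\e,+}$, while if $a_0\in(\ell/2,\ell)$ then $\bar\xi=\ell$ and the relevant stable state is $U_{\e,-}$.

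With $\bar\xi$ so chosen, the estimates of Theorem \ref{teointro} (or \ref{teointro2}) give, for $t\le T^\e$, that $|\xi(t)-\bar\xi|\le|\xi_0-\bar\xi|\,e^{-\beta^\e t}$ and that $|v(\cdot,t)|$ decays exponentially fast down to a remainder of size $\mathcal O(|\Omega^\e|_{{}_{L^\infty}})$ (respectively $\mathcal O(\e^\delta)$), which is exponentially small (respectively a positive power of $\e$) by \textbf{H1 iii)}. Hence within the metastable window the solution $u(\cdot,t)=U^\e(\cdot;\xi(t))+v(\cdot,t)$ enters an arbitrarily small neighbourhood of $U^\e(\cdot;\bar\xi)$, which by \textbf{H1 i)} is the stable steady state, i.e. $U_{\e,+}$ or $U_{\e,-}$; the drift proceeds at the slow rate $\beta^\e\to0$ and over a time proportional to $T^\e$, as claimed.

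The step I expect to be the main obstacle is precisely the passage from these finite-horizon bounds ($t\le T^\e$) to the $t\to+\infty$ statement of the Corollary, since $\beta^\e T^\e$ need not be large and the interface may not have reached the wall by time $T^\e$. To close the gap I would run a continuation argument: the estimates above ensure that at some $t_\star\le T^\e$ the solution lies within a fixed radius $r_\e$ of the exact stable steady state $U_{\e,\pm}$, the point being that the residual size of $v$ is below $r_\e$ thanks to the exponential smallness of $\Omega^\e$, while $\xi$ has been driven strictly inside the correct basin. Because $U_{\e,\pm}$ is linearly — hence locally asymptotically — stable, the ball of radius $r_\e$ is contained in its basin of attraction, and standard local stability theory then yields $u(\cdot,t)\to U_{\e,\pm}$ as $t\to+\infty$. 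The delicate bookkeeping is to verify that $r_\e$, the residual norm of $v$, and $|\xi(t_\star)-\bar\xi|$ are mutually compatible uniformly as $\e\to0$; this is where the quantitative content of \textbf{H1} and \textbf{H2} is genuinely used.
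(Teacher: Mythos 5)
Your proposal is correct, and it follows the route the paper itself intends: the paper gives no separate proof, presenting the Corollary as a ``direct consequence'' of Theorems \ref{teointro} and \ref{teointro2} (i.e.\ of the bound $|\xi(t)-\bar\xi|\le|\xi_0|e^{-\beta^\e t}$ and the decay of $v$ up to a remainder of size $|\Omega^\e|_{{}_{L^\infty}}$, resp.\ $\e^\delta$), combined with the symmetry identification of $\bar\xi$ that you also make ($\bar\xi=0$, hence $U_{\e,+}$, when $a_0<\ell/2$; $\bar\xi=\ell$, hence $U_{\e,-}$, when $a_0>\ell/2$, via the reflection $x\mapsto\ell-x$, $u\mapsto-u$ permitted by $f'(-u)=-f'(u)$). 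Where you genuinely go beyond the paper is in filling two steps it leaves implicit: (i) the $\mathcal O(1)$ formation phase converting a datum of the form \eqref{u0meta} into the ansatz $u=U^\e(\cdot;\xi(t))+v$ with $\xi_0$ near $a_0$ and $v_0$ small, which the theorems tacitly presuppose through \eqref{Idata} and the smallness requirement on $v_0$; and (ii) the passage from the finite horizon $t\le T^\e$ of both theorems to the $t\to+\infty$ claim, which you close by a continuation argument resting on the local asymptotic stability of the exact states $U_{\e,\pm}$ (Proposition \ref{prop:stab}, \cite[Theorem 6.4]{BerKamSiv01}); this is a real lacuna in the ``direct consequence'' phrasing, and your patch--checking that the residual $c_2|\Omega^\e|_{{}_{L^\infty}}$ (or $c_2\e^\delta$) falls below the attraction radius of $U_{\e,\pm}$ uniformly as $\e\to0$--is the natural one. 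One caution you should make explicit: the asymptotic computation of Section \ref{Sec:example} yields $\theta^\e(\xi)\sim-\e\ell\xi<0$ on all of $(0,\ell)$, which by itself would drive \emph{every} interface to $\xi=0$; the case $a_0\in(\ell/2,\ell)$ therefore cannot be read off that formula and must be obtained through the reflection symmetry, as your ``nearer wall'' picture implicitly, but not explicitly, does.
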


\vskip0.1cm
The main difference with respect to the other papers that have considered the problem of metastability for equation \eqref{ForcBurg} is that here we deal with a generic flux function $f$ that satisfies hypotheses \eqref{ipof}; also, we develop a general theory to rigorously prove the slow motion of the internal interfaces, that could be also applicable to other types of convection-reaction-diffusion equations and, hopefully, to the case of systems, provided that the assumptions {\bf H1-H2} are satisfied. As we shall see in details in Section 4, the proofs of Theorem \ref{teointro} and \ref{teointro2} can be extended to the case of an unknown $v \in [L^2(I)]^n$, $n\geq 2$, with only minor changes (see also \cite[Theorem 2.1]{MS}). In this direction, we quote here the recent contributions \cite{MS2} and \cite{Str12} , where the the isentropic Navier-Stokes system and hyperbolic-parabolic Jin-Xin system have been considered.  In principle, when rigorous results are not achievable, it could be possible to obtain numerical evidence of the spectrum of the linearized operator.

Moreover, in this paper we are able to give explicit expressions for the speed and for the size of the interface location $\xi$, as well as for the time of convergence of such interface towards its equilibrium configuration; this is a direct consequence of the strategy we used, that is the description of the solution to \eqref{ForcBurg} as the sum of two functions, each of them satisfying an explicit equation. As a consequence, the two phases of the dynamics are explicitly described and separated.

\vskip0.5cm
We close this introduction with an overview of the paper. 

In section 2 we prove the existence of four different types of stationary solutions for the equation \eqref{ForcBurg}, and we discuss the stability properties of these steady states. 

In section 3 we develop a general approach to describe the dynamics of solutions belonging to a neighborhood of a one-parameter family $\{ U^\e(x;\xi) \}_{\xi \in I}$ of approximate steady states, where we use as coordinates the parameter $\xi$, describing the location of the internal interface, and the perturbation $v$, describing the distance between the solution $u$ and an element of the family. By linearizing the original equation around an element of the family, we end up with a coupled system for the variables $(\xi,v)$, whose analysis is performed in the subsequent Section 4. In particular we here deal with an approximation of the system, obtained by linearizing with respect to $v$ and by disregarding the $o(v) -$terms. 
Specifically, we state and prove Theorem \ref{thm:metaL}, providing, under appropriate assumptions on the spectrum of the linearized operator around $U^\e$ as well as on the behavior of $U^\e$ as $\e \to 0$, an explicit estimate for the perturbation $v$. Such estimate will be subsequently used to decoupled the system for $(\xi, v)$, in order to obtain a reduced equation for the interface location $\xi(t)$, analyzed in Proposition \ref{prop:slowmotion}. In particular, these results characterizing the couple $(\xi,v)$ give a good qualitative explanation of the transition from the metastable state to the finale stable state.

 Last part of this section is devoted to the analysis of the complete system for the couple $(\xi, v)$, where also the higher order terms in $v$ are considered: the main contribution of this section is Theorem \ref{thm:metaNL}, where we prove an estimate for the difference $|(v-z)(t)|_{{}_{H^1}}$,where $z$ is a function with a very fast decay in time. 
This result, together with Theorem \ref{thm:metaL}, makes the theory complete.

Finally, in section 5, we consider, as an example, the Burgers-Sivashinsky equation: in this  case we are able to provide an explicit expression for the approximated family $\{ U^\e\}$. In order to apply the general theory developed in the previous sections, we give a measure on how far is an element of the family $\{ U^\e\}$ from being an exact steady state, as well as an explicit expression for the speed of convergence of the interface. It turns out that all these terms are small with respect to $\varepsilon$. Subsequently, we analyze spectral properties of the linear operator arising from the linearization around the approximate steady state $U^\e$, showing that the spectrum can be decomposed as follows: the first eigenvalue $\lambda_1^\e$ is positive and of order $e^{-1/\e}$; all the remaining eigenvalues $\{ \lambda_k^\e\}_{{}_{k\geq 2}}$ are negative and behave like $-C/\sqrt{\e}$. Such estimates will translate into a one-dimensional dynamics, since  all of the components of the perturbation relative of all the eigenvectors except the first one will have a very fast decay for $\varepsilon$ small, and in a slow motion for the interface as a consequence of the size estimate for the first eigenvalue. This analysis is needed to give evidence of the validity of the assumptions of Theorem \ref{thm:metaL} and Theorem \ref{thm:metaNL}.

\section{The stationary problem}

In this Section we deal with the stationary problem for \eqref{ForcBurg}, that is
\begin{equation}\label{ForcBurgStat}
 \left\{\begin{aligned}
		\varepsilon\,\partial_x^2u &= \partial_x f(u)- f'(u)\\
 		u(0)	& =u(\ell)=0	 \\
 	 \end{aligned}\right.
\end{equation}
for $x\in (0,\ell)$. This problem has been extensively studied in the case $f(u)=u^2/2$ in the work of  H. Berestycki, S. Kamin and G. Sivanshinsky \cite{BerKamSiv01}. Here the authors prove the existence and uniqueness of four type of solutions to \eqref{ForcBurgStat}: they prove that there exist a unique positive solution $U^+_\e$, a unique negative solution $U^-_\e$, and two other stationary solutions $U^+_{1,\e}$ and $U_{1,\e}^-$, which have one zero inside the interval (for more details, see \cite[Theorem 1]{BerKamSiv01}). Additionally, the authors prove that the solutions $U^{\pm}_\e$ are stable, while $U^\pm_{\e,1}$ are unstable with respect to perturbations of initial data (see \cite[Theorem 6.4]{BerKamSiv01}).

\vskip0.2cm
\subsection{Existence of stationary solutions} We here mean to use analogous techniques to those used in \cite{BerKamSiv01} in order to prove the existence of stationary solutions in the case of a generic flux function that satisfies hypotheses \eqref{ipof}. In particular we are interested in studying the existence of the stationary solution, named here $U^{\e}_{{}_{M}}(x)$, that gives rise to a metastable behavior.

Let us stress again that, in this case, $U^\e_{{}_{M}}(x)$ is said to be metastable because, starting from an initial datum located {\bf near} $U^\e_{{}_{M}}$, the solution drifts apart the unstable steady state towards one of the stable equilibrium configurations, and this motion is extremely slow. This behavior is different from other cases (see, for example, \cite{MS,Str12}) where the unique steady state is metastable in the sense that, starting from an initial configuration located {\bf far} from the equilibrium, the time-dependent solution starts to drifts in an exponentially long time towards the asymptotic limit.

To prove the existence of the metastable steady state, we first prove the existence of a positive steady state, named here $U_{\varepsilon,+}(x)$, and then we  deduce existence and properties of the steady state $U^\varepsilon_{{}_{M}}(x)$ by making use of symmetries and scalings in the problem.

Before stating our result, let us define the tools we shall use in the following.

\begin{ans}
A function $v \in H^1[0,\ell]$ is a subsolution (respectively a supersolution) to \eqref{ForcBurgStat} if $v(0), v(\ell) \leq 0$ (respectively $v(0),v(\ell) \geq 0$) and
\begin{equation*}
\int_0^\ell \left( \e v' \varphi' - f(v) \varphi ' -f'(v) \varphi \right) \, dx \leq 0 \quad ({\rm respectively} \ \geq0),
\end{equation*}
for all $\varphi \in C^1[0,\ell]$ such that $\varphi(0)=\varphi(\ell)=0$.

\end{ans}

\begin{remark}{\rm
In the following, we will prove existence of the solution starting from the existence of a subsolution and a supersolution and  by making use of a standard monotone iteration technique (see \cite{Evans, Sat72}).
}
\end{remark}

\begin{proposition}\label{propstazpos}
There exists a unique solution $U_{\e,+}(x)$ to  \eqref{ForcBurgStat}, that is positive in the interval $(0,\ell)$ and such that
\begin{equation*}
 U'_{\e,+} \leq 1, \quad  0< U_{\e,+} \leq x \quad {\rm and} \quad U''_{\e,+} \leq 0 \ \ {\rm for} \ \ x \in (0,\ell).
\end{equation*}

\end{proposition}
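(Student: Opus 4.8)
The plan is to follow the sub/supersolution route announced in the remark, rewriting the stationary equation \eqref{ForcBurgStat} in the quasilinear form
\[
\e u'' = f'(u)\,(u'-1),
\]
and then to read off the pointwise bounds directly from the structure of this ODE. The first observation is that both $u\equiv 0$ and $u(x)=x$ solve this equation exactly (using $f'(0)=0$ for the former), the function $u=x$ failing only the right boundary condition. Since $\e\cdot 0 = f'(x)(1-1)=0$ and $x(0),x(\ell)\ge 0$, the function $\bar u(x)=x$ is a supersolution in the sense of the Definition (with equality). For the lower barrier I would take $\underline u(x)=a\,\sin(\pi x/\ell)$ with $a>0$ small: then $\underline u(0)=\underline u(\ell)=0$, and since $\e\underline u''=-\e(\pi/\ell)^2\underline u$ while $f'(\underline u)(\underline u'-1)\approx -f''(0)\underline u$ to leading order in $a$ (because $f'(0)=0$ and $\underline u'-1<0$ for $a$ small), the subsolution inequality $\e\underline u''\ge f'(\underline u)(\underline u'-1)$ reduces to $\e(\pi/\ell)^2\le f''(0)$. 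This holds, with room to absorb the $O(a^2)$ corrections, precisely when $\e$ lies below the threshold $f''(0)(\ell/\pi)^2$; for such $\e$ and $a$ small one also has $\underline u\le \bar u$.

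With these barriers in hand, existence follows from the monotone iteration (\cite{Evans,Sat72}). Because the nonlinearity $F(u,u')=f'(u)(u'-1)$ depends on $u'$, I would run the iteration for the shifted operator $\e\partial_x^2-K\partial_x-M$, choosing $K,M$ large enough that $F(u,u')-Ku'-Mu$ is monotone on the order interval $[\underline u,\bar u]$; this is legitimate since $F$ is $C^1$ with bounded derivatives on the compact range $\{0\le u\le \ell\}$ once the admissible gradients are controlled, and the operator $\e\partial_x^2-K\partial_x-M$ ($M>0$) obeys the comparison principle. Iterating downward from $\bar u=x$ then produces a solution $U_{\e,+}$ of \eqref{ForcBurgStat} with $U_{\e,+}(0)=U_{\e,+}(\ell)=0$ and $0<\underline u\le U_{\e,+}\le x$; in particular $U_{\e,+}>0$ on $(0,\ell)$ and $U_{\e,+}\le x$, two of the asserted bounds.

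The remaining derivative bounds come out of a one-line ODE argument. Since $U:=U_{\e,+}\le x$ with equality at $x=0$, the function $U-x$ attains its maximum at the left endpoint, whence $U'(0)\le 1$. Set $w:=U'-1$. The equation gives $U''=\tfrac1\e f'(U)(U'-1)=\tfrac1\e f'(U)\,w$, and since $w'=U''$ this is the linear first-order ODE
\[
w' = \tfrac1\e f'(U)\,w, \qquad\text{so}\qquad w(x)=w(0)\,\exp\!\Big(\tfrac1\e\int_0^x f'(U(t))\,dt\Big).
\]
As $U\ge 0$ gives $f'(U)\ge 0$ (recall $f'(0)=0$ and $f''>0$), the exponential is positive, and $w(0)=U'(0)-1\le 0$, so $w\le 0$, i.e. $U'\le 1$ on $[0,\ell]$. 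Feeding this back, $U''=\tfrac1\e f'(U)(U'-1)\le 0$ because $f'(U)\ge0$ and $U'-1\le0$; hence $U$ is concave. (In fact $w(0)=0$ would force $w\equiv0$ and $U\equiv x$, contradicting $U(\ell)=0$, so $U'<1$ and $U''<0$ strictly inside.)

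The main obstacle is uniqueness of the positive solution. The naive route---subtracting two positive solutions $U_1,U_2$ and writing a linear equation for $U_1-U_2$---produces $\e(U_1-U_2)'' - f'(U_1)(U_1-U_2)' - f''(\theta)(U_2'-1)(U_1-U_2)=0$, whose zeroth-order coefficient $-f''(\theta)(U_2'-1)\ge 0$ (using the just-proved bound $U_2'\le 1$) has the destabilizing sign, so the maximum principle does not close the argument directly; this reflects the very stability of $U_{\e,+}$. I would instead establish uniqueness by a shooting/time-map argument: parametrize solutions with $U(0)=0$ by $s=U'(0)\in(0,1)$, observe from the phase portrait of $u'=p,\ \e p'=f'(u)(p-1)$ that for $s<1$ the slope $p$ decreases monotonically while $u>0$ (so the trajectory rises and returns to $\{u=0\}$), and show the return length $T(s)$ is strictly increasing from $\pi\sqrt{\e/f''(0)}$ (as $s\to0$, from the harmonic linearization at the origin) to $+\infty$ (as $s\to1$). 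The unique root of $T(s)=\ell$ then yields the unique positive solution and simultaneously recovers the threshold $\e<f''(0)(\ell/\pi)^2$ found above. Proving the monotonicity of this time map (equivalently, nondegeneracy of the linearization) is the step I expect to require the most care.
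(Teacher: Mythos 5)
Your proposal is correct and follows the same skeleton as the paper's proof --- supersolution $x$, subsolution $\alpha\sin(\pi x/\ell)$, monotone iteration for existence, and a shooting/time-map argument for uniqueness --- but several steps are executed differently, and in two places more carefully. For the subsolution, the paper bounds $Nv$ by $\varepsilon\alpha(\pi/\ell)^2+f'(v)\left[\alpha\pi/\ell-1\right]$ after discarding the sine factor in the first term; that estimate cannot be made nonpositive near the endpoints, where $f'(v)\to 0$ while $\varepsilon\alpha(\pi/\ell)^2$ stays constant. Your version, which keeps both terms proportional to $\sin(\pi x/\ell)$ and compares $\varepsilon(\pi/\ell)^2$ with $f''(0)$, is the correct repair, and it makes explicit the smallness threshold $\varepsilon<f''(0)(\ell/\pi)^2$ that the paper leaves implicit; likewise your remark that the iteration must be adapted because the nonlinearity depends on $u'$ is a genuine point that the paper passes over by citing \cite{Evans,Sat72}. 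For the derivative bounds your route is different and cleaner: the paper first rules out an interior minimum to conclude $u_+''<0$ and only then reads off $\partial_x u_+<1$ from the equation, whereas you integrate the linear ODE $w'=\varepsilon^{-1}f'(U)\,w$ for $w:=U'-1$, obtaining $U'\leq 1$ from $w(0)\leq 0$ and concavity as a corollary; this reverses the paper's logical order, yields the strict inequalities, and avoids the paper's rather quick inference from ``no interior minimum'' to ``$u''<0$ everywhere''. For uniqueness, your shooting argument at fixed $\varepsilon$ (slope $s=U'(0)\in(0,1)$, return length $T(s)$ strictly increasing) is the same mechanism as the paper's, which first rescales $u=\varepsilon^\beta v(x/\varepsilon^\beta)$ --- a normalization that removes $\varepsilon$ only for homogeneous fluxes such as $f(u)=u^\gamma/\gamma$ --- and then invokes strict monotonicity of the first zero $z(\alpha)$ of the shooting solution, deferring its proof to \cite{BerKamSiv01}. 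Thus the monotonicity of the time map, which you honestly flag as the delicate step, is precisely the step the paper itself does not prove either, so your proposal is complete to the same degree as the paper's own argument; moreover your $w$-ODE observation (if $s\geq 1$ then $w\geq 0$ everywhere, so the trajectory never returns to $u=0$) supplies the justification, left tacit in the paper, for restricting the shooting parameter to $(0,1)$.
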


\begin{proof}

Denoting by $Nu:= -\e \d_x^2 u+ f'(u)\d_x u-f'(u)$ , the function $v(x)=x$ is such that $N v \geq 0$, that is $v$ is a supersolution. On the other side, given $\alpha \in\R^+$, we consider the function $v(x)=\alpha \sin \left( \frac{\pi}{\ell} \, x\right)$. We get
\begin{equation}\label{subsol}
\begin{aligned}
N v &= \e \alpha \left(\frac{\pi}{\ell} \right)^2 \sin \left( \frac{\pi}{\ell} \, x\right)+ f'(v) \left[  \frac{\alpha\,\pi}{\ell}\cos \left( \frac{\pi}{\ell}\, x\right)-1\right] \\
& \leq \e \alpha \left(\frac{\pi}{\ell} \right)^2 + f'(v) \left[\frac{\alpha\,\pi}{\ell}-1\right].
\end{aligned}
\end{equation}
Since $f'(v)$ is positive inside the interval $(0, \ell)$,  if we denote by $m=\max\limits_{x \in[0,\ell]} f'(v(x))$, in order to have $N v \leq 0$,  we have to require $\frac{\alpha\,\pi}{\ell} \leq 1$. Hence, we can choose $\alpha=\alpha(m)$ such that \eqref{subsol} is nonpositive, that is $v$ is a subsolution to \eqref{ForcBurgStat} in the interval $(0,\ell)$. Finally, since $\frac{\alpha\,\pi}{\ell} \leq 1$, we have
\begin{equation*}
\alpha\sin \left( \frac{\pi}{\ell} \, x\right) \leq x,
\end{equation*}
so that there exists a positive solution $u_+(x)$ to \eqref{ForcBurgStat} in $(0,\ell)$. 

To prove the uniqueness of the positive steady state, we only give a sketch of the proof, since the computations are similar to the ones used in \cite[Section 4]{BerKamSiv01} in the case $f(u)=u^2/2$. The idea is to rescale the problem by performing the change of variable $u(x)={\e}^\beta\, v\!\left(\frac{x}{{\e}^\beta}\right)$, for some $\beta>0$ chosen such that we get the following equation
\begin{equation}\label{rescaled}\partial^2_{x}v- f'(v)\partial_x v+f'(v)=0, \quad v(0)=v({\e}^\beta\,\ell)=0.\end{equation}
For example, if $f(u)=u^\gamma/\gamma$, then $\beta=1/\gamma$.
Uniqueness for the solution to \eqref{ForcBurgStat} corresponds to uniqueness for the solution to \eqref{rescaled}. We then introduce the following initial value problem
\begin{equation}\label{IBValpha}
\d_x^2v- f'(v)\d_x v+f'(v)=0, \quad v(0)=0, \ \ v'(0)=\alpha,
\end{equation}
for some $\alpha \in (0,1)$ and one can see that the solution $v_{\alpha}$ to \eqref{IBValpha} has a first zero, denoted here by $x=z(\alpha)$,  such that $v_\alpha >0$ in $(0,z(\alpha))$ and $v_\alpha(0)=v_\alpha(z(\alpha))=0$. To prove uniqueness for \eqref{rescaled} it is then sufficient to prove that $z(\alpha)$ is strictly increasing with respect $\alpha$. For the proof of this statement and for further details, we refer to \cite{BerKamSiv01}, Proposition 4.2 and Appendix A.

For the proof of the second part of the Proposition, we know that $u_+ (x) \leq x$ for all $x \in (0,\ell)$, so that $u_+'(0) \leq 1$; moreover, since $u_+>0$ and $u^+(\ell)=0$, there follows  $u_+'(\ell) <0$. Hence, there exists a maximum $x_1 \in (0,\ell)$ for the function $u^+$.
Now let us suppose that there exists a value $x_2 > x_1$ such that $u_+(x_2)$ is an internal minimum for $u_+$. From the equation we have
\begin{equation*}
\e \d_x^2 u_+(x_2)= -f'(u_+(x_2)) <0,
\end{equation*}
which is impossible since $x_2$ is a minimum for $u$. Hence, $u''_+(x)$ is negative for all $x \in (0,\ell)$. Finally, from the equation
\begin{equation*}
0 >\e \d^2_x u_+ = f'(u_+) (\d_x u_+ -1),
\end{equation*}
that is, since $f'(u_+)>0$, $\d_x u_+ < 1$.

\end{proof}

Starting from the existence of the function $U_{\e,+}$, the following results concerning $U^\e_M$ can be proved.

\begin{proposition}\label{prop:exstat}
There exists a unique  $U^{\e}_{{}_{M}}(x)$, solution to \eqref{ForcBurgStat}, and there exists  $x_0 \in (0,\ell)$ such that $U^{\e}_{{}_{M}}(x_0)=0$ and
\begin{equation*}
 {U^{\e}_{{}_{M}}}(x) <0 \ \ {\rm for} \ \  x < x_0, \quad {U^{\e}_{{}_{M}}}(x) >0 \ \ {\rm for} \ \ x > x_0.
\end{equation*}
Moreover ${U^{\e}_{{}_{M}}}'(x) \leq 1$ for $x \in (0,\ell)$ and
\begin{equation*}
 {U^{\e}_{{}_{M}}}''(x) >0 \ \ {\rm for} \ \ x<x_0  \ \ {\rm and } \ \ {U^{\e}_{{}_{M}}}''(x) <0 \ \ {\rm for}\ \ x>x_0.
\end{equation*}
\end{proposition}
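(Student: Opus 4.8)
The plan is to build $U^\e_{{}_{M}}$ out of two reflected copies of the positive solution from Proposition~\ref{propstazpos}, exploiting two structural features of the stationary equation. Written as $\e u'' = f'(u)(u'-1)$, equation \eqref{ForcBurgStat} is \emph{autonomous}, hence invariant under $x$-translations. Moreover, since $f'$ is odd by \eqref{ipof}, the involution $u\mapsto \tilde u$ with $\tilde u(x):=-u(\ell-x)$ sends solutions to solutions while preserving the Dirichlet data: indeed $\tilde u'(x)=u'(\ell-x)$, $\tilde u''(x)=-u''(\ell-x)$, so that
\[
\e\tilde u''(x)=-\e u''(\ell-x)=-f'\!\big(u(\ell-x)\big)\big(u'(\ell-x)-1\big)=f'\!\big(\tilde u(x)\big)\big(\tilde u'(x)-1\big),
\]
where the last equality uses $f'(-w)=-f'(w)$. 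I would then let $P$ be the positive solution on the half-interval $[0,\ell/2]$ furnished by Proposition~\ref{propstazpos} (whose proof applies verbatim with $\ell/2$ in place of $\ell$, the subsolution being $\alpha\sin(2\pi x/\ell)$) and define
\[
U^\e_{{}_{M}}(x):=\begin{cases} -P(\ell/2-x), & x\in[0,\ell/2],\\[1mm] \ \ P(x-\ell/2), & x\in[\ell/2,\ell].\end{cases}
\]
By translation invariance the right branch solves \eqref{ForcBurgStat} on $(\ell/2,\ell)$, and by the reflection symmetry above the left branch solves it on $(0,\ell/2)$; both branches vanish at the endpoints, so the boundary conditions hold and $x_0=\ell/2$ is the unique interior zero.

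The key verification is smoothness across $x=\ell/2$. Continuity of $U^\e_{{}_{M}}$ and of its first derivative there is immediate, since both one-sided derivatives equal $P'(0)$. For the second derivative, the right branch gives $U''(\ell/2^+)=P''(0^+)$ whereas the reflected left branch gives $U''(\ell/2^-)=-P''(0^+)$; these agree precisely when the common value is $0$, and this is forced by evaluating the equation at the zero together with $f'(0)=0$, namely $\e U''(\ell/2)=f'(0)\big(U'(\ell/2)-1\big)=0$. Hence $U^\e_{{}_{M}}\in C^2$ is a genuine classical solution. The qualitative properties then transfer directly from those of $P$: one has $U^\e_{{}_{M}}<0$ on $(0,\ell/2)$ and $>0$ on $(\ell/2,\ell)$; the bound $(U^\e_{{}_{M}})'(x)=P'(\ell/2-x)\le 1$ on the left and $P'(x-\ell/2)\le 1$ on the right; and, using that $P''<0$ strictly inside (the strict conclusion of Proposition~\ref{propstazpos}), $(U^\e_{{}_{M}})''=-P''(\ell/2-\cdot)>0$ for $x<\ell/2$ and $P''(\cdot-\ell/2)<0$ for $x>\ell/2$.

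For uniqueness I would argue that any solution $U$ with a single interior zero $x_0$ that is negative on $(0,x_0)$ and positive on $(x_0,\ell)$ must coincide with $U^\e_{{}_{M}}$. On $(x_0,\ell)$ such a $U$ is, by the uniqueness part of Proposition~\ref{propstazpos} on $[x_0,\ell]$, the positive solution $P_{\ell-x_0}(\cdot-x_0)$, while on $(0,x_0)$ the reflection symmetry identifies it with the negative solution $-P_{x_0}(x_0-\cdot)$. The only remaining freedom is $x_0$, which is pinned by $C^1$-matching at the zero: writing $s(L):=P_L'(0)$ for the initial slope of the positive solution on $[0,L]$, the matching $U'(x_0^-)=U'(x_0^+)$ reads $s(x_0)=s(\ell-x_0)$. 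Thus everything reduces to the strict monotonicity of $s$, which forces $x_0=\ell/2$ and hence $U=U^\e_{{}_{M}}$.

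The one genuinely delicate point is precisely this monotonicity: the existence and the transfer of sign/convexity properties are routine once the reflect-and-glue construction is in place, but uniqueness hinges on $L\mapsto s(L)$ being strictly increasing. This is exactly the strict monotonicity of the first-zero map $\alpha\mapsto z(\alpha)$ invoked in the proof of Proposition~\ref{propstazpos}, read through the (fixed, for fixed $\e$) rescaling: inverting $z$ expresses the initial slope as a strictly increasing function of the interval length. I would therefore establish it by the same shooting/phase-plane comparison for $v''=f'(v)(v'-1)$ used there, or simply cite \cite{BerKamSiv01}, Proposition~4.2 and Appendix~A, adapted to the general flux $f$ satisfying \eqref{ipof}.
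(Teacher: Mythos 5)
Your proposal is correct and follows essentially the same route as the paper: the paper also builds $U^\e_{{}_{M}}$ by gluing the positive solution furnished by Proposition \ref{propstazpos} on $(x_0,\ell)$ with its odd reflection on $(0,x_0)$ (made explicit in the remark after the proof as $-\bar U^+(\ell/2-x;0,\ell/2)$ for $x<\ell/2$), using the symmetry \eqref{ipof}$_3$ to force $x_0=\ell/2$ and reducing uniqueness to that of $U_{\e,+}$, which itself rests on the shooting monotonicity cited from \cite{BerKamSiv01}. Your extra verifications — the $C^2$ matching at $\ell/2$ via $f'(0)=0$ and the slope-matching identity $s(x_0)=s(\ell-x_0)$ that pins down $x_0$ — merely spell out steps the paper leaves implicit.
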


\begin{remark} { \rm
Because of the assumption \eqref{ipof}$_3$ on the symmetry of the flux function $f$, it turns out that $x_0 \equiv \ell/2$. }
\end{remark}

\begin{proof}

 Let $x_0 \in (0,\ell)$, and let us consider the interval $(x_0, \ell)$. The proof of the statement follows by using the same arguments as in the proof of Proposition \ref{propstazpos}, and by choosing  $v_1(x)=x-x_0$ and  $v_2(x)=\alpha \sin \left( \frac{\pi}{\ell-x_0} (x-x_0)\right)$ as supersolution and subsolution respectively.

In particular, there exists a positive solution $u_+(x)$ to \eqref{ForcBurgStat} in $(x_0,\ell)$ such that $u_+(x_0)=0$. A symmetric argument can be used inside the interval $(0,x_0)$ to prove the existence of a negative solution $u_-(x)$, so that
\begin{equation}\label{defU}
{U^{\e}_{{}_{M}}}(x)=\left\{ \begin{aligned}
&u_- (x)\ \ {\rm for} \ \ x<x_0 \\
&u_+(x) \ \ {\rm for} \ \ x>x_0 \\ 
\end{aligned}\right.
\end{equation}
and $U^\varepsilon_{{}_{M}}(x_0)=0$. Because of the assumption $\eqref{ipof}_{3}$ on the symmetry of the flux function $f$, then $x_0 \equiv \ell/2$. In particular, the steady state $U^\e_{{}_{M}}$ is a $C^1$-matched function. The unicity follows immediately from the unicity of $U_{\e,+}$.

For the proof of the second part of the Proposition, it is enough to give a description of the positive solution $u_+(x)$ for $x \in (x_0,\ell)$. The same arguments can be used for the symmetric case of $u_-(x)$ in the interval $(0,x_0)$.

Again, the proof is identically to the one of Proposition \ref{propstazpos}, by considering the interval $(x_0,\ell)$ instead of $(0,\ell)$.

\end{proof}

\begin{remark}{\rm
The end of the proof of Proposition \ref{prop:exstat} is justified by the symmetry properties of the solutions to \eqref{ForcBurgStat}; indeed, if we consider the interval $(a,b) \subset (0,\ell)$, and if we define $\bar U^+(x;a,b)$  as the unique positive solution to
\begin{equation*}
\left\{\begin{aligned}
		\varepsilon\,\partial_x^2u &= \partial_x f(u)- f'(u)
		\\
 		u(a)	& =u(b)=0, \quad x\in (a,b),	\\
 	 \end{aligned}\right.
\end{equation*}
the solutions to \eqref{ForcBurgStat} can be defined starting from $\bar U^+(x;a,b)$. For example
\begin{equation*}
 U^\e_{{}_{M}}(x):=\left\{\begin{aligned}
-\bar U^+&(\ell/2-x;0,\ell/2)\ \ &{\rm for} \ \ &x\in(0,\ell/2)  \\ 
&\bar U^+(x;\ell/2,\ell) \ \ &{\rm for} \ \ &x \in (\ell/2,\ell).
 \end{aligned}\right.
\end{equation*}

}
\end{remark}

\vskip0.5cm

The following results characterizes the behavior of $U_{\e,+}$ and $U^\e_{{}_{M}}$ with respect to the parameter $\e$.
\begin{proposition}
The solution $U_{\e,+}(x)$ converges pointwise to the function  $x$ in $(0,\ell)$ when  $\e \to 0$.
\end{proposition}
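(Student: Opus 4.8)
The plan is to combine the one-sided bounds already established in Proposition \ref{propstazpos} with a compactness argument and an identification of the limit through the weak formulation. The heuristic is transparent: as $\e\to 0$ equation \eqref{ForcBurgStat} degenerates into $f'(u)(u'-1)=0$, and since $f'(u)>0$ wherever $u>0$ (because $f''>0$ and $f'(0)=0$ force $f'$ to be strictly increasing), the limit must satisfy $u'=1$ with $u(0)=0$, i.e. $u(x)=x$. From Proposition \ref{propstazpos} I already have, uniformly in $\e$, the two-sided information $0<U_{\e,+}(x)\le x$ together with the concavity $U''_{\e,+}\le 0$; moreover comparison with the subsolution $\alpha\sin(\pi x/\ell)$ built there gives a lower bound $U_{\e,+}(x)\ge \alpha\sin(\pi x/\ell)$ valid for a fixed $\alpha>0$ and all small $\e$ (the $\e$-term only reinforces the subsolution inequality, so once it holds for one value of $\e$ it holds for all smaller ones).

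First I would extract a limit. Concavity together with $0\le U_{\e,+}\le \ell$ yields a uniform Lipschitz estimate on every compact subinterval: for $x\in[\delta,\ell-\delta]$, monotonicity of $U'_{\e,+}$ and the secant bounds give $|U'_{\e,+}(x)|\le \ell/\delta$ independently of $\e$. Hence $\{U_{\e,+}\}$ is equibounded and equicontinuous on compact subsets of $(0,\ell)$, and Arzel\`a--Ascoli produces, along any sequence $\e_n\to 0$, a subsequence converging locally uniformly on $(0,\ell)$ to some concave limit $U^0$ with $\alpha\sin(\pi x/\ell)\le U^0(x)\le x$; in particular $U^0>0$ on $(0,\ell)$.

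Next I would identify $U^0$. Testing \eqref{ForcBurgStat} against $\varphi\in C^1_c(0,\ell)$ gives
\begin{equation*}
-\e\int_0^\ell U'_{\e,+}\varphi'\,dx+\int_0^\ell f(U_{\e,+})\varphi'\,dx+\int_0^\ell f'(U_{\e,+})\varphi\,dx=0.
\end{equation*}
On $\supp\varphi$ the derivative $U'_{\e,+}$ is bounded uniformly, so the first term is $O(\e)$ and vanishes; the other two pass to the limit by local uniform convergence and continuity of $f,f'$, yielding $\int f(U^0)\varphi'+\int f'(U^0)\varphi=0$ for all such $\varphi$, i.e. $f'(U^0)\big((U^0)'-1\big)=0$ a.e. Since $U^0>0$ we have $f'(U^0)>0$, hence $(U^0)'=1$ a.e.; integrating and using $0<U^0(x)\le x\to 0$ as $x\to 0^+$ forces $U^0(x)=x$ on $(0,\ell)$. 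Because the limit is independent of the chosen subsequence, the whole family converges and $U_{\e,+}(x)\to x$ pointwise (indeed locally uniformly) on $(0,\ell)$.

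The main obstacle is the boundary layer at $x=\ell$: there $U_{\e,+}$ must fall from values near $\ell$ back to $0$ across a layer of width $O(\e)$, so the Lipschitz bound degenerates and no convergence up to the endpoint can hold — this is precisely why the statement is only pointwise on the open interval and why the compactness argument must be localized away from $\ell$. The second delicate point is guaranteeing that the limit is strictly positive, so that one may legitimately divide by $f'(U^0)$; this is secured by the uniform subsolution lower bound, whose persistence as $\e\to 0$ must be checked.
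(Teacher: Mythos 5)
Your proof is correct, but it follows a genuinely different route from the paper's. The paper does not argue this proposition directly: it proves the analogous convergence for $U^\e_{{}_{M}}$ (Proposition \ref{convergenza}) and notes the other cases are minor modifications. Its mechanism is order-theoretic: the monotonicity in $\e$ of Lemma \ref{introconv} gives a pointwise monotone limit as $\e\to 0$, which is then squeezed between explicit barriers (the fixed sub/supersolution $x-\ell/2$, respectively $x$, on one side, and a hand-built piecewise family $h^{\l}_{a,c}$ with slope $\l$ on the other), letting $\l\to 1$ and the matching points tend to the endpoints. You instead extract the limit by compactness --- the interior Lipschitz bound $|U'_{\e,+}|\leq \ell/\delta$ on $[\delta,\ell-\delta]$, which you correctly derive from the concavity and the bounds $0<U_{\e,+}\leq x$ of Proposition \ref{propstazpos} --- and then identify it by passing to the limit in the weak formulation of \eqref{ForcBurgStat}, using that $U^0$ is locally Lipschitz so that the distributional identity $\bigl(f(U^0)\bigr)'=f'(U^0)$ yields $(U^0)'=1$ a.e., and that $U^0>0$ (secured by the subsolution $\alpha\sin(\pi x/\ell)$, whose validity uniformly in small $\e$ you rightly flag and which indeed holds because the only obstructing term in $N[\alpha\sin(\pi x/\ell)]$ is linear in $\e$ with positive coefficient --- note, incidentally, that this term \emph{opposes} the subsolution inequality, and it is its shrinking as $\e$ decreases that saves you, not a reinforcement as your parenthetical suggests). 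The trade-off: the paper's barrier argument is elementary, stays entirely within the comparison principle, and delivers monotone (hence, by Dini, locally uniform) convergence without ever passing to the limit in the equation; your argument requires the distributional identification and strict positivity of the limit, but it is more robust and systematic --- it needs no bespoke barrier construction and would transfer to settings where explicit sub/supersolutions like $h^{\l}_{a,c}$ are unavailable. Your closing remark about the boundary layer at $x=\ell$ correctly explains why the convergence is only claimed on the open interval in either approach.
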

\begin{proposition}\label{convergenza}
The solution $U^\e_{{}_{M}}(x)$ converges pointwise to the function $x-\ell/2$ in $(0,\ell)$ when $\e \to 0$.

\end{proposition}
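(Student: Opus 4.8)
The plan is to deduce the claim directly from the explicit gluing representation of $U^\e_{{}_{M}}$ recorded in the Remark following Proposition \ref{prop:exstat}, namely
\[
U^\e_{{}_{M}}(x)=
\begin{cases}
-\bar U^+(\ell/2-x;0,\ell/2) & x\in(0,\ell/2),\\
\bar U^+(x;\ell/2,\ell) & x\in(\ell/2,\ell),
\end{cases}
\]
together with the convergence of the positive solution to the identity established in the preceding Proposition. The first thing I would record is that the stationary equation is autonomous: since $\partial_x f(u)=f'(u)\,\d_x u$, equation \eqref{ForcBurgStat} reads $\e\,\d_x^2u=f'(u)(\d_x u-1)$, with no explicit dependence on $x$. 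It is therefore translation invariant, so that $\bar U^+(x;a,b)=\bar U^+(x-a;0,b-a)$, and the construction and the qualitative bounds of Proposition \ref{propstazpos} (positivity, $\bar U^+\le y$, first derivative $\le 1$, concavity) transfer verbatim to an interval of arbitrary length, with $y$ denoting the coordinate measured from the left endpoint.

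Next I would observe that the convergence proof of the preceding Proposition uses nothing about the specific value $\ell$ of the interval length. Hence, writing $V^{(L)}_\e(y):=\bar U^+(y;0,L)$ for the positive solution on $(0,L)$, the same sub/supersolution and boundary-layer argument yields $V^{(L)}_\e(y)\to y$ pointwise on $(0,L)$ as $\e\to 0$; applying this with $L=\ell/2$ is all that is needed.

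It then remains to substitute into the two branches. For $x\in(\ell/2,\ell)$, translation invariance gives $U^\e_{{}_{M}}(x)=V^{(\ell/2)}_\e(x-\ell/2)\to x-\ell/2$, since $x-\ell/2\in(0,\ell/2)$. For $x\in(0,\ell/2)$ the left branch is a reflected, sign-reversed copy, $U^\e_{{}_{M}}(x)=-V^{(\ell/2)}_\e(\ell/2-x)$, and because $\ell/2-x\in(0,\ell/2)$ we get $V^{(\ell/2)}_\e(\ell/2-x)\to \ell/2-x$, whence $U^\e_{{}_{M}}(x)\to-(\ell/2-x)=x-\ell/2$. At the matching point $U^\e_{{}_{M}}(\ell/2)=0=\ell/2-\ell/2$. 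This gives the pointwise limit $x-\ell/2$ on all of $(0,\ell)$.

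The main obstacle is the second step: one must make sure that the convergence statement of the preceding Proposition is genuinely length-independent and, in particular, that the reflection $x\mapsto \ell/2-x$ does not spoil the monotonicity and concavity estimates on the left branch. This reflection exchanges the two endpoints, turning the boundary layer that sits near the right endpoint of $(0,\ell/2)$ into a layer near $x=0$ for $U^\e_{{}_{M}}$; verifying that the sub/supersolution pair and the underlying analysis survive both this reflection and the change of interval length is the only genuinely non-formal point, everything else being the bookkeeping carried out above.
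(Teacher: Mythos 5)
Your reduction is clean and the bookkeeping is correct: since the stationary equation is autonomous, $\e\,\d_x^2u=f'(u)(\d_x u-1)$, translation invariance holds; the oddness of $f'$ is what makes the reflected branch $-\bar U^+(\ell/2-x;0,\ell/2)$ a solution; and the gluing representation then shows that Proposition \ref{convergenza} would follow at once from the pointwise convergence $V^{(L)}_\e(y)\to y$ of the positive solution on an interval of length $L=\ell/2$. Note also that the point you flag as the ``only genuinely non-formal'' one is actually harmless: once pointwise convergence of $V^{(\ell/2)}_\e$ is known, the left branch converges by pure substitution, and no monotonicity or concavity estimate needs to survive the reflection.

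The genuine gap is the step you delegate to ``the preceding Proposition.'' The paper states the convergence $U_{\e,+}\to x$ \emph{without proof} and says explicitly that it is obtained by a minor modification of the proof of Proposition \ref{convergenza} itself; so within the paper's logic your argument is circular, deriving the proposition from a statement the paper derives from it. Nor can that convergence be extracted from what is actually proved earlier: the existence argument of Proposition \ref{propstazpos} supplies only the supersolution $x$ and the subsolution $\alpha\sin(\pi x/\ell)$ under the constraint $\alpha\pi/\ell\le 1$, i.e.\ $\alpha\le\ell/\pi$, and this lower barrier stays bounded away from the identity, giving no convergence whatsoever. The analytic content of the paper's proof is precisely what your proposal omits: the $\e$-monotonicity of Lemma \ref{introconv} to obtain monotone limits, and above all the construction of the barrier family $h^{\l}_{a,c}$ (piecewise linear with slope $\l<1$, smoothly matched by the caps $\omega$ and $\theta$), which is a supersolution on $(0,\ell/2)$ and a subsolution on $(\ell/2,\ell)$ and squeezes $U^\e_{{}_{M}}$ against $x-\ell/2$ as $\l\to1$, $a\to 0$, $c\to\ell$. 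Your route would be viable --- and arguably tidier, since the symmetry reduction replaces the two-sided squeeze on $(0,\ell)$ by a one-sided one on a half-interval --- if you supplied the analogous family of subsolutions approaching the identity on $(0,L)$ (e.g.\ $\l y$ on $(0,c)$ glued to a smooth descent to zero at $y=L$); as written, the proposal defers all of the analysis to an unproved citation.
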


We only prove the convergence property of the metastable steady state $U^\e_{{}_{M}}$, the one we are interested the most. Again, a minor modification of the argument can be used in the other cases. In order to prove Proposition \ref{convergenza}, we need to state and prove the following Lemma.
\begin{lemma}\label{introconv}
Let $\e < \e_1$ and let $U_{\e,+}$ and $U_{\e_1,+ }$ the (unique) positive solutions to \eqref{ForcBurgStat} with $\e$ and $\e_1$ respectively. Then $U_{\e_1,+}(x) < U_{\e,+}(x)$ for all $x \in (0,\ell)$.
\end{lemma}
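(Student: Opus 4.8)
The plan is to exhibit the larger-viscosity profile $U_{\e_1,+}$ as a subsolution of the $\e$-problem and then trap $U_{\e,+}$ between it and the explicit supersolution $v(x)=x$ used in Proposition \ref{propstazpos}. Writing $N_\e u := -\e\,\d_x^2 u + f'(u)\,\d_x u - f'(u)$ for the $\e$-operator, I first check that $V:=U_{\e_1,+}$ is a subsolution of \eqref{ForcBurgStat} with viscosity $\e$. Since $V$ solves $\e_1\,\d_x^2 V = f'(V)(\d_x V-1)$, substituting $\d_x^2 V=f'(V)(\d_x V-1)/\e_1$ gives
\begin{equation*}
N_\e V = f'(V)\,\bigl(\d_x V-1\bigr)\Bigl(1-\frac{\e}{\e_1}\Bigr).
\end{equation*}
By Proposition \ref{propstazpos} applied with parameter $\e_1$ one has $V>0$ and $\d_x V<1$ in $(0,\ell)$, hence $f'(V)>0$ (as $f'$ is increasing with $f'(0)=0$) and $\d_x V-1<0$; together with $1-\e/\e_1>0$ this yields $N_\e V<0$ in $(0,\ell)$, while $V(0)=V(\ell)=0$. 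Thus $V$ is a strict subsolution of the $\e$-problem.

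Next, the function $x\mapsto x$ is a supersolution of the $\e$-problem, and $V=U_{\e_1,+}\le x$ again by Proposition \ref{propstazpos}. Running the monotone iteration scheme (see the Remark after the Definition, and \cite{Evans,Sat72}) between the ordered pair $V\le x$ produces a solution of \eqref{ForcBurgStat} lying in $[V,x]$; being positive in $(0,\ell)$ with zero boundary data, by the uniqueness part of Proposition \ref{propstazpos} it must coincide with $U_{\e,+}$, whence $U_{\e_1,+}\le U_{\e,+}$. To upgrade this to a strict inequality, set $w:=U_{\e,+}-U_{\e_1,+}\ge0$, with $w(0)=w(\ell)=0$, and suppose $w(x_*)=0$ at some interior $x_*$. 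Then $x_*$ is an interior minimum, so $\d_x w(x_*)=0$ and $\d_x^2 w(x_*)\ge0$; writing $a:=U_{\e,+}(x_*)=U_{\e_1,+}(x_*)>0$ and $p:=\d_x U_{\e,+}(x_*)=\d_x U_{\e_1,+}(x_*)$ and evaluating the two equations at $x_*$,
\begin{equation*}
\d_x^2 w(x_*)=f'(a)\,(p-1)\Bigl(\frac1\e-\frac1{\e_1}\Bigr)<0,
\end{equation*}
since $f'(a)>0$, $p-1<0$ and $\e<\e_1$. This contradicts $\d_x^2 w(x_*)\ge0$, so $w>0$ throughout $(0,\ell)$, i.e. $U_{\e_1,+}<U_{\e,+}$.

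The main obstacle is the comparison step: because of the convection term $f'(u)\,\d_x u$ the operator $N_\e$ does \emph{not} obey the naive pointwise maximum principle (at an interior extremum of $w$ the first-order contributions cancel and the remaining zeroth-order term carries the unfavourable sign), so $U_{\e_1,+}\le U_{\e,+}$ cannot be read off from a one-line comparison argument. One must genuinely invoke the upper/lower-solution construction, whose applicability rests on the a priori bounds $\d_x U_{\e_1,+}<1$ and $\d_x^2 U_{\e_1,+}\le0$ from Proposition \ref{propstazpos}, providing the Nagumo-type gradient control. A robust alternative, avoiding the sub/supersolution machinery, is to differentiate \eqref{ForcBurgStat} in $\e$: the derivative $W:=\d_\e U_{\e,+}$ solves $\mathcal L^\e W=-\d_x^2 U_{\e,+}\ge0$ with homogeneous Dirichlet data, where $\mathcal L^\e$ denotes the linearization of \eqref{ForcBurg} about $U_{\e,+}$; the maximum principle for $\mathcal L^\e$ — valid precisely because $U_{\e,+}$ is a stable steady state, i.e. the principal eigenvalue of $\mathcal L^\e$ is negative — then forces $W<0$, and integrating in $\e$ recovers the claim.
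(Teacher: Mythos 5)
Your proof is correct and follows essentially the same route as the paper: you verify that $U_{\e_1,+}$ is a subsolution of the $\e$-problem (your identity $N_\e V = f'(V)(\d_x V-1)\bigl(1-\e/\e_1\bigr)$ equals the paper's $(\e_1-\e)\,\d_x^2 U_{\e_1,+}<0$ via the stationary equation), place it below the supersolution $h(x)=x$, and invoke monotone iteration together with the uniqueness in Proposition \ref{propstazpos} to conclude $U_{\e_1,+}\le U_{\e,+}$. Your interior-touching argument upgrading this to a strict inequality is a careful supplement to the paper's proof, which asserts strictness only with the terse phrase ``by uniqueness''.
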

\begin{proof}
Let $x \in (0,\ell)$. Since $U^{''}_{\e_1,+}(x) <0$ we note that, if $\e < \e_1$, then  $U_{\e_1,+}$ is a subsolution for \eqref{ForcBurgStat}; moreover, the exists a  larger supersolution that is $h(x)=x$. Hence, by uniqueness, there follows $U_{\e_1,+} <U_{\e,+}$.
\end{proof}

\begin{proof}[{\bf Proof of Proposition \ref{convergenza}}]
Because of Lemma \ref{introconv} and because of the symmetry properties of the problem,  if $\e < \e_1$, then $U^{\e}_{{}_{M}} > U^{\e_1}_{{}_{M}}$ for $x > \ell/2$, while
$U^{\e}_{{}_{M}} < U^{\e_1}_{{}_{M}}$ for $x < \ell/2$. Hence, since the function $v=x-\ell/2$ is a subsolution (supersolution respectively ) in the interval $(0,\ell/2)$ (in the interval $(\ell/2,\ell)$ respectively), we have 
\begin{equation*}
\begin{aligned}
&\lim_{\e \to 0} U^\e_{{}_{M}}(x) = L(x) \geq x-\ell/2 \ \ {\rm for} \ \ x \in (0,\ell/2), \\
&\lim_{\e \to 0} U^\e_{{}_{M}}(x) = l(x) \leq x-\ell/2 \ \ {\rm for} \ \ x \in (\ell/2,\ell).
\end{aligned}
\end{equation*}
Now we show that $l(x)=L(x)=x-\ell/2$. To this aim, let $a \in (0,\ell/2)$, $ c \in (\ell/2,\ell)$, $b= (\ell/2+a)/2$, $d =(\ell+c)/2$. Given $\l \in (0,1)$, let us consider the following function
\begin{equation*}
h^{\l}_{a,c}(x)=\left\{\begin{aligned}
&\frac{\l \,  \, (b-\ell/2)}{a} \,x &{\rm for} \ \  &x \in (0,a) \\
& \omega(x) &{\rm for} \ \  &x \in (a,b) \\
&\lambda (x-\ell/2) &{\rm for} \ \ &x \in (b,c) \\
& \theta(x) &{\rm for} \ \  &x \in (c,d) \\
& \frac{\l (c-\ell/2)}{\ell-d} (\ell-x) &{\rm for} \ \  &x \in (d,\ell)
\end{aligned}\right.
\end{equation*}
where $\omega(x) <0$ and $\theta(x)>0$ are $C^2$ functions such that $h^{\l}_{a,c}$ is a continuous function with continuous derivative in $x=a,b,c,d$. More precisely we require
\begin{equation*}
\begin{aligned}
&\omega(a)=\omega(b), \quad \omega'(a)= \frac{\l (b-\ell/2)}{a}, \ \ \omega'(b)=\l, \quad \omega''(a)=\omega''(b)=0, \\
&\theta(c)=\theta(d), \quad \theta'(c)= \l,  \ \ \theta'(d)=\frac{\l (\ell/2-c)}{\ell-d}, \quad \theta''(c)=\theta''(d)=0.
\end{aligned}
\end{equation*}
Under these hypotheses, it is easy to check that $h^{\l}_{a,c}$ is a supersolution to \eqref{ForcBurgStat} for $x \in (0,\ell/2)$, and a  subsolution for $x \in (\ell/2,\ell)$. Hence, for $\e$ small enough, we deduce
\begin{equation*}
\begin{aligned}
x-\ell/2 &\leq U^\e_{{}_{M}}(x) \leq h^{\l}_{a,c}(x) &{\rm for} \ \  &x \in (0,\ell/2), \\
h^{\l}_{a,c}(x) &\leq U^\e_{{}_{M}}(x) \leq x-\ell/2 &{\rm for} \ \  &x \in (\ell/2,\ell).
\end{aligned}
\end{equation*}
Since $\l$ can be chosen arbitrarily close to $1$, while $a$ and $c$ can be chosen arbitrarily close to $0$ and $\ell$ respectively, it follows that $U^\e_{{}_{M}}(x)$ converges pointwise to $x-\ell/2$ as $\e \to 0$ for all $x \in (0,\ell)$.

\end{proof}

Starting from the properties of $U_{\e,+}$ it is possible to prove similar results for the other solutions to \eqref{ForcBurgStat}, by making use of symmetries and scalings in the problem. Furthermore, equilibrium solutions with more than one zero crossing are also possible (see \cite{Goo94} for more details).

\begin{proposition}\label{Propaltre}
Concerning the solutions to \eqref{ForcBurgStat}, there exists a unique negative solution $U_{\e,-}(x):= -U_{\e,+}(\ell-x)$ such that
\begin{itemize}
\item $U'_{\e,-} \leq 1$, $x-\ell< U_{\e,-} \leq 0$ and $U''_{\e,-} \geq 0$ for $x \in (0,\ell)$.
\vskip0.1cm
\item $U_{\e,-}$ converges to $x-\ell$ in $(0,\ell)$ pointwise when  $\e \to 0$.
\vskip0.1cm
\end{itemize}
Additionally, there exists a unique stationary solution $U^\e_{{}_{NS}}$ that has one zero inside the interval $(0,\ell)$ and such that
\begin{itemize}
\item $0<{U^{\e}_{{}_{NS}}}(x) <x$ for $x < \ell/2$, and $x-\ell<{U^{\e}_{{}_{NS}}}(x) <0$ for $x > \ell/2$
\vskip0.1cm
\item ${U^{\e}_{{}_{NS}}}$ converges to the function 
\begin{equation*}
U^0_{{}_{NS}}(x):= \left\{ \begin{aligned}
& x \quad & x \in (0,\ell/2) \\
& x-\ell \quad & x \in (\ell/2,\ell)
\end{aligned}\right.
\end{equation*}
pointwise for $x\neq \ell/2$ when $\varepsilon \to 0$.
\vskip0.1cm
\end{itemize}

\end{proposition}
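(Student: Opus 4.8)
The plan is to obtain both solutions from the positive solution $U_{\e,+}$ of Proposition \ref{propstazpos} by exploiting the two symmetries of \eqref{ForcBurgStat}: the reflection $x \mapsto \ell - x$ and the sign change $u \mapsto -u$, which the equation respects precisely because $f'$ is odd by \eqref{ipof}. Writing the stationary equation as $\e u'' = f'(u)\,u' - f'(u)$, the first step is to check that $U_{\e,-}(x) := -U_{\e,+}(\ell-x)$ solves it. Setting $w(x) = -U_{\e,+}(\ell-x)$ one has $w'(x) = U'_{\e,+}(\ell-x)$, $w''(x) = -U''_{\e,+}(\ell-x)$, and $f'(w(x)) = -f'(U_{\e,+}(\ell-x))$; substituting shows the right-hand side equals $-\e\,U''_{\e,+}(\ell-x) = \e\,w''$, so $w = U_{\e,-}$ is a solution, with $U_{\e,-}(0) = -U_{\e,+}(\ell) = 0$ and $U_{\e,-}(\ell) = -U_{\e,+}(0) = 0$. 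Uniqueness of $U_{\e,-}$ is inherited from that of $U_{\e,+}$, since the same transformation turns any negative solution into a positive one.

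The qualitative bounds for $U_{\e,-}$ then follow termwise from Proposition \ref{propstazpos}: $U'_{\e,-}(x) = U'_{\e,+}(\ell-x) \leq 1$, $U''_{\e,-}(x) = -U''_{\e,+}(\ell-x) \geq 0$, and $0 < U_{\e,+}(\ell-x) \leq \ell - x$ yields $x - \ell < U_{\e,-}(x) \leq 0$; pointwise convergence $U_{\e,-}(x) \to -(\ell-x) = x-\ell$ is immediate from $U_{\e,+}(\ell-x) \to \ell-x$, which is proved exactly as in Proposition \ref{convergenza}.

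For $U^\e_{{}_{NS}}$ I would glue two half-interval solutions. On $(0,\ell/2)$ the argument of Proposition \ref{propstazpos}, run on the subinterval exactly as in Proposition \ref{prop:exstat}, yields the unique positive solution $\bar U^+(\cdot\,;0,\ell/2)$ vanishing at $0$ and $\ell/2$; I then set $U^\e_{{}_{NS}}(x) := \bar U^+(x;0,\ell/2)$ on $(0,\ell/2)$ and extend by the odd reflection $U^\e_{{}_{NS}}(x) := -U^\e_{{}_{NS}}(\ell-x)$ on $(\ell/2,\ell)$. By the symmetry computation above the reflected piece again solves \eqref{ForcBurgStat} and is negative on $(\ell/2,\ell)$, giving the required sign change at $\ell/2$. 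The barrier inequalities $0 < U^\e_{{}_{NS}} < x$ on $(0,\ell/2)$ and $x-\ell < U^\e_{{}_{NS}} < 0$ on $(\ell/2,\ell)$, together with the pointwise limits $x$ and $x-\ell$ on the two halves (hence $U^0_{{}_{NS}}$, whose jump at $\ell/2$ explains the exclusion $x\neq\ell/2$), all transfer from the half-interval versions of Propositions \ref{propstazpos} and \ref{convergenza}, and uniqueness reduces to uniqueness of the positive solution on the half interval.

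The one genuinely delicate step is the interior matching at $x=\ell/2$, i.e.\ verifying that the glued profile is an honest solution rather than merely continuous. Continuity holds since both pieces vanish there. For the derivative, the odd reflection gives ${U^\e_{{}_{NS}}}'(x) = {U^\e_{{}_{NS}}}'(\ell-x)$ on $(\ell/2,\ell)$, so the one-sided derivatives at $\ell/2$ agree and the match is $C^1$; moreover, evaluating the equation at $\ell/2$, where $U^\e_{{}_{NS}} = 0$ and $f'(0)=0$ by \eqref{ipof}, forces ${U^\e_{{}_{NS}}}''(\ell/2) = 0$ from both sides, so the profile is in fact $C^2$ and solves \eqref{ForcBurgStat} classically across $\ell/2$. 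This exploitation of the normalization $f(0)=f'(0)=0$ is the main obstacle; everything else is bookkeeping on the reflections and scalings already validated for $U_{\e,+}$.
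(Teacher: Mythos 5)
Your proposal is correct and is exactly the route the paper intends: Proposition \ref{Propaltre} is stated without proof, with the text and the remark after Proposition \ref{prop:exstat} indicating that all properties transfer from $U_{\e,+}$ via the reflection $x\mapsto\ell-x$ combined with $u\mapsto-u$ (legitimate by the oddness of $f'$ in \eqref{ipof}) and via gluing half-interval solutions $\bar U^+(\cdot\,;a,b)$, which is precisely what you do. Your explicit verification of the $C^1$ matching at $x=\ell/2$ and the observation that $f'(0)=0$ forces the glued profile to be $C^2$ there is in fact more careful than the paper's own sketch (which simply asserts a ``$C^1$-matched function''), so no gap remains.
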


\vskip0.5cm

\subsection{Stability of stationary solutions}

\begin{ans}
A stationary solution $v$ to \eqref{ForcBurg} is stable if for any $\e >0$ there exists $\delta =\delta(\e)>0$ such that, if $|u_0(x)-v(x)|_{{}_{L^\infty}} < \delta$, then there exists a time $T \geq 0$ such that $|u(x,t)-v(x)|_{{}_{L^\infty}} < \e$ for all $t\geq T$. Finally, $v$ is unstable if it is not stable.
\end{ans}

Concerning the stability properties of the stationary solutions to \eqref{ForcBurg}, the following Proposition holds.
\begin{proposition}\label{prop:stab}
The stationary solutions $U_{\e,\pm}(x)$ are stable, while $U^\e_{{}_{M}}(x)$ and $U^\e_{{}_{NS}}(x)$ are unstable.
\end{proposition}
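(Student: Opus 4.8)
The plan is to reduce the whole statement to the sign of the principal eigenvalue of the linearized operator, and then determine that sign by two different devices: a uniqueness argument for the stable states and a localized Rayleigh-quotient estimate for the unstable ones. Linearizing \eqref{ForcBurg} around a steady state $\bar U$ (writing $u=\bar U+v$) produces the operator $\mathcal L v=\e v''-(f'(\bar U)v)'+f''(\bar U)v=\e v''-f'(\bar U)v'+f''(\bar U)(1-\bar U')v$ with homogeneous Dirichlet conditions. This is a regular second-order elliptic operator on $(0,\ell)$, so its largest eigenvalue $\lambda_1$ is real and simple with a one-signed eigenfunction $\phi_1\in H^1_0$, which I normalize to be positive. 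By the principle of linearized stability for scalar parabolic problems (the comparison principle applies throughout), $\bar U$ is asymptotically stable when $\lambda_1<0$ and unstable when $\lambda_1>0$. I will repeatedly use the barrier lemma: writing the stationary operator as $N$ (so the evolution is $\partial_t u=-Nu$ and $N'(\bar U)=-\mathcal L$), for small $s>0$ one has $N(\bar U\pm s\phi_1)=\mp s\lambda_1\phi_1+O(s^2)$; since $\phi_1$ vanishes at the endpoints, $\bar U\pm s\phi_1$ are admissible sub/supersolutions in the sense of the Definition, with the roles dictated by the sign of $\lambda_1$.

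For the stable states I would show $\lambda_1(\mathcal L_+)<0$ by contradiction, exploiting the uniqueness in Proposition \ref{propstazpos}. If $\lambda_1\geq 0$, then $\underline w:=U_{\e,+}+s\phi_1$ is a strict subsolution lying strictly above $U_{\e,+}$ in the interior and, for $s$ small (using that the boundary slope satisfies $U_{\e,+}'(0)<1$, as seen from the rescaled problem \eqref{IBValpha}), still below the supersolution $x$. The parabolic flow issued from $\underline w$ is nondecreasing in $t$ and trapped in the order interval $[\underline w,x]$, hence converges to a positive stationary solution $\geq\underline w>U_{\e,+}$ somewhere — a second positive steady state, contradicting uniqueness. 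Thus $\lambda_1<0$, and in fact the entire order interval $[\alpha\sin(\pi x/\ell),\,x]$ between the subsolution and supersolution of Proposition \ref{propstazpos} lies in the basin of attraction, with $u(\cdot,t)\to U_{\e,+}$. The negative state $U_{\e,-}=-U_{\e,+}(\ell-\cdot)$ is handled symmetrically, with barriers $x-\ell$ and $U_{\e,-}-s\phi_1$.

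For $U^\e_{{}_{M}}$ and $U^\e_{{}_{NS}}$ the task reduces to proving $\lambda_1>0$. Here I would symmetrize $\mathcal L$ through the Liouville substitution $v=\exp\!\big(\tfrac{1}{2\e}\int_0^x f'(\bar U)\big)\,w$, reaching the self-adjoint form $\mathcal S w=\e w''+q\,w$ with $q=f''(\bar U)(1-\bar U')+\tfrac12(f'(\bar U))'-\tfrac{(f'(\bar U))^2}{4\e}$, so that $\lambda_1=\max_{w\in H^1_0}\tfrac{\int(-\e(w')^2+q\,w^2)}{\int w^2}$. The key point is that at the interface $x_0=\ell/2$ the steady state vanishes, so the large negative contribution $-(f'(\bar U))^2/(4\e)$ disappears there while $q(x_0)=f''(0)\,(1-\bar U'(x_0))>0$: a test function supported in an $O(\sqrt\e)$-neighborhood of $x_0$ then makes the Rayleigh quotient positive, giving $\lambda_1>0$. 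With positivity in hand, the barriers $\bar U+s\phi_1$ (subsolution) and $\bar U-s\phi_1$ (supersolution) launch monotone flows that depart from $\bar U$ and converge to $U_{\e,+}$ and $U_{\e,-}$ respectively, so these steady states are unstable.

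The main obstacle is precisely the quantitative sign $\lambda_1>0$ for $U^\e_{{}_{M}}$ and $U^\e_{{}_{NS}}$: the shallow positive potential well localized at the interface, which is the very mechanism producing the exponentially small positive eigenvalue and the metastable dynamics. Making the Rayleigh-quotient test rigorous requires tuning the width of the well against $\e$; the fully explicit verification for the model flux $f(u)=u^2/2$, where $\lambda_1=O(e^{-1/\e})$, is deferred to Section 5. A secondary technical point is the short initial boundary layer encountered when comparing $L^\infty$-close data against barriers that vanish at the endpoints, which I would absorb using parabolic smoothing and the strong diffusive control near the Dirichlet boundary.
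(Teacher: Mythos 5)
Your reduction to the sign of the principal eigenvalue is sound as far as it goes (your symmetrized potential $q$ agrees, after the scaling $\mu=\e\lambda$, with the operator $\mathcal M^\e_\xi$ the paper introduces in Section 3, up to one slip: since $\tfrac12(f'(\bar U))'=\tfrac12 f''(\bar U)\bar U'$, one has $q(x_0)=f''(0)\bigl(1-\tfrac12\bar U'(x_0)\bigr)$, not $f''(0)(1-\bar U'(x_0))$). But the central step for $U^\e_{{}_{M}}$ --- certifying $\lambda_1>0$ by a test function supported on an $O(\sqrt\e)$ neighborhood of $x_0$ --- genuinely fails, and not for a repairable technical reason. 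Rescale $x=x_0+\sqrt{\e}\,y$: since $f'(\bar U(x))=b\,(x-x_0)+O((x-x_0)^2)$ with $b=f''(0)\bar U'(x_0)$, the localized Rayleigh problem becomes, to leading order, the harmonic oscillator $\partial_y^2 w+\bigl(q(x_0)-\tfrac{b^2y^2}{4}\bigr)w=\lambda w$, whose best value is $q(x_0)-\tfrac{|b|}{2}=f''(0)\bigl(1-\bar U'(x_0)\bigr)$ when $\bar U'(x_0)\in(0,1]$. For $U^\e_{{}_{M}}$ the interior profile converges to $x-\ell/2$ and $\bar U'(x_0)\to 1$, so this leading margin degenerates to $0$; meanwhile the anharmonic corrections to $q$ on the support of $w$, the cutoff cost, and the error in the linearization of $f'(\bar U)$ contribute terms that are polynomially small in $\e$ and of uncontrolled sign. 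Since the true eigenvalue is $O(e^{-c/\e})$ (the Sun--Ward formula quoted in Section 5 of the paper), any test-function lower bound $\lambda_1\geq R[w]$ that you can only evaluate to polynomial accuracy is compatible with either sign: to make $R[w]>0$ you would need $w$ to coincide with the true ground state to within exponentially small error. Your own "main obstacle" remark concedes the tuning problem, but deferring it to Section 5 does not close it --- the paper's Section 5 cites the asymptotic expansion of \cite{SunWard99} rather than proving positivity variationally. (For $U^\e_{{}_{NS}}$ your mechanism does survive: there $\bar U'(x_0)<0$ is the large shock slope, the same computation gives ground energy $\approx f''(0)>0$ with an order-one margin; equivalently, $\lambda$ is an eigenvalue of $\mathcal L^\e_\xi$ iff $\lambda-1$ is one of $\e\partial_x^2-\partial_x(f'(U^\e)\,\cdot)$, whose top eigenvalue is near zero, so $\lambda_1\approx 1$.)

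This is exactly the trap the paper's proof is designed to avoid: it never touches the spectrum. Instability of $U^\e_{{}_{M}}$ is proved there by pure comparison --- an explicit barrier $v\geq U^\e_{{}_{M}}$ glued from the exact sub-interval steady states $\bar U^-(\cdot;\alpha,\beta)$ and $\bar U^+(\cdot;\beta,\g)$, which approaches $U^\e_{{}_{M}}$ as $\beta\to\ell/2$; the monotone convergence result (the paper's Proposition 2.10, after \cite{Sat72,AroCraPel82}) then forces $v(\cdot,t)\nearrow U_{\e,+}$ because no steady state other than $U_{\e,+}$ lies above $v$, so data arbitrarily close to $U^\e_{{}_{M}}$ leave a fixed neighborhood. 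No quantitative eigenvalue information is needed, which is the whole point when the relevant eigenvalue is exponentially small. A secondary gap in your stable case: the dichotomy "if $\lambda_1\geq 0$, then $U_{\e,+}+s\phi_1$ is a strict subsolution" breaks at $\lambda_1=0$, where $N(U_{\e,+}+s\phi_1)=O(s^2\phi_1)$ has no definite sign, so your contradiction only excludes $\lambda_1>0$ and linearization is then inconclusive; the clean fix is again non-spectral --- run the monotone squeeze from the ordered pair $\alpha\sin(\pi x/\ell)\leq x$ of Proposition \ref{propstazpos} and invoke uniqueness of the positive steady state, which yields stability of $U_{\e,+}$ directly and is the mechanism behind \cite[Theorem 6.4]{BerKamSiv01} to which the paper appeals.
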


The proof of the Proposition \ref{prop:stab} is based on the following well known result concerning the evolution problem \eqref{ForcBurg} with a sub or supersolution as initial datum (see \cite{AroCraPel82,Sat72} and \cite[Theorem 6.4]{BerKamSiv01}).
\begin{proposition}
Let $v(x)$ be a weak subsolution (respectively supersolution) to \eqref{ForcBurgStat}. Let $u(x,t)$ be the solution to \eqref{ForcBurg} with initial datum $u_0(x)=v(x)$. Then, for $t \to +\infty$, $u(x,t)$ converges monotonically to a stationary solution $U(x)$ to \eqref{ForcBurg}, i.e. $u(x,t) \nearrow U(x)$ ( or $u(x,t) \searrow U(x)$). 
\end{proposition}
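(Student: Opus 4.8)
The plan is to run the classical monotone-iteration argument of Sattinger and of Aronson--Crandall--Peletier, treating the stationary subsolution $v$ as a time-independent subsolution of the full evolution problem \eqref{ForcBurg}. The entire scheme rests on the comparison principle for the quasilinear parabolic operator $\partial_t - \e\partial_x^2 + f'(u)\partial_x - f'(u)$, which is available because the equation is uniformly parabolic (the diffusion coefficient $\e$ is a fixed positive constant) and the lower order terms are locally Lipschitz in $u$ on the bounded range in which the solution will be confined. The first step is to note that, since $v$ is a weak subsolution of \eqref{ForcBurgStat} with $v(0),v(\ell)\le 0$ and $\partial_t v\equiv 0$, it is automatically a subsolution of the time-dependent problem: the weak inequality defining a stationary subsolution is exactly the inequality $\partial_t v - \e\partial_x^2 v + f'(v)\partial_x v - f'(v)\le 0$. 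Comparing $v$ with the solution $u$ issued from $u_0=v$ (they share the homogeneous boundary data) then yields $u(x,t)\ge v(x)=u(x,0)$ for every $t\ge 0$.

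Second, I would upgrade this into monotonicity in time by exploiting the autonomous character of \eqref{ForcBurg}. For $h>0$ the shifted map $(x,t)\mapsto u(x,t+h)$ solves \eqref{ForcBurg} with the same boundary data, and at the initial instant we already know $u(\cdot,h)\ge u(\cdot,0)$ from the previous step. Applying the comparison principle to the two solutions $u(\cdot,\cdot)$ and $u(\cdot,\cdot+h)$, whose data are ordered at $t=0$, propagates the order for all later times, so that $u(\cdot,t+h)\ge u(\cdot,t)$; hence $t\mapsto u(x,t)$ is nondecreasing. The supersolution case is identical with all inequalities reversed, producing a nonincreasing trajectory.

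Third, to prevent the increasing orbit from escaping, I would provide a time-independent upper barrier: by Propositions~\ref{propstazpos}--\ref{Propaltre} the relevant stationary objects (for instance the positive steady state $U_{\e,+}$, or the affine function $x$, which is itself a supersolution dominating the subsolutions used in Section~2) furnish a supersolution $\bar w\ge v$, and a further comparison gives $u(x,t)\le \bar w(x)$ uniformly in $t$. Thus for each fixed $x$ the map $t\mapsto u(x,t)$ is monotone and bounded, whence it converges pointwise to a limit $U(x)$, i.e. $u(x,t)\nearrow U(x)$ as $t\to+\infty$. It remains to identify $U$ as a stationary solution, which I would do by an $\omega$-limit argument: from the uniform $L^\infty$ bound and interior parabolic regularity (Schauder or $L^p$ estimates on unit time windows) the time translates $u(\cdot,\cdot+t_n)$ are precompact in, say, $C^1(\bar I)$ on $I\times[0,1]$; their pointwise limit is $U$, independent of the time variable, so passing to the limit in the equation shows that $U$ solves \eqref{ForcBurgStat} with homogeneous boundary conditions, and monotonicity promotes subsequential convergence to full convergence.

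The main obstacle is that the convection term $\partial_x f(u)$ destroys the gradient structure of \eqref{ForcBurg}, so there is no obvious Lyapunov functional whose dissipation would directly force $\partial_t u\to 0$; multiplying by $\partial_t u$ leaves the term $-\int f'(u)\partial_x u\,\partial_t u$, which is not a total time derivative. The argument must therefore rely entirely on the order-preserving (monotone semiflow) structure, with the comparison principle simultaneously producing temporal monotonicity and the confining barriers, and parabolic compactness identifying the limit as stationary. A secondary technical point is that $v$ is merely a weak sub/supersolution, so the comparison principle must be invoked in its weak form as in \cite{Sat72} and \cite{AroCraPel82}; one should also check that the solution remains in the bounded range where $f'$ is Lipschitz, so that the comparison and regularity theory indeed apply.
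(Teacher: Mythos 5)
Your argument is correct and is essentially the same approach the paper relies on: the paper does not prove this proposition at all, quoting it as a well-known result with references to \cite{Sat72}, \cite{AroCraPel82} and \cite[Theorem 6.4]{BerKamSiv01}, and your chain (stationary subsolution as parabolic subsolution, time-shift comparison for monotonicity, a stationary barrier for boundedness, parabolic compactness to identify the monotone limit as a solution of \eqref{ForcBurgStat}) is exactly the classical monotone-methods argument of those references. Your closing caveats are the right ones to flag, and the barrier issue is easily settled here since every translate $x+M$ with $M\ge 0$ is a supersolution of \eqref{ForcBurgStat}, so any bounded subsolution is dominated by one of them.
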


\begin{proof}[{\bf Proof of Proposition \ref{prop:stab}}]
We prove only the instability property of $U^\e_{{}_{M}}$. To prove the results stated for $U^\e_{{}_{NS}}$ and $U_{\e,\pm}$, the basic idea is the same, and follows the lines of  \cite[Theorem 6.4]{BerKamSiv01}.
\vskip0.1cm

Given $(a,b) \subset (0,\ell)$, let $\bar U^\pm(x;a,b)$ the unique positive (respectively negative) solution to
\begin{equation*}
\left\{\begin{aligned}
		\varepsilon\,\partial_x^2u &= \partial_x f(u)+ f'(u)
		&\qquad &x\in (a,b)\\
 		u(a)	& =u(b)=0	  \\
 	 \end{aligned}\right.
\end{equation*}
Moreover, given $0<\alpha<\beta<\g<\ell$, let us define
\begin{equation*}
v(x)=\left\{ \begin{aligned}
&\bar U^-(x;\alpha,\beta) \ \ &{\rm for} \ \ &x \in [\alpha,\beta] \\ 
&\bar U^+(x;\beta,\gamma)\ \ &{\rm for} \ \ &x\in [\beta,\g] \\
& 0 \ \ &{\rm for} \ \ &x\in [\g,\ell]
\end{aligned}\right.
\end{equation*}
Hence, if $\alpha=0$, $0<\beta<\ell/2$ and $\g=\ell$, we know that $U^\e_{{}_{M}}(x) \leq v(x)$. Now let $v(x,t)$  be the solution to \eqref{ForcBurg} with initial datum $v_0(x)=v(x)$. We have $v(x,t) \nearrow U_{\e,+}$ as $t \to +\infty$, since there are no other stationary solutions $U(x)$ such that $v \leq U$. This prove the instability of $U^\e_{{}_{M}}$, since $v(x) \to U^\e_{{}_{M}}$ as $\beta \to \ell/2$.

\end{proof}

As a consequence of the instability of $U^\e_{{}_{M}}(x)$, if we start from an initial datum $u_0(x)$ close to such unstable configuration, we will see in finite time that the solution $u(x,t)$ to \eqref{ForcBurg} ``run away" from $U^\e_{{}_{M}}$. Precisely, there exists $\delta>0$ and a time $T >0$ such that
\begin{equation*}
|u(x,t)-U^\varepsilon_{{}_{M}}(x)| > \delta, \quad \forall \ \ x \in I, \ \ \forall \ \ t \geq T.
\end{equation*}
However, as already stressed before , solutions to \eqref{ForcBurg} generated by initial data close to $U^\e_{{}_{M}}$ exhibit a metastable behavior, i.e. the convergence to one of the equilibrium configuration $U_{\e,\pm}$ is exponentially slow in time. To rigorously describe such behavior, from now on we will consider equation \eqref{ForcBurg} together with continuous  initial data of the form
\begin{equation*}
u_0(x) <0 \ \ {\rm in} \ \ (0,a_0), \quad u_0(x) >0 \ \ {\rm in} \ \ (a_0, \ell), \ \ \ {\rm for \ some } \ \ a_0 \in (0,\ell).
\end{equation*}

\section{The metastable dynamics and the linearized problem}\label{sect:general}

In this Section we analyze the solution $u$ to \eqref{ForcBurg} in the vanishing viscosity limit, i.e. $\varepsilon\to0$. In particular, we address the question whether a phenomenon of metastability occurs for such kind of problem, and how this special dynamics is related to the viscosity coefficient and to the initial datum $u_0$.

Let us define the nonlinear differential operator 
$$\mathcal P^\e[u]:=\varepsilon\,\partial_x^2u - \partial_x f(u)+ f'(u),$$ 
that depends singularly on the parameter $\e$, meaning that $\mathcal P^0[u]$ is of lower order.

Our primarily assumption is the following: we suppose that there exists a one-parameter family of functions $\{ U^\varepsilon(x;\xi) \}_{\xi \in I}$ such that
\begin{itemize}
\item The nonlinear term $\mathcal P^\e[U^\e]$ is small in $\e$ in a sense that we will specify later.
\item Each element of the family is  such that
\begin{equation*}
\left\{\begin{aligned}
&U^\e(x;\xi) <0 \quad {\rm for} \quad  0< x< \xi, \\
& U^\e(x;\xi) >0 \quad {\rm for} \quad  \xi<x<\ell.
\end{aligned}\right.
\end{equation*}
 \item There exists a value $\bar \xi \in I$ such that the element of the family $U^\varepsilon(x;\bar \xi)$ corresponds to a stable steady state to \eqref{ForcBurg}.
\end{itemize}
The parameter $\xi$ describes the unique zero of $U^\e$, corresponding to the location of the interface; under these hypotheses, starting from an initial configuration close to $U^\e$, a metastable behavior for the time-dependent solution is expected.

\vskip0.2cm
The family $\{ U^\varepsilon(x;\xi) \}_{\xi \in I}$ can be seen as a {\bf family of approximate steady states} for \eqref{ForcBurg}, in the sense that each element satisfies the stationary
equation up to an error that is small in $\e$. More precisely,  we ask for the existence of a family of smooth positive functions $\Omega^\e(\xi)$, that converge to zero as $\e \to 0$, uniformly with respect to $\xi$, and such that 
\begin{equation*}
|\langle \psi(\cdot ), \mathcal P^\e[U^\e(\cdot;\xi)]\rangle| \leq \Omega^\e(\xi)|\psi|_{L^\infty}, \quad \forall \, \psi \in C(I), \, \forall \, \xi \in I.
\end{equation*}
We  also consider the following additional assumption specifying the structure of the term $\Omega^\varepsilon$: we require that there exists a family of smooth positive 
functions $\omega^\varepsilon=\omega^\varepsilon(\xi)$, uniformly convergent to zero as 
$\varepsilon\to 0$,  such that
\begin{equation*}
	\Omega^\varepsilon(\xi)\leq \omega^\varepsilon(\xi)\,|\xi-\bar \xi|.
\end{equation*}
This hypothesis incorporates the fact that,
for a distinct value  $\bar \xi\in I$,  the element $U^\varepsilon(\cdot,\bar\xi)$ solves the stationary equation. The dependence of $\Omega^\varepsilon$ and $\omega^\varepsilon$ on 
$\varepsilon$ is here crucial, since they measure how far is an element of the family $U^\e$ from being an exact stationary solution.

Let us stress that, differently to the construction in  \cite{SunWard99}, 
where the approximate stationary solutions satisfy exactly the equation and the boundary condition to 
within exponentially small terms, here we assume that the generic element $U^{\varepsilon}$ satisfies 
the boundary conditions exactly and the equation approximately.

\vskip0.2cm

Once the one-parameter family $\{ U^\varepsilon(x;\xi) \}_{\xi \in I}$ is chosen, we look for a solution to \eqref{ForcBurg} in the form
\begin{equation}\label{formau}
u(x,t)= U^\e(x;\xi(t))+v(x,t),
\end{equation}
where  the perturbation $v \in L^2(I)$ is determined by the difference between the solution $u$ and an element of the family of approximate steady states.

The idea of a linearization around $U^\e$ is developed in order to separate the two distinct phases of the dynamics of the solution. Firstly, we mean to understand what happens far from the stable equilibrium solution when the interface is formed; subsequently, we want to follow its evolution towards the asymptotic limit. To this aim, we suppose that the parameter $\xi$, describing the unique zero of the ``quasi-stationary" solution $U^\e$, depends on time, so that its evolution towards one of the wall $x=0$ or $x=\ell$ (corresponding to the equilibrium solutions $U_{\e,+}$ and $U_{\e,-}$ respectively) describes the asymptotic convergence of the interface towards the equilibrium. Hence, our purpose is to determine an equation for the value $\xi(t)$, characterizing the metastable behavior. 

By substituting \eqref{formau} into \eqref{ForcBurg}, we obtain
\begin{equation}\label{eqvNL}
\d_t v = \mathcal L^\e_{\xi(t)} v + \mathcal 	P^\e[U^\e(\cdot;\xi)] -\d_\xi U^\e(\cdot;\xi) \, \frac{d\xi}{dt}+ \mathcal Q^\e[v,\xi],
\end{equation}
where
\begin{equation*}
\begin{aligned}
\mathcal L^\e_{\xi(t)} v  := \e \d_x^2 v- \d_x(f'(U^\e) v)+ f''(U^\e) v
\end{aligned}
\end{equation*}
is the linearized operator arising from the linearization around $U^\e$, while $\mathcal Q^\e[v,\xi]$ collects  quadratic terms in $v$, and, since we are assuming $v$ to be small, it is obtained by disregarding higher order terms in the variable. Precisely,
\begin{equation*}
\mathcal Q^\e[v,\xi]:= \frac{1}{2} \left\{  -\d_x \left( f''(U^\e) v^2  \right)+ f'''(U^\e)v^2\right\}.
\end{equation*}

\subsection{Spectral hypotheses and the projection method}

We begin by analyzing the spectrum of the linearized operator $\mathcal L^\e_\xi$. The eigenvalue problem reads
\begin{equation*}
\e \d_x^2 \varphi- \d_x(f'(U^\e) \varphi)+ f''(U^\e) \varphi= \lambda^\e \varphi, \quad \varphi(0)=\varphi( \ell)=0.
\end{equation*}
Firstly, we show that the eigenvalues of $\mathcal L^\e_\xi$ are real. To this aim, let us introduce the self-adjoint operator
\begin{equation*}
\mathcal M^\e_{\xi(t)} \psi := \e^2 \d_x^2 \psi - a^\e(x;\xi(t)) \psi+\e f''(U^\e)\psi, \quad a^\e(x;\xi(t)):= \left(\frac{f'(U^\e)}{2}\right)^2+\frac{1}{2} \, \e \, \d_x f'(U^\e).
\end{equation*}
It is easy to check that $\varphi^\e$ is an eigenfunction for $\mathcal L^\e_\xi$ relative to the eigenvalue $\lambda^\e$ if and only if
\begin{equation*}
\psi^\e(x;\xi)= \exp\left( -\frac{1}{2\e} \int_{x_0}^x f'(U^\e)(t;\xi) dt \right) \varphi^\e(x;\xi)
\end{equation*}
is an eigenfunction for the operator $\mathcal M^\e_\xi$ relative to the eigenvalue $\mu^\e = \e \lambda^\e$. Hence
\begin{equation*}
\e \, \sigma(\mathcal L^\e_\xi ) \equiv \sigma(\mathcal M^\e_\xi ),
\end{equation*}
so that, since  $\mathcal M^\e_\xi$ is self-adjoint, we can state the spectrum of $\mathcal L^\e_\xi$ is composed by {\bf real eigenvalues}.

Moreover, we assume the spectrum of $\mathcal L^\e_\xi$ to be composed of a decreasing sequence $\{ \l^\e_k(\xi)\}_{k \in \N}$ of real eigenvalues such that
\begin{itemize}
\item  $\l^\e_1(\xi) \to 0$ as $\e \to 0$, uniformly with respect to $\xi$.
\item All the eigenvalues $\{ \l^\e_k \}_{k\geq 2}$ are negative and there exist constants $C,C'$ such that
\begin{equation*}
\lambda^\e_1(\xi)-\lambda_2^\e(\xi) \geq C' \ \ \ \ \ \forall \ \xi \in I, \quad \l_k^\e(\xi) \leq -C k^2, \ \ \ {\rm for} \ \ k \geq 2.
\end{equation*}
\end{itemize}
Hence, we assume that there is a spectral gap between the first and the second eigenvalue. Moreover, we ask for $\l_1^\e$ to be small in $\e$ (uniformly with respect to $\xi$) and we assume the sequence $\{\l_k^\e\}_{k \geq 2}$ to diverge to $-\infty$ as $-k^2$. 

\begin{remark}{\rm
We note that there are no requests on the sign of the first eigenvalue $\l^\e_1$;  in section 2 we have proven the instability of $U^\e_{{}_{M}}$, so that, since $U^\e$ well approximates the exact steady state $U^\e_{{}_{M}}$, we can state that {\bf the first eigenvalue $\l^\e_1$ is  positive}. The metastable behavior is indeed a consequence of the smallness, with respect to $\e$, of such first eigenvalue.}
\end{remark}

In order to obtain a differential equation for the parameter $\xi$, we use an adapted version of the {\rm projection method}: since we have supposed the first eigenvalue of the linearized operator to be small in $\e$, i.e. $\lambda_1^\e \to 0$ as $\e \to 0$, a necessary condition for the solvability of \eqref{eqvNL} is that  the first component of the solution has to be zero. More precisely, in order to remove the singular part of the operator $\mathcal L^\e_\xi$ in the limit $\e \to 0$, we set an algebraic condition ensuring orthogonality between $\psi^\e_1$ and $v$, so that the equation for the parameter $\xi(t)$ is chosen in such a way that the unique growing terms in the perturbation $v$ are canceled out.  Setting $v_k=v_k(\xi;t):=\langle \psi^\varepsilon_k(\cdot;\xi),v(\cdot,t)\rangle$, we thus impose
\begin{equation*}
\frac{d}{dt} \langle \psi^\varepsilon_1(\cdot;\xi(t)), v(\cdot,t) \rangle =0
	\qquad\textrm{and}\qquad
	\langle \psi^\varepsilon_1(\cdot;\xi_0), v_0(\cdot)\rangle=0.
\end{equation*}
Using equation \eqref{eqvNL}, we have
\begin{equation*}
	\langle \psi^\varepsilon_1(\xi,\cdot),\mathcal{L}^\varepsilon_\xi v+ {\mathcal P}^\e[U^{\varepsilon}(\cdot;\xi)]
		- \partial_{\xi}U^{\varepsilon}(\cdot;\xi)\frac{d\xi}{dt}+{\mathcal Q}^\varepsilon[v,\xi] \rangle 
		+ \langle  \partial_{\xi}\psi^\varepsilon_1(\xi,\cdot) \frac{d\xi}{dt},v \rangle =0.
\end{equation*}
Since $\langle \psi^\varepsilon_1, {\mathcal L}^\e_{\xi}v \rangle= \lambda^\e_1\langle \psi^\varepsilon_1, v \rangle=0$, 
we obtain a scalar nonlinear differential equation for the variable $\xi$,  that is
\begin{equation}\label{eqxi0}
		\frac{d\xi}{dt}=\frac{\langle \psi^\varepsilon_1(\cdot;\xi),
			{\mathcal P}^\e[U^{\varepsilon}(\cdot;\xi)]+\mathcal{Q}^\varepsilon[v,\xi] \rangle}{ \langle \psi^\varepsilon_1(\cdot;\xi), \partial_{\xi}U^{\varepsilon}(\cdot;\xi) \rangle - \langle  \partial_{\xi}\psi^\varepsilon_1(\cdot;\xi),v \rangle }.
\end{equation}
We notice that if $U^\varepsilon(\cdot;\bar\xi)$ is the exact stationary solution, then
\begin{equation*}
	\mathcal{P}[U^\varepsilon(\cdot;\xi)]
	=\mathcal{P}[U^\varepsilon(\cdot;\xi)]-\mathcal{P}[U^\varepsilon(\cdot;\bar\xi)]
	\approx\mathcal{L}_\xi^\varepsilon\partial_\xi U^\varepsilon(\cdot;\bar\xi)(\xi-\bar\xi).
\end{equation*}
The fact that $\mathcal L^\e_\xi ( \d_\xi U^\e)$ is uniformly small suggests that the first eigenfunction $\psi^\e_1$ is proportional to $\partial_\xi U^\varepsilon$ (at least for small $\e$), so that we can renormalize the first adjoint eigenfunction in such a way
\begin{equation*}
\langle \psi^\varepsilon_1(\cdot;\xi), \partial_{\xi}U^{\varepsilon}(\cdot;\xi) \rangle  =1, \quad \forall \ \xi \in I.
\end{equation*}
Since we consider  a small perturbation, in the regime $v \to 0$ we have
\begin{equation*}
	\frac{1}{1-\langle  \partial_{\xi}\psi^\varepsilon_1(\cdot;\xi),v \rangle} 
		= 1+\langle \partial_{\xi} \psi^\varepsilon_1,v \rangle+ R[v],
\end{equation*}
where the remainder $R$ is of order $o(|v|)$, and it is defined as
\begin{equation*}
R[v]:= \frac{\langle  \partial_{\xi}\psi^\varepsilon_1(\cdot;\xi),v \rangle^2}{1-\langle  \partial_{\xi}\psi^\varepsilon_1(\cdot;\xi),v \rangle}.
\end{equation*}
Inserting in \eqref{eqxi0}, we end up with the following nonlinear equation for $\xi$
\begin{equation}\label{eqxiNL}
	\frac{d\xi}{dt}=\theta^\varepsilon(\xi)\bigl(1+\langle\partial_{\xi} \psi^\varepsilon_1, v \rangle\bigr)
		+ \rho^\varepsilon[\xi,v], 
\end{equation}
where
\begin{equation*}
	\begin{aligned}
 	\theta^\varepsilon(\xi)
		&:=\langle \psi^\varepsilon_1,{\mathcal P^\e[U^{\varepsilon}] \rangle},\\
	\rho^\varepsilon[\xi,v]&:=  \langle \psi^\e_1, \mathcal Q^\e[v,\xi] \rangle \bigl(1+\langle\partial_{\xi} \psi^\varepsilon_1, v \rangle\bigr)+ \langle \psi^\e_1, \mathcal P^\e[U^\e]+ \mathcal Q^\e[v,\xi] \rangle R[v].
	\end{aligned}
\end{equation*}
Moreover, plugging \eqref{eqxiNL} into \eqref{eqvNL}, we obtain a partial differential equation for the perturbation $v$
\begin{equation}\label{eqvNLfin}
\partial_t v= H^\varepsilon(x;\xi)
		+ ({\mathcal L}^\varepsilon_\xi+{\mathcal M}^\varepsilon_\xi)v
			+\mathcal{R}^\varepsilon[v,\xi],
\end{equation}
where 
\begin{align*}
		H^\varepsilon(\cdot;\xi)&:={\mathcal P}^\varepsilon[U^{\varepsilon}(\cdot;\xi)]
			-\partial_{\xi}U^{\varepsilon}(\cdot;\xi)\,\theta^\varepsilon(\xi),\\
		{\mathcal M}^\varepsilon_\xi v&:=-\partial_{\xi}U^{\varepsilon}(\cdot;\xi)
			\,\theta^\varepsilon(\xi)\,\langle\partial_{\xi} \psi^\varepsilon_1, v \rangle,\\ 
		\mathcal{R}^\varepsilon[v,\xi]&:=\mathcal{Q}^\varepsilon[v,\xi]
								-\partial_{\xi}U^{\varepsilon}(\cdot;\xi)\,\rho^\varepsilon[\xi,v].
\end{align*}

\section{The slow motion of the interface location}

\subsection{Analysis of the linearized system}\label{Secmeta:line}
Equations \eqref{eqxiNL}-\eqref{eqvNLfin}  form a coupled system for the couple $(\xi,v)$. This system is obtained by linearizing with respect to $v$, and by keeping the nonlinear dependence on $\xi$, in order to describe the evolution of the interface when it is localized far from the equilibrium location. Hence the terms arising from the linearization around $v \sim 0$ are asymptotically smaller than the other terms and can be neglected, so that in the following we will consider the following reduced system
\begin{equation}\label{qulinsys}
\left\{\begin{aligned}
\frac{d\xi}{dt}&=\theta^\varepsilon(\xi)\bigl(1+\langle\partial_{\xi} \psi^\varepsilon_1, v \rangle\bigr), \\
\partial_t v&= H^\varepsilon(x;\xi)
		+ ({\mathcal L}^\varepsilon_\xi+{\mathcal M}^\varepsilon_\xi)v,
\end{aligned}\right.
\end{equation}
where the $o(v)$ order terms have been canceled out, together with initial data
\begin{equation}\label{Idata}
\langle \psi^\e_1(\cdot;\ \xi_0), v_0 \rangle =0, \quad v_0 = u_0- U^\e(\cdot;\xi_0).
\end{equation}
We mean to analyze the behavior of the solution to \eqref{qulinsys} in the limit of small $\e$. In order to state our first result,
let us recall the hypotheses we assumed on the terms of such system.

\vskip0.2cm
{\bf H1.}There exists a family  of approximate steady states $\{ U^\e(x;\xi) \}_{\xi \in I}$ such that

\begin{itemize}

\item There exists a value $\bar \xi \in I$ such that the element $U^\e(x;\bar \xi)$ corresponds to a stable steady state for the original equation.

\item For every $\xi \in I$, each element of the family satisfies
\begin{equation*}
U^\e(x;\xi) >0 \ \ \ {\rm for} \ \ \ x \in (0,\xi) \quad {\rm and} \quad  U^\e(x;\xi) <0 \ \ \ {\rm for} \ \ \ x \in (\xi,\ell).
\end{equation*}

\item There exists a family of smooth and positive functions $\Omega^\e(\xi)$, uniformly converging  to zero as $\e \to 0$, such that there holds
\begin{equation*}
|\langle \psi(\cdot ), \mathcal P^\e[U^\e(\cdot;\xi)]\rangle| \leq \Omega^\e(\xi)|\psi|_{L^\infty}, \quad \forall \, \psi \in C(I), \, \forall \, \xi \in I.
\end{equation*}

\item There exists a family of smooth positive 
functions $\omega^\varepsilon=\omega^\varepsilon(\xi)$, uniformly convergent to zero as 
$\varepsilon\to 0$,  such that
\begin{equation*}
	\Omega^\varepsilon(\xi)\leq \omega^\varepsilon(\xi)\,|\xi-\bar \xi|.
\end{equation*}

\end{itemize}

\vskip0.2cm
{\bf H2.}  The sequence of eigenvalues $\{ \lambda^\e_k(\xi)\}_{k \in \N}$ of the linearized operator $\mathcal L^\e_\xi$ is such  that
\begin{itemize}
\item  $\l^\e_1(\xi) \to 0$ as $\e \to 0$ uniformly with respect to $\xi$.
\vskip0.1cm
\item All the eigenvalues $\{ \l^\e_k \}_{k\geq 2}$ are negative and there exist constants $C,C'$ such that
\begin{equation*}
\lambda^\e_1(\xi)-\lambda_2^\e(\xi) \geq C' \ \ \  \forall \ \xi \in I, \qquad \l_k^\e(\xi) \leq -C k^2, \ \ \ {\rm if} \ \ k \geq 2.
\end{equation*}

\end{itemize}

Finally, The eigenfunctions $\varphi^\varepsilon_k(\cdot;\xi)$ and $\psi^\varepsilon_k(\cdot;\xi)$
are normalized so that
\begin{equation*}
	\langle \psi^\varepsilon_1(\cdot;\xi), \partial_{\xi}U^{\varepsilon}(\cdot;\xi)\rangle=1
	\qquad\textrm{and}\qquad
	\langle \psi^\varepsilon_j, \varphi^\varepsilon_k\rangle
	=\left\{\begin{aligned} & 1	&\qquad &\textrm{if }j=k,\\ & 0	&\qquad &\textrm{if }j\neq k,
		\end{aligned}\right.
\end{equation*}

and we assume 
\begin{equation}\label{derpsiphi}
	\sum_{j} \langle \partial_\xi \psi^\varepsilon_k, \varphi^\varepsilon_j\rangle^2
	=\sum_{j} \langle \psi^\varepsilon_k, \partial_\xi \varphi^\varepsilon_j\rangle^2
	\leq C	\qquad\qquad\forall\,k.
\end{equation}

for some constant $C$ that does not depend on $\xi$.

\vskip0.2cm
 We recall that we denoted by  $\varphi^\varepsilon_k=\varphi^\varepsilon_k(\cdot;\xi)$ the  right eigenfunctions of $\mathcal L^\e_\xi$ and by $\psi^\varepsilon_k=\psi^\varepsilon_k(\cdot;\xi)$ the eigenfunctions of the corresponding  adjoint operator  $\mathcal{L}^{\varepsilon,\ast}_\xi$. In the following, we shall use the notation $\Lambda^\varepsilon_k:=\sup\limits_{\xi\in I} \lambda^\varepsilon_k(\xi)$.

\begin{theorem}\label{thm:metaL}
Let hypotheses {\bf H1-2} be satisfied. Then, denoted by $(\xi,v)$ the solution to the initial-value problem \eqref{qulinsys},
for any $\varepsilon$ sufficiently small, there exists a time $T^\varepsilon$ 
such that for any $t\leq T^\varepsilon$ the solution $v$ can be represented as
\begin{equation*}
 v=z+R,
\end{equation*}
where $z$ is defined by
\begin{equation*}
	z(x,t):=\sum_{k\geq 2} v_k(0)\exp\left(\int_0^t \lambda^\varepsilon_k(\xi(\sigma))\,d\sigma\right)
		\,\varphi^\varepsilon_k(x;\xi(t)),
\end{equation*}
and the remainder $R$ satisfies the estimate
\begin{equation}\label{boundw}
	|R|_{{}_{L^2}}\,\leq C\,|\Omega^\varepsilon|_{{}_{L^\infty}}
		\left(|v_0|_{{}_{L^2}}+1\right),
\end{equation}
for some constant $C>0$. 

Moreover, for $v_0$ sufficiently small in $L^2$, the final time $T^\varepsilon$ can be chosen of the order 
 $C |\L_1^\e|^{-1}$, hence diverging to $+\infty$ as $\e \to 0$.
\end{theorem}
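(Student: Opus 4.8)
The plan is to work in the eigenbasis of $\mathcal L^\varepsilon_\xi$ and to exploit that the projection method has been tuned so that the slow mode is never excited, while the spectral gap in {\bf H2} damps the fast modes. Expanding $v=\sum_k v_k(t)\,\varphi^\varepsilon_k(\cdot;\xi(t))$ with $v_k(t):=\langle\psi^\varepsilon_k(\cdot;\xi(t)),v(\cdot,t)\rangle$, the constraint \eqref{Idata} together with the first line of \eqref{qulinsys} was constructed exactly so that $\tfrac{d}{dt}v_1\equiv0$, hence $v_1(t)\equiv0$ and $v=\sum_{k\geq2}v_k\,\varphi^\varepsilon_k$. Introducing the free coefficients $\zeta_k(t):=v_k(0)\exp(\int_0^t\lambda^\varepsilon_k(\xi(\sigma))\,d\sigma)$, which solve $\dot\zeta_k=\lambda^\varepsilon_k\zeta_k$, one has $z=\sum_{k\geq2}\zeta_k\,\varphi^\varepsilon_k(\cdot;\xi(t))$ and $R:=v-z=\sum_{k\geq2}(v_k-\zeta_k)\varphi^\varepsilon_k$; in particular $\langle\psi^\varepsilon_1,R\rangle=0$, so $R$ lies in the span of the fast eigenfunctions, on which $\mathcal L^\varepsilon_\xi$ has spectrum bounded above by $\Lambda^\varepsilon_2<0$.

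To obtain an equation for $R$ I would differentiate $z$ in time; using $\mathcal L^\varepsilon_\xi\varphi^\varepsilon_k=\lambda^\varepsilon_k\varphi^\varepsilon_k$ gives $\partial_t z=\mathcal L^\varepsilon_\xi z+\dot\xi\sum_{k\geq2}\zeta_k\,\partial_\xi\varphi^\varepsilon_k$, and subtracting this from the $v$-equation of \eqref{qulinsys} yields
\begin{equation*}
\partial_t R=\mathcal L^\varepsilon_\xi R+G,\qquad G:=H^\varepsilon(\cdot;\xi)+\mathcal M^\varepsilon_\xi v-\dot\xi\sum_{k\geq2}\zeta_k\,\partial_\xi\varphi^\varepsilon_k,
\end{equation*}
where $H^\varepsilon$ and $\mathcal M^\varepsilon_\xi$ are as in \eqref{eqvNLfin}. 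The decisive structural fact is that every term in $G$ carries a factor of size $|\Omega^\varepsilon|_{L^\infty}$: by {\bf H1} one has $|\theta^\varepsilon(\xi)|=|\langle\psi^\varepsilon_1,\mathcal P^\varepsilon[U^\varepsilon]\rangle|\leq\Omega^\varepsilon|\psi^\varepsilon_1|_{L^\infty}$, so that $\dot\xi=\theta^\varepsilon(1+\langle\partial_\xi\psi^\varepsilon_1,v\rangle)$ is $O(\Omega^\varepsilon)$ and the rank-one term $\mathcal M^\varepsilon_\xi v=-\partial_\xi U^\varepsilon\,\theta^\varepsilon\,\langle\partial_\xi\psi^\varepsilon_1,v\rangle$ is $O(\Omega^\varepsilon)$ as well, while the two contributions of $H^\varepsilon=\mathcal P^\varepsilon[U^\varepsilon]-\partial_\xi U^\varepsilon\,\theta^\varepsilon$ are controlled by the pairing bound of {\bf H1} and by the smallness of $\theta^\varepsilon$.

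I would then close the estimate by a Gronwall argument on the coefficients $R_k=\langle\psi^\varepsilon_k,R\rangle$, $k\geq2$, which solve $\dot R_k=\lambda^\varepsilon_k R_k+\langle\psi^\varepsilon_k,G\rangle$. Differentiating $\tfrac12\sum_{k\geq2}R_k^2$ and using $\lambda^\varepsilon_k\leq\Lambda^\varepsilon_2<0$ for $k\geq2$ gives
\begin{equation*}
\frac12\frac{d}{dt}\sum_{k\geq2}R_k^2\leq\Lambda^\varepsilon_2\sum_{k\geq2}R_k^2+\left(\sum_{k\geq2}R_k^2\right)^{1/2}\left(\sum_{k\geq2}\langle\psi^\varepsilon_k,G\rangle^2\right)^{1/2}.
\end{equation*}
The only subtle piece of the forcing is $\dot\xi\sum_{j\geq2}\zeta_j\langle\psi^\varepsilon_k,\partial_\xi\varphi^\varepsilon_j\rangle$: hypothesis \eqref{derpsiphi} is precisely what guarantees that the mode-coupling matrix $(\langle\psi^\varepsilon_k,\partial_\xi\varphi^\varepsilon_j\rangle)_{k,j}$ acts boundedly on $\ell^2$ uniformly in $\xi$, so that this contribution is bounded by $C|\dot\xi|\,|\zeta|_{\ell^2}\leq C|\Omega^\varepsilon|_{L^\infty}|v_0|_{L^2}$ (using $|\zeta(t)|_{\ell^2}\leq|\zeta(0)|_{\ell^2}\leq C|v_0|_{L^2}$, which follows from the negativity of the fast eigenvalues). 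Since the damping constant $\Lambda^\varepsilon_2$ is strictly negative and the forcing is of size $C|\Omega^\varepsilon|_{L^\infty}(1+|v_0|_{L^2})$, Gronwall's inequality yields the bound \eqref{boundw}; here the identification of $\sum_{k\geq2}R_k^2$ with $|R|^2_{L^2}$ is made through the self-adjoint realization obtained by the conjugation $\psi^\varepsilon=\exp(-\tfrac{1}{2\varepsilon}\int_{x_0}^x f'(U^\varepsilon))\,\varphi^\varepsilon$, whose eigenfunctions form an orthonormal basis and give a uniform-in-$\xi$ equivalence of the coefficient $\ell^2$-norm and the $L^2$-norm.

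The main obstacle I anticipate is the bookkeeping forced by the $\xi$-dependence of the eigenbasis: because $\dot\xi\neq0$ the functions $\varphi^\varepsilon_k(\cdot;\xi(t))$ and $\psi^\varepsilon_k(\cdot;\xi(t))$ move in time, generating the infinite mode-coupling sums above, and controlling them uniformly is exactly the reason {\bf H2} includes \eqref{derpsiphi}. Finally, the assertion $T^\varepsilon\sim C|\Lambda^\varepsilon_1|^{-1}$ follows from a continuation argument: the estimate is valid as long as $\xi(t)$ remains in $I$ and $v$ stays small, and since $\dot\xi=O(\Omega^\varepsilon)$ with $\Omega^\varepsilon$ tied to the first eigenvalue through the structure condition $\Omega^\varepsilon(\xi)\leq\omega^\varepsilon(\xi)|\xi-\bar\xi|$ of {\bf H1} and the smallness of $\lambda^\varepsilon_1$, the admissible window has length of order $|\Lambda^\varepsilon_1|^{-1}$, which diverges as $\varepsilon\to0$.
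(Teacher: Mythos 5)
Your decomposition $v=z+R$ and the general plan mirror the paper's, but the linchpin of your argument --- that $\tfrac{d}{dt}v_1\equiv 0$, hence $v_1\equiv 0$ and $R$ lies in the span of the fast eigenfunctions --- is false for the reduced system \eqref{qulinsys}, and repairing it is precisely the hard part of the paper's proof. The exact cancellation $\tfrac{d}{dt}\langle \psi^\varepsilon_1,v\rangle=0$ was built into the \emph{full} system; in passing to \eqref{qulinsys} the terms $\rho^\varepsilon$ and $\mathcal{R}^\varepsilon$ were discarded, and a direct computation (using $\langle\psi^\varepsilon_1,H^\varepsilon\rangle=0$, the normalization $\langle\psi^\varepsilon_1,\partial_\xi U^\varepsilon\rangle=1$, and the equation for $\xi$) leaves
\begin{equation*}
\frac{dv_1}{dt}=\lambda^\varepsilon_1 v_1+\theta^\varepsilon(\xi)\,\langle\partial_\xi\psi^\varepsilon_1,v\rangle^2,
\end{equation*}
which is exactly the paper's equation \eqref{eqw1}: the slow mode \emph{is} excited, quadratically in $v$. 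The paper handles this through the Duhamel formula \eqref{reprform}, producing the term $\int_0^t \Omega^\varepsilon |v|^2_{{}_{L^2}} E_1(s,t)\,ds$ weighted by the propagator of the first (positive) eigenvalue; controlling it forces the quadratic bootstrap $N(t)\leq AN^2(t)+B$ and the smallness condition \eqref{timecond}, and it is \emph{this} condition --- not a continuation argument about $\xi$ remaining in $I$ --- that generates the finite horizon $T^\varepsilon\sim C|\Lambda^\varepsilon_1|^{-1}$. In your scheme, with $v_1$ deleted, the remaining estimate is linear in $R$ and would hold for all times, which is why you were reduced to a vague continuation argument for the time restriction: the absence of any mechanism producing $T^\varepsilon$ in your proof is a symptom of the missing quadratic slow-mode term.

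There are also two quantitative gaps in your Gronwall step. First, it requires $\bigl(\sum_{k\geq2}\langle\psi^\varepsilon_k,G\rangle^2\bigr)^{1/2}\leq C|\Omega^\varepsilon|_{{}_{L^\infty}}(1+|v_0|_{{}_{L^2}})$, but hypothesis {\bf H1} only yields bounds on $\langle\psi^\varepsilon_k,H^\varepsilon\rangle$ that are \emph{uniform} in $k$, and a uniformly bounded sequence is not square-summable; likewise \eqref{derpsiphi} bounds the rows of the matrix $\bigl(\langle\partial_\xi\psi^\varepsilon_k,\varphi^\varepsilon_j\rangle\bigr)_{k,j}$ uniformly in $\ell^2$, which gives $\ell^2\to\ell^\infty$ control, not the $\ell^2\to\ell^2$ operator bound you invoke. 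The paper never sums squares of the forcing: it represents each $v_k$ by Duhamel and sums the propagators, using $\lambda^\varepsilon_k\leq -Ck^2$ to obtain $\sum_{k\geq2}E_k(s,t)\leq C(t-s)^{-1/2}E_2(s,t)$, so the decay in $k$ is supplied by the semigroup rather than by the coefficients (a possible repair of your step would be a weighted Cauchy--Schwarz using the extra dissipation $\sum_k k^2 R_k^2$, but as written the step fails). Second, the claimed identification of $\sum_{k\geq2}R_k^2$ with $|R|^2_{{}_{L^2}}$ via the conjugation to the self-adjoint operator is at best uniform in $\xi$: the weight $\exp\bigl(-\tfrac{1}{2\varepsilon}\int_{x_0}^x f'(U^\varepsilon)\bigr)$ degenerates exponentially as $\varepsilon\to0$, so the equivalence constants are not uniform in $\varepsilon$, whereas the constant in \eqref{boundw} must be.
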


\begin{proof}

The idea of the proof is analogous to the proof of \cite[Theorem 2.1]{MS}.  Setting
\begin{equation*}
	v(x,t)=\sum_{j} v_j(t)\,\varphi^\varepsilon_j(x,\xi(t)),
\end{equation*}
we obtain an infinite-dimensional differential system for the coefficients $v_j$
\begin{equation}\label{eqvk_bis}
	\frac{dv_k}{dt}=\lambda^\varepsilon_k(\xi)\,v_k
		+\langle \psi^\varepsilon_k,F\rangle,
\end{equation}
where
\begin{equation*}
	F:=H^\varepsilon+\sum_{j} v_j\,\Bigl\{{\mathcal M}^\varepsilon_\xi\, \varphi^\varepsilon_j
			-\partial_\xi \varphi^\varepsilon_j\,\frac{d\xi}{dt}\Bigr\}
		=H^\varepsilon-\theta^\varepsilon \sum_{j}\Bigl(a_j+\sum_{\ell} b_{j\ell}\,v_\ell\Bigr)v_j.
\end{equation*}
and the coefficients $a_j$, $b_{jk}$ are given by
\begin{equation*}
	a_j:=\langle \partial_{\xi} \psi^\varepsilon_1, \varphi^\varepsilon_j\rangle\,
			\partial_{\xi}U^{\varepsilon}+\partial_\xi \varphi^\varepsilon_j,
	\qquad
	b_{j\ell}:=\langle \partial_{\xi} \psi^\varepsilon_1, \varphi^\varepsilon_\ell\rangle
			\,\partial_\xi \varphi^\varepsilon_j.
\end{equation*}
Convergence of the series is guaranteed by assumption \eqref{derpsiphi}. 

Now let us set
\begin{equation*}
	E_k(s,t):=\exp\left( \int_s^t \lambda_k^\varepsilon(\xi(\sigma))d\sigma\right).
\end{equation*}
Note that, for $0\leq s<t$, there hold
\begin{equation*}
	E_k(s,t)=\frac{E_k(0,t)}{E_k(0,s)}
		\qquad\textrm{and}\qquad	
	0\leq E_k(s,t)\leq e^{\Lambda_k(t-s)}.
\end{equation*}
By differentiating \eqref{derpsiphi}, we deduce
\begin{equation}\label{antisim}
	\langle \partial_\xi \psi^\varepsilon_j, \varphi^\varepsilon_k\rangle
	+\langle \psi^\varepsilon_j,  \partial_\xi \varphi^\varepsilon_k\rangle=0.
\end{equation}
Hence, for the coefficients $a_j$  
there holds
\begin{equation*}
	\langle \psi^\varepsilon_k, a_j\rangle 
		=\langle \partial_{\xi} \psi^\varepsilon_1, \varphi^\varepsilon_j\rangle\,
			\bigl(\langle \psi^\varepsilon_k, \partial_{\xi}U^{\varepsilon}\rangle-1\bigr),
\end{equation*}
so that, since $\langle \psi^\varepsilon_1, a_j\rangle=0$ for any $j$, equation \eqref{eqvk_bis} for $k=1$ simplifies to 
\begin{equation}\label{eqw1}
	\frac{dv_1}{dt}=\lambda^\varepsilon_1(\xi)\,v_1
		-\theta^\varepsilon(\xi) \sum_{\ell, j} \langle \psi^\varepsilon_1, b_{j\ell}\rangle \,v_\ell\,v_j.
\end{equation}
Choosing $v_1(0)=0$, it follows
\begin{equation}\label{reprform}
	\begin{aligned}
	v_1(t)&=-\int_0^t \theta^\varepsilon(\xi)
		\sum_{\ell, j} \langle \psi^\varepsilon_1, b_{j\ell}\rangle \,v_\ell\,v_j,
		\,E_1(s,t)\,ds\\
	v_k(t)&=v_k(0)\,E_k(0,t)\\
		& +\int_0^t \Bigl\{\langle \psi^\varepsilon_k,H^\varepsilon\rangle
			-\theta^\varepsilon(\xi)\sum_{j}\Bigl(\langle \psi^\varepsilon_k, a_j\rangle 
			+\sum_{\ell} \langle \psi^\varepsilon_k, b_{j\ell}\rangle \,v_\ell\Bigr) v_j
			\Bigr\} E_k(s,t)\,ds,
	\end{aligned}
\end{equation}
for $k\geq 2$.
Let us introduce the function 
\begin{equation*}
	z(x,t):=\sum_{k\geq 2} v_k(0)\,E_k(0,t)\,\varphi^\varepsilon_k(x;\xi(t)),
\end{equation*}
which satisfies the estimate $|z|_{{}_{L^2}}\leq |v_0|_{{}_{L^2}} e^{\Lambda^\varepsilon_2\, t}$. Since there hold
\begin{equation*}
 	|\theta^\varepsilon(\xi)|\leq C\,\Omega^\varepsilon(\xi)
		\qquad\textrm{and}\qquad
	|\langle \psi^\varepsilon_k,H^\varepsilon\rangle| \leq C\,\Omega^\varepsilon(\xi)
		\left\{1+ |\langle \psi_k^\varepsilon, \partial_\xi U^\varepsilon\rangle|\right\},
\end{equation*}
after some computations we end up with
\begin{equation*}
	\begin{aligned}
	|v-z|_{{}_{L^2}}(t)&\leq 
		C\int_0^t \Omega^\varepsilon(\xi)|v|_{{}_{L^2}}^2(s)\,E_1(s,t)\,ds
		+C\sum_{k\geq 2}\int_0^t\Omega^\varepsilon(\xi)
			\bigl(1+|v|_{{}_{L^2}}^2(s)\bigr)E_k(s,t)\,ds\\
				&\leq 
		C\int_0^t \Omega^\varepsilon(\xi)\Bigl\{|v|_{{}_{L^2}}^2(s)\,E_1(s,t)
			+\bigl(1+|v|_{{}_{L^2}}^2(s)\bigr)\,\sum_{k\geq 2} E_k(s,t)\Bigr\}\,ds.
	\end{aligned}
\end{equation*}
for some constant $C>0$ depending on the $L^\infty-$norm of $\psi^\varepsilon_k$.
The assumption on the asymptotic behavior of the eigenvalues $\lambda^\e_k$, $k \geq 2$, 
can now be used to bound the series.
Indeed, there holds
\begin{equation*}
	\sum_{k\geq 2} E_k(s,t) \leq E_2(s,t) \sum_{k\geq 2} \frac{E_k(s,t)}{E_2(s,t)}
		\leq C\,(t-s)^{-1/2}\,E_2(s,t) 
\end{equation*}
so that
\begin{align*}
	E_1(t,0) |v-z|_{{}_{L^2}}
		&\leq  C\int_0^t \Omega^\varepsilon(\xi)\Bigl\{|v-z|^2_{{}_{L^2}}(s)\, E_1(s,0)\\
		&\qquad +|z|^2_{{}_{L^2}}(s)\,E_1(s,0)+(t-s)^{-1/2}\,E_2(s,t) E_1(s,0)\Bigr\}\,ds.
\end{align*}
Setting
\begin{equation*}
	N(t):= \sup_{s\in[0,t]} |v-z|_{{}_{L^2}}\,E_1(s,0),
\end{equation*}
then, since $\Lambda^\varepsilon_2\leq \Lambda^\varepsilon_1$,  we obtain 
\begin{equation*}
	\begin{aligned}
	E_1(t,0)|v-z|_{{}_{L^2}} &\leq C\int_0^t \Omega^\varepsilon(\xi)N^2(s)\,E_1(0,s)\,ds \\
		& \quad + C\int_0^t \Omega^\varepsilon(\xi) \Big\{|v_0|^2_{{}_{L^2}}
			 e^{2\Lambda^\varepsilon_2 s}E_1(s,0)  
		+(t-s)^{-1/2}\,E_2(s,t) E_1(s,0) \Bigr\}ds.\\
	\end{aligned}
\end{equation*}
Moreover
\begin{align*}
	&\int_0^t e^{2\Lambda^\varepsilon_2s}E_1(s,0)\,ds
		\leq  E_1(t,0)\int_0^t e^{\Lambda^\varepsilon_2 s}\,ds
		=E_1(t,0)\frac{1}{\Lambda_2^\varepsilon}(e^{\Lambda^\varepsilon_2 s}-1)\leq  \frac{1}{|\Lambda_2^\varepsilon|} E_1(t,0),\\
	&\int_0^t (t-s)^{-1/2}\,E_2(s,t)\,ds
		\leq \int_0^t (t-s)^{-1/2}\,e^{\Lambda_2^\varepsilon\,(t-s)}\,ds
		\leq \frac{1}{|\Lambda_2^\varepsilon|^{1/2}},
\end{align*}
so that, recalling that $\Lambda_2^\varepsilon$ is bounded away from $0$, we deduce
\begin{equation*}
       \begin{aligned}
       &E_1(t,0)|v-z|_{{}_{L^2}} \leq C\Big\{  N^2(t) 
       	\biggl[\int_0^t \Omega^\varepsilon(\xi)\, E_1(0,s) \, ds \biggr]
	+C|\Omega^\varepsilon|_{{}_{\infty}}\left(|v_0|^2_{{}_{L^2}} E_1(t,0)+E_1(t,0)\right) \Big\},
       \end{aligned}
\end{equation*}
that is
\begin{equation*}
	N(t) \leq A N^2(t)+B
		\qquad\textrm{with}\quad
		\left\{\begin{aligned}
		A&:=C\left\{\int_0^t \Omega^\varepsilon(\xi)\, E_1(0,s) \, ds\right\},\\
		B&:=C|\Omega^\varepsilon|_{{}_{L^\infty}} E_1(t,0)\Bigl(|v_0|^2_{{}_{L^2}}+1 \Bigr).
		\end{aligned}\right. 
\end{equation*}
Hence, as soon as
\begin{equation}\label{timecond}
  4AB=4C^2|\Omega^\varepsilon|_{{}_{L^\infty}}E_1(t,0)\Bigl(|v_0|^2_{{}_{L^2}}+1 \Bigr)
	\left(\int_0^t \Omega^\varepsilon(\xi)\, E_1(0,s) \, ds\right)<1,
\end{equation}
there holds
\begin{equation*}
	N(t)\leq  \frac{2B}{1+\sqrt{4AB}}\leq 2B 
		= C\,|\Omega^\varepsilon|_{{}_{L^\infty}}E_1(t,0)\Bigl(|v_0|^2_{{}_{L^2}}+1\Bigr),
\end{equation*}
that means, in term of the difference $v-z$, 
\begin{equation*}
	|v-z|_{{}_{L^2}}\,
		\leq C\,|\Omega^\varepsilon|_{{}_{L^\infty}}
		\Bigl(|v_0|^2_{{}_{L^2}}+1\Bigr).
\end{equation*}
Finally, condition \eqref{timecond} gives a constraint on the final time $T^\varepsilon$. Indeed we ask for
\begin{equation}\label{timecond2}
         4C^2\,|\Omega^\varepsilon|_{{}_{L^\infty}}E_1(t,0)
 	\Bigl(|v_0|^2_{{}_{L^2}}+1 \Bigr) <1
\end{equation}
and
\begin{equation}\label{timecond3}
\int_0^t \Omega^\varepsilon(\xi)\, E_1(0,s) \, ds<1
\end{equation}
to assure condition \eqref{timecond} to be satisfied.
Constraint \eqref{timecond2} can be rewritten as
\begin{equation*}
	 \exp\left(-\int_0^t \lambda^\varepsilon_1(\xi)\,d\sigma\right)
	=E_1(t,0) \leq \frac{C}{|\Omega^\varepsilon|_{{}_{L^\infty}} (|v_0|^2_{{}_{L^2}}+1)},
\end{equation*}
that is, we can choose $T^{\varepsilon,1}$ of the form
\begin{equation*}
 	T^\varepsilon:=\frac{1}{|\Lambda_1^\varepsilon|}
		\ln\left( \frac{C}{|\Omega^\varepsilon|_{{}_{L^\infty}} (|v_0|^2_{{}_{L^2}}+1)}\right)
		\sim C \,|\Lambda_1^\varepsilon|^{-1}
		\ln \left(|\Omega^\varepsilon|^{-1}_{{}_{L^\infty}}\right).
\end{equation*}
On the other hand, from \eqref{timecond3}, we have
\begin{equation*}
\int_0^t \Omega^\varepsilon(\xi)\, E_1(0,s) \, ds \leq |\Omega^\e|_{{}_{L^\infty}} \int_0^t e^{\L_1^\e t} <1,
\end{equation*}
that is
\begin{equation*}
|\Omega^\e|_{{}_{L^\infty}}  |\L_1^\e|^{-1}\left( e^{\L_1^\e t}-1\right) <1 \quad \Rightarrow \quad T^{\e,2} := \frac{1}{|\L_1^\varepsilon|}
		\ln\left(\frac{\L_1^\e}{|\Omega^\varepsilon|_{{}_{L^\infty}}} +1\right) \sim C \, |\L_1^\e|^{-1}.
\end{equation*}
Now the proof is completed.
\end{proof}

Theorem \ref{thm:metaL} gives a very precise estimate for the perturbation $v$. Indeed, we have proven that $v$ has decays properties similar to those of the function $z(x,t)$, hence it converges to zero very fast for $t \to +\infty$; moreover, the difference $|v-z|_{{}_{L^2}}$ is bounded by $|\Omega^\e|_{{}_{L^\infty}}$, meaning that it is small with respect to $\e$.
\begin{remark}{\rm
The previous proof can be easily extend to the case $v \in [L^2(I)]^n$ (see also \cite[Theorem 2.1]{MS}). This is meaningful in light of a possible application of this result in the case of systems.
}
\end{remark}

\vskip0.2cm
Since the final time $T^\e$ is diverging to $+\infty$ for $\e \to 0$, estimate \eqref{boundw} holds globally in time, and the precise decomposition for the perturbation $v$ can be used in the equation for $\xi(t)$ in order to decoupled the system \eqref{qulinsys}. Indeed
\begin{equation*}
|\langle \d_\xi \psi_1^\e, v \rangle| \leq C \left( |v_0|_{{}_{L^2}} e^{\Lambda_2^\e t}+ |\Omega^\e|_{{}_{L^\infty}} \right),
\end{equation*}
so that, for small $\e$ and $|v_0|_{{}_{L^2}}$, the function $\xi(t)$ behaves like the solution $\zeta(t)$ to the following problem
\begin{equation*}
\frac{d\zeta}{dt} = \theta^\e(\zeta), \quad \zeta(0)=\xi_0.
\end{equation*}

\begin{proposition}\label{prop:slowmotion}
Let hypotheses {\bf H1-2} be satisfied. Let us also assume that
\begin{equation}
	\theta^\varepsilon(\xi)<0\quad\textrm{ for any } \xi\in I,
	\qquad\textrm{ and }\qquad
	{\theta^\varepsilon}'(\bar \xi)<0.
\end{equation}
Then, for $\varepsilon$ and $|v_0|_{{}_{L^2}}$ sufficiently small, $\xi(t)$ converges to its equilibrium location $\bar \xi$ as $t\to+\infty$.
\end{proposition}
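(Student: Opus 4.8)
The plan is to reduce the non-autonomous first equation of \eqref{qulinsys} to the autonomous scalar ODE $\dot\zeta=\theta^\varepsilon(\zeta)$ through a time reparametrization, and then read off the convergence from the sign structure of $\theta^\varepsilon$ near $\bar\xi$. First I would record that $\bar\xi$ is exactly an equilibrium of $\theta^\varepsilon$: by {\bf H1} one has $|\theta^\varepsilon(\xi)|=|\langle\psi^\varepsilon_1,\mathcal P^\varepsilon[U^\varepsilon]\rangle|\leq C\,\Omega^\varepsilon(\xi)\leq C\,\omega^\varepsilon(\xi)\,|\xi-\bar\xi|$, so that $\theta^\varepsilon(\bar\xi)=0$. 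Together with the assumption ${\theta^\varepsilon}'(\bar\xi)<0$ this makes $\bar\xi$ a non-degenerate, asymptotically stable zero of the autonomous flow, while the global sign condition $\theta^\varepsilon<0$ on $I$ places every admissible initial position in its basin of attraction.

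The decisive structural remark is that the perturbation coming from the coupling with $v$ enters the $\xi$-equation \emph{multiplicatively}: writing $g(t):=1+\langle\partial_\xi\psi^\varepsilon_1,v\rangle$, the first line of \eqref{qulinsys} reads $\dot\xi=\theta^\varepsilon(\xi)\,g(t)$, so $g$ can only rescale the speed of the flow and can never displace the zero $\bar\xi$ of $\theta^\varepsilon$. To control $g$ I would invoke Theorem \ref{thm:metaL} together with the bound $|\langle\partial_\xi\psi^\varepsilon_1,v\rangle|\leq C(|v_0|_{L^2}e^{\Lambda^\varepsilon_2 t}+|\Omega^\varepsilon|_{L^\infty})$ recorded just before the statement; since $\Lambda^\varepsilon_2<0$ and $|\Omega^\varepsilon|_{L^\infty}\to 0$, for $\varepsilon$ and $|v_0|_{L^2}$ small enough the right-hand side stays below $1/2$ uniformly in $t$, whence $g(t)\in[1/2,3/2]$ and in particular $g(t)>0$ for every $t$.

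Granting this, I would introduce the time change $\tau(t):=\int_0^t g(s)\,ds$. Because $g$ is trapped between two positive constants, $\tau$ is a strictly increasing bijection of $[0,\infty)$ onto itself with $\tau(t)\to+\infty$, and $\tilde\xi(\tau):=\xi(t(\tau))$ solves the genuinely autonomous equation $\tfrac{d\tilde\xi}{d\tau}=\theta^\varepsilon(\tilde\xi)$. On this autonomous problem the argument is classical: since $\theta^\varepsilon<0$ on $I$ the trajectory $\tilde\xi(\tau)$ is strictly monotone and, being confined to the bounded interval $I$, must converge to a zero of $\theta^\varepsilon$; by the sign hypothesis together with $\theta^\varepsilon(\bar\xi)=0$ this limit can only be $\bar\xi$, and ${\theta^\varepsilon}'(\bar\xi)<0$ yields the exponential rate $|\tilde\xi-\bar\xi|\lesssim e^{{\theta^\varepsilon}'(\bar\xi)\tau}$. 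Undoing the reparametrization gives $\xi(t)=\tilde\xi(\tau(t))\to\bar\xi$ as $t\to+\infty$, with a rate $\beta^\varepsilon$ comparable to $|{\theta^\varepsilon}'(\bar\xi)|$, in agreement with the main theorems.

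The point requiring the most care is the global-in-time validity of the estimate for $g$. Theorem \ref{thm:metaL} delivers the representation $v=z+R$ and the bound \eqref{boundw} only for $t\leq T^\varepsilon$, so strictly one obtains $\dot\xi=\theta^\varepsilon(\xi)g(t)$ with a controlled $g$ only on that interval. I would close this gap by a continuation argument: on $[0,T^\varepsilon]$ the estimates keep $g$ uniformly positive and force $\xi$ to move monotonically while remaining inside $I$, so the hypotheses of Theorem \ref{thm:metaL} are not violated and its conclusion can be re-applied to extend the bound past $T^\varepsilon$; since $T^\varepsilon\to+\infty$ as $\varepsilon\to0$ and the estimate \eqref{boundw} is uniform in time on each such interval, the smallness of $R$ propagates for all $t\geq0$. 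Securing this uniform-in-time propagation is, in my view, the main obstacle; once it is in place, the time-change reduction makes the convergence essentially immediate.
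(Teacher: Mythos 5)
Your proposal is correct and reaches the same conclusion, but it executes the reduction differently from the paper. The paper's proof is a short asymptotic computation: it writes $\frac{d\xi}{dt}=\theta^\varepsilon(\xi)(1+r(t))$ with $|r(t)|\leq C\bigl(|v_0|_{{}_{L^2}}e^{\Lambda_2^\varepsilon t}+|\Omega^\varepsilon|_{{}_{L^\infty}}\bigr)$, replaces $\theta^\varepsilon(\xi)$ by its linearization ${\theta^\varepsilon}'(\bar\xi)(\xi-\bar\xi)$, and separates variables to get $\int_{\xi_0}^{\xi(t)}\frac{d\eta}{\eta-\bar\xi}\sim{\theta^\varepsilon}'(\bar\xi)\,t$, treating the factor $(1+r)$ as effectively $1$; the conclusion $\xi(t)\sim\bar\xi+\xi_0 e^{-\beta^\varepsilon t}$ with $\beta^\varepsilon=-{\theta^\varepsilon}'(\bar\xi)$ is stated ``for any $t$ under consideration.'' You instead (i) derive $\theta^\varepsilon(\bar\xi)=0$ explicitly from {\bf H1} via $|\theta^\varepsilon(\xi)|\leq C\,\omega^\varepsilon(\xi)|\xi-\bar\xi|$ -- a fact the paper uses only implicitly; (ii) exploit the multiplicative structure $\dot\xi=\theta^\varepsilon(\xi)g(t)$ with $g\in[1/2,3/2]$ to perform an exact time reparametrization $\tau(t)=\int_0^t g(s)\,ds$, reducing to the genuinely autonomous flow $\dot{\tilde\xi}=\theta^\varepsilon(\tilde\xi)$; and (iii) conclude by the standard monotone-trajectory argument plus linearization at $\bar\xi$. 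This buys rigor the paper's ``$\sim$'' computation lacks: the time change handles the perturbation exactly rather than dropping it. Two remarks. First, the point you flag as the main obstacle -- that Theorem \ref{thm:metaL} controls $v$ (hence $g$) only on $[0,T^\varepsilon]$, while the proposition asserts convergence as $t\to+\infty$ -- is a genuine issue, but it is one the paper's own proof silently elides; your continuation sketch (restarting the theorem with the improved smallness of $v$ and of $\Omega^\varepsilon(\xi(t))\leq\omega^\varepsilon|\xi(t)-\bar\xi|$ at later times) is plausible and more than the paper offers, though you correctly note it is not fully carried out. Second, be aware that the hypothesis $\theta^\varepsilon<0$ on all of $I$ is literally incompatible with $\theta^\varepsilon(\bar\xi)=0$ unless $\bar\xi$ lies on the boundary of $I$ -- which is exactly the situation in the Burgers--Sivashinsky application of Section 5, where $\bar\xi=0$ and ${\theta^\varepsilon}'(0)=-\varepsilon\ell$; your step ``the monotone limit must be a zero of $\theta^\varepsilon$, hence $\bar\xi$'' quietly and correctly accommodates this boundary-equilibrium reading, which is the one the paper intends.
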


\begin{proof}
For any initial datum $\xi_0$, the variable $\xi(t)$ solves an equation of the form
\begin{equation*}
	\frac{d\xi}{dt}= \theta^\varepsilon(\xi)(1+r(t))
	\qquad \textrm{with}\quad 
	|r(t)|\leq C(|v_0|_{{}_{L^2}}e^{\Lambda_2^\varepsilon t}+|\Omega^\varepsilon|_{{}_{L^\infty}}).
\end{equation*}
Therefore, by means of the standard method of separation of variables  and since $\theta^\varepsilon(\xi) \sim {\theta^\varepsilon}'(\bar \xi) (\xi -\bar \xi) $, we get
 \begin{equation*}
 \int_{\xi_0}^{\xi(t)} \frac{d\eta}{\eta-\bar \xi} \sim  {\theta^\e}'(\bar \xi) \, t,
 \end{equation*}
that is, $\xi$ converges to $\bar \xi$ as $t\to+\infty$ and the convergence is exponential,
in the sense that there exists $\beta^\varepsilon>0$ such that
\begin{equation}\label{zetaestimate}
	\xi(t) \sim \bar \xi + \xi_0e^{-\beta^\varepsilon t}, \quad \beta^\e = -{\theta^\e}'(\bar \xi)
\end{equation}
for any $t$ under consideration.
\end{proof}

Estimate \eqref{zetaestimate} shows the exponentially slow motion of the interface
for small $\varepsilon$. Indeed, its evolution towards the equilibrium
position is much slower as $\varepsilon$ becomes smaller, since
$\beta^\varepsilon \to 0$ as $\varepsilon \to 0$.

More precisely, the location of the interface $\xi(t)$ remains close to some non equilibrium value for a time $T^\e$ that can be extremely long when $\e$ is small, before converging to its stable configuration, corresponding to one of the wall $x = 0$ or $x=\ell$.

\subsection{Nonlinear metastability for a convection-reaction-diffusion equation}

The general result presented in Theorem \ref{thm:metaL} concerns with the reduced system \eqref{qulinsys}, obtained by disregarding higher order terms in the variable $v$ but still keeping the nonlinear dependence on $\xi$. In the last part of this section we mean to analyze the complete system for the couple $(\xi,v)$, that is
\begin{equation}\label{CS}
 	\left\{\begin{aligned}
	\frac{d\xi}{dt}=\theta^\varepsilon(\xi)\bigl(1+\langle\partial_{\xi} \psi^\varepsilon_1, v \rangle\bigr)
		+ \rho^\varepsilon[\xi,v]), \\
	\partial_t v= H^\varepsilon(x;\xi)
		+ ({\mathcal L}^\varepsilon_\xi+{\mathcal M}^\varepsilon_\xi)v
			+\mathcal{R}^\varepsilon[v,\xi],
 	\end{aligned}\right. 
\end{equation}
where 
\begin{equation*}
\begin{aligned}
\rho^\e[\xi,v]&:= \langle \psi^\e_1, \mathcal Q^\e[v,\xi] \rangle \bigl(1+\langle\partial_{\xi} \psi^\varepsilon_1, v \rangle\bigr)+ \langle \psi^\e_1, \mathcal P^\e[U^\e]+ \mathcal Q^\e[v,\xi] \rangle R(v), \\\quad 
 \mathcal{R}^\varepsilon[v,\xi]&:=\mathcal{Q}^\varepsilon[v,\xi]
								-\partial_{\xi}U^{\varepsilon}(\cdot;\xi)\,\rho^\varepsilon[\xi,v].
								\end{aligned}
\end{equation*}
In particular, recalling that
\begin{equation*}
\mathcal Q^\e[v,\xi]:= \frac{1}{2} \left\{  -\d_x \left( f''(U^\e) v^2  \right)+ f'''(U^\e)v^2\right\}.
\end{equation*}
we have
\begin{equation*}
|\mathcal Q^\e[v,\xi]|_{L^1} \leq C |v|^2_{H^1}.
\end{equation*}
Thus, in order to proceed with the same methodology implemented in the proof of Theorem \ref{thm:metaL}, we also need an estimate for the $L^2$ norm of the space derivative of the component $v$.

\begin{theorem}\label{thm:metaNL}
Let hypotheses {\bf H1-2} be satisfied and let us denote by $(\xi,v)$ the solution to the initial-value problem \eqref{CS}, with
\begin{equation*}
\xi(0)=\xi_0 \ \in \ (0,\ell) \quad {\rm and} \quad v(x,0)=v_0(x) \ \in \ H^1(I),
\end{equation*}
Then,  for $\varepsilon$ sufficiently small, there exists a time $T^\varepsilon \geq 0$, such that, for any $t \leq T^\varepsilon$, the solution $v$ can be represented as
\begin{equation*}
v=z+R,
\end{equation*}
where $z$ is defined by
\begin{equation*}
	z(x,t):=\sum_{k\geq 2} v_k(0)\exp\left(\int_0^t \lambda^\varepsilon_k(\xi(\sigma))\,d\sigma\right)
		\,\varphi^\varepsilon_k(x;\xi(t)),
\end{equation*}
and the remainder $R$ satisfies the estimate
\begin{equation}\label{boundresto}
	|R|_{{}_{H^1}}\,\leq C\,
		\left\{ \varepsilon^\delta \, \exp\left( \int_0^t\lambda_1^\varepsilon(\xi(\sigma))d\sigma \right)|v_0|^2_{{}_{H^1}}+\varepsilon^{\gamma-\delta}\right\},
\end{equation}
for some constant $C>0$ and for some $\delta \in (0,\gamma)$, $\gamma>0$. Furthermore, the final time $T^\varepsilon$ can be chosen of order $1/\e^{\alpha}$, for some $\alpha>0$.
\end{theorem}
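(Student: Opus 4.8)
The plan is to follow closely the argument proving Theorem \ref{thm:metaL}, now retaining the genuinely nonlinear contributions $\rho^\varepsilon[\xi,v]$ and $\mathcal R^\varepsilon[v,\xi]$ of the complete system \eqref{CS}, and upgrading every $L^2$ estimate to an $H^1$ one. As before, I would expand $v=\sum_j v_j(t)\,\varphi^\varepsilon_j(\cdot;\xi(t))$ and derive the coefficient equations $\frac{dv_k}{dt}=\lambda^\varepsilon_k(\xi)\,v_k+\langle\psi^\varepsilon_k,F\rangle$, where now the forcing $F$ carries, in addition to $H^\varepsilon$ and the $\mathcal M^\varepsilon_\xi$-type terms already present in the linear analysis, the quadratic piece coming from $\mathcal R^\varepsilon[v,\xi]=\mathcal Q^\varepsilon[v,\xi]-\partial_\xi U^\varepsilon\,\rho^\varepsilon[\xi,v]$. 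Imposing $v_1(0)=0$ through \eqref{Idata} and using the renormalization $\langle\psi^\varepsilon_1,\partial_\xi U^\varepsilon\rangle=1$ together with \eqref{antisim}, the cancellation $\langle\psi^\varepsilon_1,a_j\rangle=0$ again leaves the $k=1$ equation driven only by quadratic contributions in $v$, so I would write Duhamel representations for $v_1$ and for $v_k$, $k\geq 2$, exactly as in \eqref{reprform}, with the same definition of $z$.

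The essential new point is that the quadratic term $\mathcal Q^\varepsilon[v,\xi]=\tfrac12\{-\partial_x(f''(U^\varepsilon)v^2)+f'''(U^\varepsilon)v^2\}$ carries a spatial derivative of $v^2$, so only the bound $|\mathcal Q^\varepsilon[v,\xi]|_{L^1}\leq C|v|^2_{H^1}$ is available and the estimate cannot close in $L^2$ alone: one must propagate the full $H^1$ norm of $v$. To this end I would exploit the parabolic smoothing of the stable part of the flow generated by $\mathcal L^\varepsilon_\xi$. Estimating $|\langle\psi^\varepsilon_k,\mathcal Q^\varepsilon\rangle|\leq C|v|^2_{H^1}$ and reconstructing $v-z$ in $H^1$, each mode is now weighted by $|\lambda^\varepsilon_k|^{1/2}\sim k$, so that the series $\sum_{k\geq 2}(1+|\lambda^\varepsilon_k|)E_k(s,t)$ replaces $\sum_{k\geq 2}E_k(s,t)$ and, using the decay $\lambda^\varepsilon_k(\xi)\leq -Ck^2$, produces a kernel with a stronger but still integrable singularity $C(t-s)^{-\sigma}E_2(s,t)$, $\sigma\in(0,1)$. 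The time integrals $\int_0^t(t-s)^{-\sigma}e^{\Lambda^\varepsilon_2(t-s)}\,ds\lesssim|\Lambda^\varepsilon_2|^{\sigma-1}$, together with the size $|\Lambda^\varepsilon_2|\sim\varepsilon^{-1/2}$ established in Section 5, are exactly what converts the spectral gap into the explicit powers $\varepsilon^\delta$ and $\varepsilon^{\gamma-\delta}$ appearing in \eqref{boundresto}.

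With the $H^1$ smoothing in place, I would close the argument by the bootstrap used before. Setting $N(t):=\sup_{s\in[0,t]}|v-z|_{H^1}\,E_1(s,0)$ and splitting $v=(v-z)+z$ with $|z|_{H^1}\leq C|v_0|_{H^1}e^{\Lambda^\varepsilon_2 t}$, the quadratic terms contribute a factor $N^2$ carrying a weight $\varepsilon^\delta$ from the $H^1$-smoothing time integrals, while the linear source $H^\varepsilon$ contributes a term of size $\varepsilon^{\gamma-\delta}$ through $|\Omega^\varepsilon|_{L^\infty}$. This yields an inequality $N(t)\leq A\,N^2(t)+B$ whose resolution, as in Theorem \ref{thm:metaL}, reproduces the two contributions of \eqref{boundresto}. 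The condition $4AB<1$ fixes the admissible window, and since all the nonlinear weights are only polynomially small in $\varepsilon$, the resulting $T^\varepsilon$ is of order $\varepsilon^{-\alpha}$ rather than the exponential $e^{1/\varepsilon}$ of the linear theorem.

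The main obstacle I anticipate is precisely the $H^1$ propagation through the derivative-loaded quadratic term: one must show that the loss of one derivative in $\mathcal Q^\varepsilon$ is exactly compensated by the smoothing of the stable modes, and that the resulting $\varepsilon$-dependent constants balance so as to leave a net positive power $\varepsilon^{\gamma-\delta}$. Choosing the split $\delta\in(0,\gamma)$ correctly — enough smallness in $A$ to keep the quadratic feedback subcritical over the time window, while keeping $B$ small — is the delicate quantitative heart of the proof; everything else is a transcription of the $L^2$ scheme of Theorem \ref{thm:metaL} into the $H^1$ setting.
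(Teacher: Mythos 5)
Your overall scaffolding (eigenfunction expansion, Duhamel representation, the same $z$, the bootstrap $N(t)\leq A\,N^2(t)+B$, and a polynomially long $T^\varepsilon$) matches the paper's proof, but the step on which everything hinges --- how the $H^1$ norm of $v-z$ is actually controlled --- contains a genuine gap. You propose to read off $|v-z|_{H^1}$ spectrally, weighting the $k$-th mode by $|\lambda^\varepsilon_k|^{1/2}\sim k$ and replacing $\sum_{k\geq2}E_k(s,t)$ by $\sum_{k\geq2}(1+|\lambda^\varepsilon_k|)E_k(s,t)$ with an integrable kernel $(t-s)^{-\sigma}E_2(s,t)$. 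This presupposes that the eigenbasis of $\mathcal{L}^\varepsilon_\xi$ characterizes the $H^1$ norm in the way a Fourier basis would, i.e.\ that $|\partial_x\varphi^\varepsilon_k|_{L^2}\lesssim |\lambda^\varepsilon_k|^{1/2}$ with constants uniform in $\varepsilon$ and $\xi$. Nothing in hypotheses {\bf H1--H2} provides this: the operator is non-self-adjoint, its conjugation to the self-adjoint $\mathcal{M}^\varepsilon_\xi$ involves the weight $\exp\bigl(-\tfrac{1}{2\varepsilon}\int f'(U^\varepsilon)\bigr)$, which is exponentially large in $1/\varepsilon$, and the eigenfunctions develop $\varepsilon$-dependent layers, so their $H^1$ norms need not scale like $k$ uniformly in $\varepsilon$. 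Moreover {\bf H2} gives only the one-sided bound $\lambda^\varepsilon_k\leq -Ck^2$ (and, in the Burgers--Sivashinsky case, $|\Lambda^\varepsilon_2|\sim\varepsilon^{-1/2}$, so $|\lambda^\varepsilon_k|^{1/2}$ is not of order $k$ anyway); no equivalence between eigenvalue weights and Sobolev norms is available. Without that equivalence, your smoothing kernel and the resulting powers $\varepsilon^\delta$, $\varepsilon^{\gamma-\delta}$ do not follow.

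The paper avoids this entirely by a different device: it differentiates the equation for $v$ with respect to $x$, obtaining a separate evolution equation for $y=\partial_x v$,
\begin{equation*}
\partial_t y= \mathcal L^\varepsilon_\xi y + \bar{\mathcal  M}^\varepsilon_\xi v-\partial_x\left( \partial_xU^\varepsilon \, v\right) + \partial_xH^\varepsilon+\partial_x \mathcal R^\varepsilon[\xi,v],
\end{equation*}
and expands $y$ (not a weighted version of $v$) in the \emph{same} eigenbasis, so that only $L^2$-type projections $\langle\psi^\varepsilon_k,\cdot\rangle$ are ever needed. The troublesome commutator-type term is handled by an $\varepsilon$-weighted Young inequality, $|\langle \psi^\varepsilon_k ,\partial_x( \partial_x U^\varepsilon \, v)\rangle| \leq \varepsilon^m |U^\varepsilon|^2_{L^\infty}+\varepsilon^{-m}|v|^2_{H^1}$, and the bootstrap quantity is rescaled as $N(t)=\varepsilon^{-n}\sup_s |v-z|_{H^1}E_1(s,0)$. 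The powers $\varepsilon^\delta$ and $\varepsilon^{\gamma-\delta}$ in \eqref{boundresto} then come not from parabolic smoothing but from the explicit balance of the free parameters $m,n$ against the assumed size $|\Lambda^\varepsilon_2|\sim\varepsilon^{-\gamma}$ (with the choice $m=\gamma-\delta$, $m<\gamma$), and this balance is also what deteriorates the remainder from $|\Omega^\varepsilon|_{L^\infty}$ to $\varepsilon^\delta$ and the final time from $e^{1/\varepsilon}$ to $\varepsilon^{-\alpha}$. To repair your argument you would either have to prove the uniform-in-$\varepsilon$ spectral characterization of $H^1$ you are invoking (which the structure of the problem makes doubtful), or adopt the paper's route of propagating $\partial_x v$ as an independent unknown projected on the same basis.
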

We note that the estimate \eqref{boundresto} is weaker that the corresponding formula \eqref{boundw} obtained for the reduced system, since it states that the remainder $R$ tends to $0$ as $\varepsilon^\delta$ instead of $|\Omega^\e|_{{}_{\infty}}$. Such deterioration of the estimate is a consequence of the necessity of estimating also the first order derivative and it is probably related to the specific strategy we use at such stage. We suspect that this bound is not optimal. Anyway, let us stress that this nonlinear result makes the theory much more complete.

\begin{proof}
Since the plan of the proof closely resemble the one used for proving Theorem \ref{thm:metaL}, we
propose here only the major modifications of the argument. In particular, the key point is how to handle the nonlinear terms.

 Setting as usual
\begin{equation*}
	v(x,t)=\sum_{j} v_j(t)\,\varphi^\varepsilon_j(x,\xi(t)),
\end{equation*}
we obtain an infinite-dimensional differential system for the coefficients $v_j$
\begin{equation}\label{eqwk_bis}
	\frac{dv_k}{dt}=\lambda^\varepsilon_k(\xi)\,v_k
		+\langle \psi^\varepsilon_k,F\rangle+ +\langle \psi^\varepsilon_k,G\rangle,
\end{equation}
where $F$ is defined as before as
\begin{equation*}
	F:=H^\varepsilon-\theta^\varepsilon \sum_{j}\Bigl(a_j+\sum_{\ell} b_{j\ell}\,v_\ell\Bigr)v_j,
\end{equation*}
with
\begin{equation*}
	a_j:=\langle \partial_{\xi} \psi^\varepsilon_1, \varphi^\varepsilon_j\rangle\,
			\partial_{\xi}U^{\varepsilon}+\partial_\xi \varphi^\varepsilon_j,
	\qquad
	b_{j\ell}:=\langle \partial_{\xi} \psi^\varepsilon_1, \varphi^\varepsilon_\ell\rangle
			\,\partial_\xi \varphi^\varepsilon_j.
\end{equation*}
The term $G$ comes out from the higher order terms $\rho^\varepsilon$ and $\mathcal R^\varepsilon$ and has the following expression
\begin{equation*}
G:= \mathcal Q^\varepsilon- \left(  \sum_j  \partial_\xi \varphi_j^\varepsilon v_j+\partial_\xi U^\varepsilon\right) \left\{ \frac{\langle \psi^\varepsilon_1,\mathcal Q^\varepsilon \rangle}{1-\langle\partial_\xi \psi^\varepsilon_1,v \rangle}-\theta^\varepsilon\frac{\langle \partial_\xi \psi^\varepsilon_1,v \rangle^2}{1-\langle\partial_\xi \psi^\varepsilon_1,v \rangle} \right\}.
\end{equation*}
Moreover, we have
\begin{equation*}
|\langle \psi^\varepsilon_k, G \rangle| \leq (1+|\Omega^\varepsilon|_{{}_{L^\infty}}) |v|^2_{{}_{L^2}}+ C |v|^2_{{}_{H^1}},
\end{equation*}
so that, setting
\begin{equation*}
	E_k(s,t):=\exp\left( \int_s^t \lambda_k^\varepsilon(\xi(\sigma))d\sigma\right)
\end{equation*}
and since $v_1=0$, we have the following expression for the coefficients $v_k$, $k\geq 2$
\begin{equation*}
v_k(t)= v_k(0) E_k(0,t)+ \int_0^t \left\{ \right \langle \psi^\varepsilon_k,F\rangle+ +\langle \psi^\varepsilon_k,G\rangle \} E_k(s,t) \, ds.
\end{equation*}
By introducing the function
\begin{equation*}
	z(x,t):=\sum_{k\geq 2} v_k(0)\,E_k(0,t)\,\varphi^\varepsilon_k(x;\xi(t)),
\end{equation*}
we end up with the following estimate for the $L^2$-norm of the difference $v-z$
\begin{equation*}
|v-z|_{{}_{L^2}} \leq \sum_{k \geq 2} \int_0^t \left( \Omega^\e(\xi)(1+|v|^2_{{}_{L^2}}) + |v|^2_{{}_{H^1}}\right) E_k(s,t) \, ds,
\end{equation*}
that is
\begin{equation*}
\begin{aligned}
|v-z|_{{}_{L^2}} \leq& \ C \int_0^t \Omega^\varepsilon(\xi) (t-s)^{-1/2} E_2(s,t)  ds \\
&+ \int_0^t \left( |v-z|^2_{{}_{H^1}}+ |z|^2_{{}_{H^1}}\right) (t-s)^{-1/2} E_2(s,t)  ds,
\end{aligned}
\end{equation*}
where we used
 \begin{equation*}
	\sum_{k\geq 2} E_k(s,t)
		\leq C\,(t-s)^{-1/2}\,E_2(s,t).
\end{equation*}
Now we need to differentiate with respect to $x$ the equation for $v$ in order to obtain an estimate for $|\partial_x(v-z)|_{{}_{L^2}}$. By setting $y=\partial_x v$, we obtain
\begin{equation*}
\partial_t y= \mathcal L^\varepsilon_\xi y + \bar{\mathcal  M}^\varepsilon_\xi v-\partial_x\left( \partial_xU^\varepsilon \, v\right) + \partial_xH^\varepsilon(x,\xi)+\partial_x \mathcal R^\varepsilon[\xi,v],
\end{equation*}
where
\begin{equation*}
\bar{\mathcal M}^\varepsilon_\xi v:=-\partial_{\xi x}U^{\varepsilon}(\cdot;\xi)
			\,\theta^\varepsilon(\xi)\,\langle\partial_{\xi} \psi^\varepsilon_1, v \rangle.
\end{equation*}
Hence, by setting as usual
\begin{equation*}
y(x,t)=\sum_j y_j(t) \, \varphi_j^\varepsilon (x,\xi(t)),
\end{equation*}
we have
\begin{equation*}
\frac{dy_k}{dt}=\lambda^\varepsilon_k(\xi)\,y_k
		+\langle \psi^\varepsilon_k,F^*\rangle-\langle \psi^\varepsilon_k ,\partial_x\left( \partial_x U^\varepsilon\,v\right)\rangle +\langle \psi^\varepsilon_k,\partial_x \mathcal R^\varepsilon \rangle,
\end{equation*}
where
\begin{equation*}
F^*:= \partial_x H^\varepsilon -\sum_j v_j \left \{ \theta^\varepsilon\left[ \partial_{\xi x} U^\varepsilon \langle \partial_\xi \psi^\varepsilon_1,\varphi_j^\varepsilon \rangle + \partial_\xi \varphi^\varepsilon_j\left( 1 + \sum_{\ell}v_{\ell} \langle \partial_\xi \psi^\varepsilon_1,\varphi_{\ell} \rangle \right)   \right] -\partial_\xi \varphi^\varepsilon_j \rho^\varepsilon\right \}.
\end{equation*}
Moreover, for some $m >0$, there hold
\begin{equation*}
|\langle \psi^\varepsilon_k, \partial_x \mathcal R^\varepsilon \rangle| \leq C |v|^2_{{}_{H^1}}, \quad |\langle \psi^\varepsilon_k ,\partial_x\left( \partial_x U^\varepsilon \, v\right)\rangle| \leq \varepsilon^m\, |U^\varepsilon|_{{}_{L^\infty}}^2+ \frac{1}{\varepsilon^m}\,|v|^2_{{}_{H^1}}.
\end{equation*}
Again, by integrating in time and by summing on $k$, we end up with
\begin{equation*}
\begin{aligned}
|y-\partial_x z|_{{}_{L^2}} \leq&\, C \int_0^t \left \{ \Omega^\varepsilon(\xi) (1+ |v|^2_{{}_{L^2}}) +\left(1+\frac{1}{\varepsilon^m}\right)\, |v|^2_{{}_{H^1}} + \varepsilon^m \, |U^\varepsilon|^2_{{}_{L^\infty}} \right \} \, E_1(s,t) ds \\
&+C \int_0^t \left \{ \Omega^\varepsilon(\xi) (1+ |v|^2_{{}_{L^2}}) +\left(1+\frac{1}{\varepsilon^m}\right)\, |v|^2_{{}_{H^1}} + \varepsilon^m \, |U^\varepsilon|^2_{{}_{L^\infty}} \right \} \, \sum_{k \geq 2} E_k(s,t) ds.
\end{aligned}
\end{equation*}
Now, given $n>0$, let us set
\begin{equation*}
	N(t):= \frac{1}{\varepsilon^n} \sup_{s\in[0,t]} |v-z|_{{}_{H^1}}\,E_1(s,0),
\end{equation*} 
so that we have
\begin{equation}\label{est1}
\begin{aligned}
\frac{1}{\varepsilon^n}E_1(t,0)|v-z|_{{}_{L^2}} \leq& \ C \int_0^t \frac{\Omega^\varepsilon(\xi)}{\varepsilon^n} (t-s)^{-1/2} E_2(s,t) E_1(s,0) ds \\
&+ \int_0^t \frac{1}{\varepsilon^n}\left( |v-z|^2_{{}_{H^1}}+ |z|^2_{{}_{H^1}}\right) (t-s)^{-1/2} E_2(s,t) E_1(s,0) ds
\end{aligned}
\end{equation}
and
\begin{equation}\label{est2}
\begin{aligned}
&\frac{1}{\varepsilon^n}E_1(t,0)|y-\partial_x z|_{{}_{L^2}} \leq  
C \int_0^t \frac{\Omega^\varepsilon(\xi)}{\varepsilon^n}  \left \{E_1(s,0)+(t-s)^{-1/2} E_s(s,t)\, E_1(s,0)\right \} \,  ds\\
& +C \int_0^t \left \{ \left(\frac{1}{\e^n}+\frac{1}{\e^{n+m} } \right)\,\left( |v-z|^2_{{}_{H^1}} + |z|^2_{{}_{H^1}}\right)+\frac{1}{\varepsilon^{n-m}} \, |U^\varepsilon|^2_{{}_{L^\infty}} \right \} \, E_1(s,0) ds \\
& +C \int_0^t\left \{ \left(\frac{1}{\e^n}+\frac{1}{\e^{n+m}} \right)\!\!\left( |v-z|^2_{{}_{H^1}} \!+\! |z|^2_{{}_{H^1}}\right)\!+\!\frac{1}{\e^{n-m}} \, |U^\varepsilon|^2_{{}_{L^\infty}} \right \} (t-s)^{-1/2} E_s(s,t) E_1(s,0) ds. \\
\end{aligned}
\end{equation}
By summing \eqref{est1} and \eqref{est2} and since there hold
\begin{align*}
	&\int_0^t e^{(2\Lambda^\varepsilon_2-\Lambda_1^\varepsilon)s}\,ds
		\leq \int_0^t e^{\Lambda^\varepsilon_2 s}\,ds
		=\frac{1}{\Lambda_2^\varepsilon}(e^{\Lambda^\varepsilon_2 s}-1)\leq  \frac{1}{|\Lambda_2^\varepsilon|}, 
		\\
	&\int_0^t (t-s)^{-1/2}\,E_2(s,t)\,ds
		\leq \int_0^t (t-s)^{-1/2}\,e^{\Lambda_2^\varepsilon\,(t-s)}\,ds
		\leq \frac{1}{|\Lambda_2^\varepsilon|^{1/2}}, 
\end{align*}
we end up with the estimate $N(t) \leq A N^2(t)+B$, with
 \begin{equation*}
		\left\{\begin{aligned}
		A&:= \varepsilon^{n-m} \, E_1(0,t) (t + |\Lambda_2^\varepsilon|^{-1/2}) ,\\
		B&:=C|\Omega^\varepsilon|_{{}_{L^\infty}}E_1(t,0) \left( t + |\Lambda_2^\varepsilon|^{-1/2} \right) \\ & \quad + \varepsilon^{-n-m} |\Lambda_2^\varepsilon|^{-1} |v_0|^2_{{}_{H^1}}+ \varepsilon^{m-n} |U^\varepsilon|^2_{{}_{L^\infty}} E_1(t,0)\, (t +|\Lambda_2^\varepsilon|^{-1/2}).
		\end{aligned}\right.
\end{equation*}
Supposing that $|\Lambda_2^\varepsilon| \sim \e^{-\gamma}$ for some $\gamma >0$, if we require $m < \gamma$, for all $n>0$ there holds $N(t)< 2B$ that is
\begin{equation}\label{finnonlin}
|v-z|_{{}_{H^1}} \leq C |\Omega^\varepsilon|_{{}_{L^\infty}} + \Bigl( \varepsilon^{\gamma-m} |v_0|^2_{{}_{H^1}}E_1(0,t)+\e^m\Bigr).
\end{equation}
Precisely,  we can choose $m=\gamma-\delta$, for some $\delta \in (0,\gamma)$. Finally, providing  $m>n$, we can choose the final time $T^\varepsilon$ of order $\mathcal O(\e^{-\alpha})$ for some $0<\alpha<1$. Now the proof is completed.

\end{proof}

Estimate \eqref{finnonlin} can now be used to decouple the complete system \eqref{CS} more precisely, under the hypotheses of Proposition \ref{prop:slowmotion} on the function $\theta^\varepsilon(\xi)$, we get
\begin{equation*}
\frac{d\xi}{dt}= \theta^\varepsilon(\xi)(1+r)+ \rho^\varepsilon,
\end{equation*}
where
\begin{equation*}
\begin{aligned}
|r| \leq |v_0|^2_{{}_{H^1}} e^{-c\,t} + (\varepsilon^\delta + \varepsilon^{\gamma-\delta}) \quad {\rm and} \quad |\rho^\varepsilon| \leq C\, |v|_{{}_{H^1}}^2 \leq |v_0|^2_{{}_{H^1}}e^{-ct} + \varepsilon^\delta + \varepsilon^{\gamma-\delta}.
\end{aligned}
\end{equation*}
Hence,  there exists $\beta^\varepsilon >0$, $\beta^\varepsilon \to 0$ as $\varepsilon \to 0$, such that
\begin{equation*}
|\xi-\bar \xi| \leq |\xi_0|e^{-\beta^\varepsilon t} 
+ \left \{  |v_0|^2_{{}_{H^1}}e^{-ct} + \varepsilon^\delta + \varepsilon^{\gamma-\delta} \right\} e^{-\beta^\varepsilon t},
\end{equation*}
showing the exponentially (slow) motion of the position of the interface towards its equilibrium location $\bar \xi$.

\section{Application to the Burgers-Sivashinsky equation}\label{Sec:example}

The aim of this Section it to apply the general theory developed in the previous Sections to the specific example of the so called Burgers-Sivashinsky equation, that is
\begin{equation}\label{BSexample}
\d_t u= \e \d^2_x u -u \d_x u + u,
\end{equation}
complemented with boundary conditions and initial datum
\begin{equation}\label{IBV}
u(0,t)= u (\ell,t)=0 \ \ \  t \geq 0, \quad {\rm and} \quad u(x,0)=u_0(x) \ \ \ x \in I.
\end{equation}
More precisely, we mean to show that the hypotheses of Theorem \ref{thm:metaL} and Theorem \ref{thm:metaNL} are satisfied in this specific case.
\vskip0.2cm
To begin with, let us consider the case $\e=0$: equation \eqref{BSexample} formally reduces to the first order hyperbolic equation 
\begin{equation}\label{BShyp}
\d_t u=-u\d_x u+ u,
\end{equation}
together with boundary and initial conditions. In this case, stationary solutions solve the first order equation
\begin{equation*}
\frac{d}{dx} \left( \frac{u^2}{2} \right) = u \quad \Longrightarrow \quad \frac{du}{dx}=1,
\end{equation*}
where the boundary conditions have to be interpreted in the sense of \cite{BardLeRoNede79}.
We concentrate on entropy stationary solutions with at most one internal jump. Hence, we can construct a two parameters family of steady states, solutions to \eqref{BShyp}, defined as
\begin{equation*}
U_{{}_{\rm hyp}}(x):=\left\{\begin{aligned}
&x-\xi_1 &\quad 0 \leq & \ x < (\xi_1+\xi_2)/2 \\
&x-\xi_2 &\quad    (\xi_1+\xi_2)/2 < & \ x \leq \ell
\end{aligned}\right.
\end{equation*}
where the parameters $(\xi_1,\xi_2) $ belong to the triangle $T:=\{ (\xi_1,\xi_2) \ : \ 0 \leq \xi_1 \leq \xi_2 \leq \ell \}$. In particular, it is possible to distinguish several different regions of the triangle, corresponding to different types of steady states with different properties.

\vskip0.1cm
{\bf 1.} When $(\xi_1,\xi_2)$ belongs to the interior of the triangle $ \bar T:=\{ (\xi_1,\xi_2) \ : \ 0 < \xi_1 < \xi_2 < \ell \}$, the corresponding solution has two boundary layers and a single jump inside the interval $(0,\ell)$.
\vskip0.2cm
{\bf 2.} When $(\xi_1,\xi_2)$ belong to one of the sides $\G_1 := \{ (\xi_1,\xi_2) \ : \ \xi_1=0, \ 0 < \xi_2 < \ell\}$ and $\G_2:= \{ (\xi_1,\xi_2) \ : \  0 < \xi_1 < \ell, \ \xi_2=\ell\}$, the corresponding steady state have one internal jump and one boundary layer.
\vskip0.2cm
{\bf 3.} When $(\xi_1,\xi_2)$ belongs to the diagonal side $D:= \{ (\xi_1,\xi_2) \subset T \ : \  \xi_1=\xi_2 \}$, the steady $U_{{}_{\rm hyp}}$ has two boundary layers an no jump discontinuities. It is the hyperbolic version of $U^\e_{{}_{M}}$, and we named here $U^0_{{}_{M}}$.
\vskip0.2cm
{\bf 4.} When $(\xi_1,\xi_2)=(0,\ell)$, the steady state has a single internal jump in the middle point of the interval, and no boundary layers.  We call it here $U^0_{{}_{NS}}$,  the rougher version of $U^\e_{{}_{NS}}$.

\vskip0.2cm
{\bf 5.} Finally, when $(\xi_1,\xi_2)$ coincide with one of the vertex $(0,0)$ or $(\ell,\ell)$, the corresponding solutions have one boundary layer (at the right side and at the left side of the interval respectively), and no jump discontinuities. Such solutions are the hyperbolic version of $U_{\e,+}$ and $U_{\e,-}$ respectively, and we refer to them as $U_{0,\pm}$.
\vskip0.2cm
\noindent Hence, at the level $\e =0$, we have infinitely many steady states with different properties (see Figures \ref{fig4} and \ref{fig5}). 
\begin{figure}
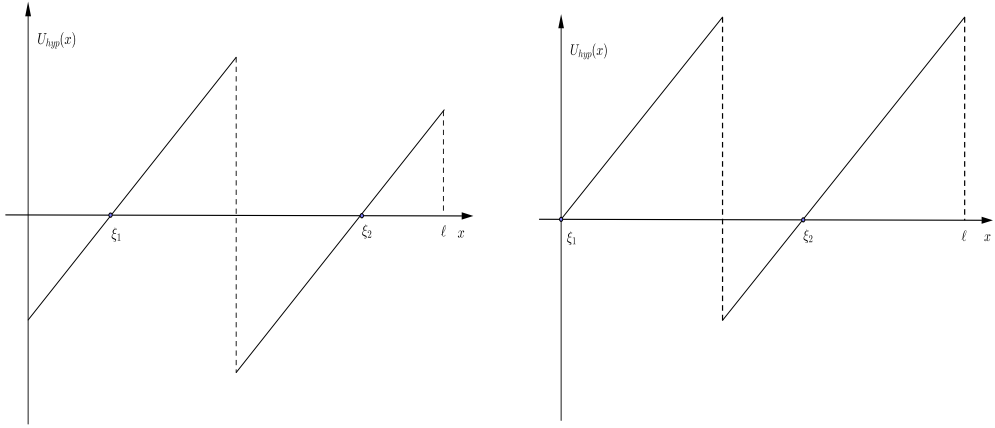

\centering
\includegraphics[width=6.5cm,height=6cm]{SoluzStaz1}
 \hspace{3mm}
\includegraphics[width=6.5cm,height=6cm]{SoluzStaz2}
\caption{\small{Stationary solutions to $\d_t u=-u \d_x u+u$ corresponding  to the cases {\bf 1 -2} described above.}}\label{fig4}
\end{figure}
 \begin{figure}
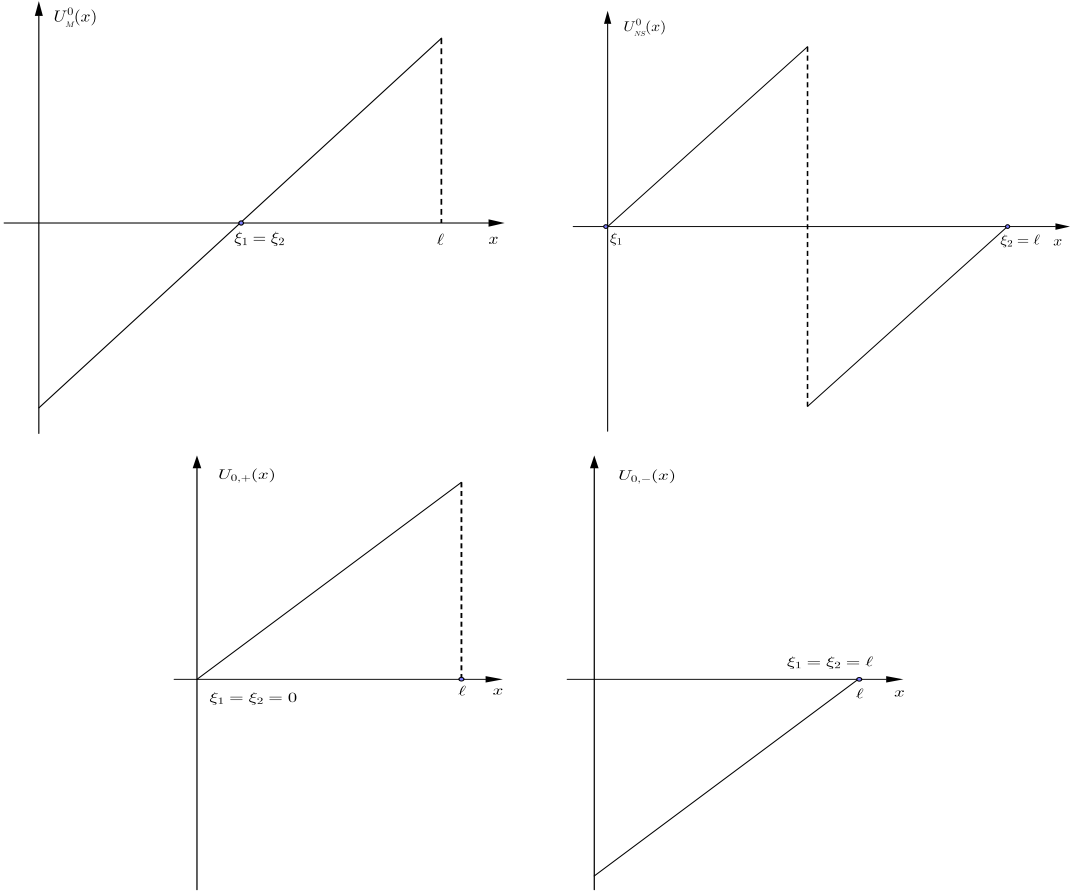

\includegraphics[width=7cm,height=6cm]{SoluzStaz3new}
 \hspace{3mm}
\includegraphics[width=7cm,height=6cm]{SoluzStaz4new}
\hspace{5mm}
\includegraphics[width=10cm,height=6cm]{SoluzStaz5}
\caption{\small{Stationary solutions to $\d_t u=-u \d_x u+u$ for different choices of the parameters $\xi_1, \xi_2 \in T$; the pictures correspond, respectively, to $U^0_{{}_{M}}$, $U^0_{{}_{NS}}$ and  $U_{0,\pm}$. In particular, $U^0_{{}_{M}}$ is the hyperbolic version of the metastable steady state $U^\e_{{}_{M}}$.}}\label{fig5}
\end{figure}

\vskip0.2cm
When $\e>0$, the presence of the Laplace operator has a smoothing effect on stationary solutions: all the jump discontinuities turn out into a viscous shock. It is possible to obtain an implicit expression for the stationary solutions to \eqref{BSexample}, that solve
\begin{equation*}
\frac{d}{dx}\left( \frac{u^2}{2}-\e\frac{du}{dx}\right)=u \ \ \ \Longleftrightarrow \quad
 \left\{ \begin{aligned}    
 \e \frac{du}{dx}  &= \frac{u^2}{2}-v \\
\frac{dv}{dx}&= u.
 \end{aligned}\right.
\end{equation*}
If we look for a solution of the form $u=\sigma(v)$, we get the following Bernoulli equation
\begin{equation*}
\frac{d\sigma}{dv}= \frac{du}{dx} \frac{dx}{dv}=\frac{1}{2\e} \sigma-\frac{1}{\e \sigma} v,
\end{equation*}
that can be solved by means of the standard change of variable $\omega=\sigma^2$. We deduce the following equation for $\omega$
\begin{equation}\label{ustazimpl}
\frac{d\omega}{dv}- \frac{\omega}{\e}= -\frac{2v(x)}{\e} \ \ \ \Longleftrightarrow \ \ \ \frac{d}{dv} \left( e^{-v/\e} \omega\right)= \frac{d}{dv} \left( 2(\e+v) e^{-v/\e}\right),
\end{equation}
whose solution is given by
\begin{equation*}
\frac{1}{2} u^2= \e + v+ \kappa \, e^{v/\e}, \quad \frac{dv}{dx}=u
\end{equation*}
for some $\kappa \in \R$. 
We are interested in stationary solutions corresponding, when $\e >0$, to the steady states $U_{\e,\pm}(x)$ (case {\bf 5}), $U^\e_{{}_{NS}}(x)$ (case {\bf 4}) and, specially,  $U^\e_{{}_{M}}(x)$ (case {\bf 3}), the one that gives rise to a metastable behavior. In this cases, stationary solutions to \eqref{BSexample} can be seen as a smoothed version  of the states $U_{0,\pm}$, $U^0_{{}_{NS}}$ and of the state $U^0_{{}_{M}}$ with the choice $\xi_1=\xi_2=\ell/2$.

\vskip0.3cm
\subsection{The family of approximate steady states and the remainder $\mathcal P^\e[U^\e]$}
Following the general approach introduced in Section \ref{sect:general}, the first step is the construction of a one parameter family of functions $\{U^\e(x;\xi)\}_{\xi \in I}$ that approximate the metastable steady state $U^\e_{{}_{M}}(x)$. There are several possible choices to build up such family. One idea is to define $U^\e(x;\xi)$ as a smoothed version of the hyperbolic steady state $U^0_{{}_{M}}(x)$. The smoothing consists in substituting a viscous shock-like version of the layers in a small neighborhood of the transition point, based on the explicit formula
\begin{equation*}
U^\e_{{}_{\rm vsc}}(x):= - u_{*} \ \tanh \left( \frac{u_*(x-a_0)}{2\e}\right),
\end{equation*}
that describes a viscous shock wave centered in $a_0$ connecting the left state $u_*$ and the right state $-u_*$. Hence we define
\begin{equation}\label{approx1}
U^\e(x;\xi) = \max \left\{ \min \left\{ x-\xi , -(\ell-\xi) \tanh \left(\frac{\ell-\xi}{2\e} \, (x-\ell) \right) \right\}, -\xi \tanh \left( \frac{\xi}{2\e} \, x\right) \right \},
\end{equation}
where the parameter $\xi \in I$ represents the unique zero of the function $U^\e$ (see Figure \ref{fig5}). 

With such a construction, $ U^\e $ matches exactly the boundary conditions and satisfies the stationary equation up to an error, denoted here as $\mathcal P[U^\e]$. 

Moreover, for fixed $\xi \in I$, each element of the family $\{ U^\e(x;\xi) \}$ converges, as $\e \to 0$, to $U^0_{{}_{M}}(x)$, defined as
\begin{equation*}
U^0_{{}_{M}}(x)= x-\xi, \ \ \ \xi \in (0,\ell), \quad U^0_{{}_{M}}(0)=U^0_{{}_{M}}(\ell)=0.
\end{equation*}

Once the family of approximate steady states is explicitly given, next step is to compute the error $\mathcal P^\e[U^\e]$. We recall here that, in order to compute the remainder $\mathcal P^\e[U^\e]$, we need a explicit expression for the term $\Omega^\e(\xi)$, defined as
\begin{equation*}
|\langle \psi(\cdot ), \mathcal P^\e[U^\e(\cdot;\xi)]\rangle| \leq \Omega^\e(\xi)|\psi|_{L^\infty}, \quad \forall \, \psi \in C(I), \, \forall \, \xi \in I.
\end{equation*}
Given $U^\e$ as in \eqref{approx1}, we check that $U^\e$ is an exact stationary solution for \eqref{BSexample}, outside the intervals $(0,u_1^*)$ and $(u_2^*,\ell)$, where $u_1^*$ and $u_2^*$ are implicitly defined as
\begin{equation}\label{lauso}
\begin{aligned}
-\xi \tanh \left( \frac{\xi}{2\e} \, u_1^*\right)=u_1^*-\xi, \qquad
-(\ell-\xi) \tanh \left(\frac{\ell-\xi}{2\e} \, (u_2^*-\ell) \right)=u_2^*-\xi.
\end{aligned}
\end{equation} 
From \eqref{lauso}, since $u_1^* \sim 0$ and $u_2^* \sim \ell$, we get
\begin{equation*}
u_1^* \sim \varepsilon \, \xi \ \ \ {\rm and} \ \ \ u_2^* \sim \ell -\varepsilon \, \xi \ \ \ \ \ {\rm as} \ \ \ \e \to 0,
\end{equation*}
where we used the Taylor expansion of $\tanh (x)$ for $x \sim 0$.
\begin{figure}
\centering
\includegraphics[width=12cm,height=7cm]{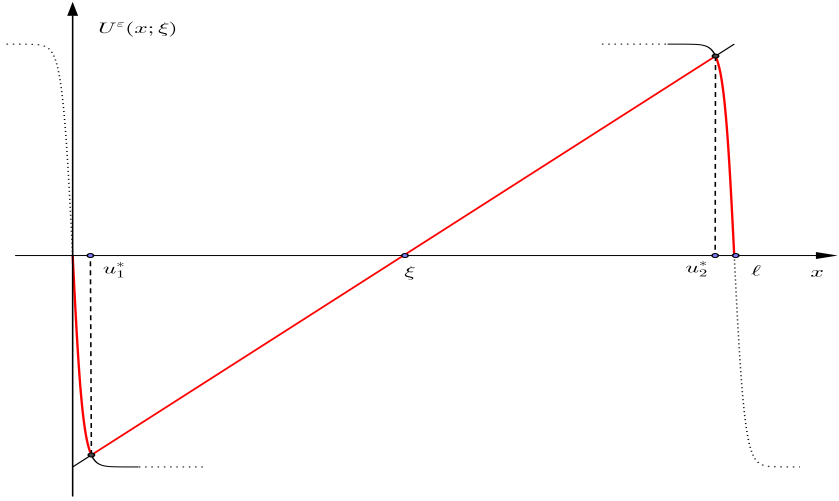}
\caption{\small{The element $U^\e$ of the family of approximate stationary solutions obtained as a smoothed version of the hyperbolic steady state $U^0_M(x)$.}}\label{fig5}
\end{figure}
Now we can derive an asymptotic expression for the term $\Omega^\e(\xi)$. Indeed we get
\begin{equation*}
\begin{aligned}
|\langle \psi(\cdot ), \mathcal P^\e[U^\e(\cdot;\xi)]\rangle| &\leq |\psi|_{{}_{L^\infty}} \left( \int_0^{u_1^*} \left | \xi \tanh \left( \frac{\xi}{2\e} \, x\right) \right |  \, dx  + \int_{u^*_2}^\ell \left|   (\ell-\xi) \tanh \left(\frac{\ell-\xi}{2\e} \, (x-\ell) \right) \right| \, dx\right) \\
&\leq  |\psi|_{{}_{L^\infty}}  |U^\e|_{{}_{L^1(0,u_1^*)}} + |\psi|_{{}_{L^\infty}}  |U^\e|_{{}_{L^1(u_2^*,\ell)}} \\
& \leq |\psi|_{L^\infty} \left[ \xi  \, u_1^*+ (\ell-\xi) (\ell-u_2^*) \right],
\end{aligned}
\end{equation*}
and we deduce
\begin{equation*}
\Omega^\varepsilon(\xi) \sim \xi^2 \, \varepsilon + (\ell -\xi) \, \xi \, \varepsilon,
\end{equation*}
showing that $\Omega^\e$ is null at $\xi=0$ and small with respect to $\e$. We can also define 
\begin{equation*}
\omega^\e(\xi) \sim \xi \, \varepsilon + (\ell -\xi) \, \varepsilon,
\end{equation*}
where $ \Omega^\e(\xi) \leq \omega^\e(\xi) \xi$, meaning that, when $ \xi=0$, the element $U^\e(x;0)$ solves the stationary equation. Indeed $U^\e(x;0)$,  defined as
\begin{equation*}
U^{\e}(x;0) := \min \left\{ x , -\ell \tanh \left(\frac{\ell}{\e}\, (x-\ell) \right) \right\} 
\end{equation*}
is an approximation (up to an error that is  small in $\e$) for the stable stationary solution $U_{\e,+}(x)$. 

\vskip0.2cm
It is also possible to derive an explicit expression for the leading order term in the equation of motion for $\xi(t)$, defined as
\begin{equation*}
 \theta^\e(\xi)=\frac{\langle \psi^\varepsilon_1,{\mathcal P^\e[U^{\varepsilon}] \rangle}}{\langle \psi^\e_1, \d_\xi U^\e\rangle}.
 \end{equation*}
Indeed, with the approximation $U^\e(x,\xi) \sim U_{{}_{hyp}}(x;\xi) =x-\xi$, we get $\partial_\xi U^\varepsilon =-1$, so that
\begin{equation*}
\langle \psi^\e_1,P^\e[U^\e]\rangle \sim (\xi^2 \, \varepsilon + (\ell -\xi) \, \xi \, \varepsilon) \, \int_{I} \psi_1^\e(x;\xi) \, dx \quad {\rm and } \quad  \langle \psi^\e_1, \d_\xi U^\e \rangle \sim -\int_I \psi^\e_1(x;\xi )\,dx,
\end{equation*}
and
\begin{equation}\label{thetaasy}
\theta^\e(\xi) \sim -\xi^2 \, \varepsilon - (\ell -\xi) \, \xi \, \varepsilon.
\end{equation}
Since, formally, for small $\varepsilon$ and small $v$, the dynamics of the parameter $\xi$ is
approximately given by 
\begin{equation*}
	\frac{d\xi}{dt}=\theta^\varepsilon(\xi), 
\end{equation*}
formula \eqref{thetaasy} shows that the speed of the interface is small with respect to $\e$. Finally,  $ \theta^\e(\xi) <0$ for all $\xi \in I$, and it is easy to check that
\begin{equation*}
{\theta^\e}'(0) = -\varepsilon \,\ell<0.
\end{equation*}
Hence, from \eqref{zetaestimate} (see Proposition \ref{prop:slowmotion}), we get
\begin{equation*}
 \xi(t) \sim \xi_0 e^{-\beta^\e t}, \quad {\rm with} \quad \beta^\e \sim \varepsilon,
\end{equation*}
showing the exponentially slow convergence of the interface location toward its equilibrium $\xi=0$. More precisely, the speed rate of convergence is proportional to $\varepsilon$, and the convergence is slower as $\e$ becomes smaller.
\begin{remark}{ \rm
The previous construction for the family of functions $\{ U^\varepsilon(x;\xi) \}$ is useful since we can derive an explicit expression for an element of the family, so that we can explicitly develop computations; also, it is possible to generalize this construction for a general nonlinearity $f$, by using in \eqref{approx1} the solution to $\e \d^2_{x} u = \d_x f(u)$ (instead of $U^\e_{{}_{\rm vsc}}$, solution to $\e \d^2_{x} u = u \d_x u$).
 However, this construction does not lead to an optimal estimate for the terms $\Omega^\varepsilon$ and $\theta^\varepsilon$, since we expect they behave like $e^{-1/\varepsilon}$.
}
\end{remark}

\vskip0.2cm
Following the line of \cite{MS}, another possible way to build up the family $\{ U^\e(x;\xi)\}$ consists in matching two exact steady states in the intervals $(0,\xi)$ and $(\xi,\ell)$ at $x=\xi$, under appropriate boundary conditions.  With such a construction $U^\e\in C^0(I)$ but it is not a $C^1$-matched function. In this case we have no an explicit expression for an element of the family of approximate steady states, and we can only state that the error $\mathcal P^\e[U^\e]$ is concentrated in $x=\xi$, and it is defined as
\begin{equation*}
\mathcal P^\e[U^\e(x;\xi)] = [\![ \partial_x U^\e]\!]_{x=\xi} \delta_{x=\xi},
\end{equation*}
so that
$
\Omega^\e(\xi) \sim [\![ \partial_x U^\e]\!]_{x=\xi}.
$
Motivated by the previous papers \cite{MS, MS2, Str12}, we expect this term to be exponentially small in $\e$, even if we are not able to prove it in this present paper. 

This possible construction is meaningful since it propose a way to build up a family of approximate steady states for any choice of $f$ that should lead to optimal estimates for the error terms, in contrast with the one given in \eqref{approx1}; indeed, with such a construction, the specific form of the nonlinearity $f$ is really taken into account when solving the viscous stationary equation in order to define $U^\e(x;\xi)$.  The main difficulty here is to be able to have an explicit expression for the exact steady state: in principle, this could be done numerically, and the computation of the error $\Omega^\e$ should follow easily.

\subsection{The spectrum of the linearized operator} We analyze the spectrum of the linearized operator $\mathcal L^\e_\xi$, and we give a precise distribution of its ({\bf real}) eigenvalues, in order to show that the general technique developed in Section \ref{sect:general} is indeed applicable in the case of the Burgers-Sivashinsky equations. 

Chosen an approximate steady state $U^\e(x;\xi)$ satisfying the boundary conditions $U^\e(0)=U^\e(\ell)=0$, and linearizing the equation \eqref{BSexample} around $U^\e$, we end up with the following linearized problem
\begin{equation}\label{opeesempio}
\partial_t v=\mathcal L^\e_\xi v:= \e \d_x^2 v- \d_x (U^\e v)+v, \quad v(0)=v(\ell)=0.
\end{equation}
First of all, let us notice that $\lambda^\e$ is an eigenvalue for $\mathcal L^\e_\xi$ if and only if $\lambda^\e-1$ is an eigenvalue for the differential linear diffusion-transport operator
\begin{equation}\label{operidotto}
\mathcal L^{\e,vsc}_\xi v := \e \d_x^2 v- \d_x (U^\e v),
\end{equation}
so that we reduce to the study of the eigenvalue problem for $\mathcal L^{\e,vsc}_\xi$. We recall here that $U^\e(x) \to U_{hyp}(x)$ for $\e \to 0$, where $U_{hyp}$ is defined as $U_{hyp}(x)=x-\xi$. More precisely there exists $C>0$ such that
\begin{equation*}
|U^\e-U_{hyp}|_{{}_{L^1}} \leq C \, \e.
\end{equation*}

\subsubsection{Estimate for the first eigenvalue} An asymptotic expression for the first eigenvalue of the linearized operator $\mathcal L^\e_\xi$ has been furnished by X. Sun and M. Ward in \cite{SunWard99}.  Here the authors linearize the equation \eqref{BSexample} around an approximate steady state obtained by using the so called method of matched asymptotic expansion; moreover they show that the principal eigenvalue associated with this linearization is positive and has the following asymptotic expression (for more details, see \cite[Section 3]{SunWard99})
\begin{equation*}
\lambda_1^\e(\xi) \sim \frac{1}{\e} \left [  \xi \, \left(\xi- c\,  \e^{1/2}\right) e^{-\xi^2/2\e}+ (\ell-\xi)\left((\ell-\xi)-c \, \e^{1/2}\right)  e^{-(\ell-\xi)^2/2\e}\right].
\end{equation*} 
In particular, the equivalence holds true in the limit $\e \to 0$. Positivity of $\lambda_1^\e$ is strictly related to the instability property of the stationary solution $U^\e_{{}_{M}}(x)$. However, even if $\lambda_1^\e$ is positive, it is exponentially small with respect to $\e$, leading to a metastable behavior for solutions starting from an initial datum close to $U^\e_{{}_{M}}$. Indeed the dynamics takes place within a time scale of order $e^{1/\varepsilon}$, since the long time behavior of solutions is described by terms of order $e^{\lambda_1^\varepsilon t}$.

\subsubsection{Estimate for the second eigenvalue} We mean to control from above the location of the second (and subsequent) eigenvalue of $\mathcal L^{\e,vsc}_\xi$, in order to control the location of the eigenvalues of $\mathcal L^\e_\xi$.  Let us introduce the self-adjoint operator
\begin{equation}\label{SAope}
\mathcal M^\e_{\xi} v := \e^2 \d_x^2 v - a^\e(x;\xi(t)) v, \quad a^\e(x;\xi(t)):= \left(\frac{U^\e}{2}\right)^2+\frac{1}{2} \, \e \, \d_x U^\e.
\end{equation}
It is easy to check that if $\varphi^\e$ is an eigenfunction of  \eqref{operidotto} of eigenvalue $\lambda^\e$, then the function $\psi^\e$ defined as
\begin{equation}\label{collauto}
\psi^\e(x;\xi)= \exp\left( -\frac{1}{2\e} \int_{x_0}^x U^\e(y) dy \right) \varphi^\e(x;\xi)
\end{equation}
is an eigenfunction of \eqref{SAope} relative to the eigenvalue $\mu^\e=\e\lambda^\e$. 

\vskip0.2cm
Now let us consider the couples $(\varphi^\e_2,\lambda^\e_2)$ and $(\psi^\e_2,\mu^\e_2)$. Since $\lambda_2^\e$ is the second eigenvalue, then there exists $x_0 \in (0,\ell)$ such that  $\varphi^\e_2(x_0)=0$. Hence, $\varphi^\e_2$ restricted to $(x_0,\ell)$ can be seen as the first eigenfunction  (relative to the first eigenvalue) of the linearized operator $\mathcal L^{\e,vsc}_\xi$ in the interval $(x_0,\ell)$ and with Dirichlet boundary conditions. In other words, $\varphi^\e_2(x)$ can be seen as the first eigenfuction of the linearized operator obtained by linearizing \eqref{BSexample} around $U^{\e,+}(x,x_0,\xi)$, that is an approximation of the unique positive stationary solution to \eqref{BSexample} in the interval $(x_0,\ell)$. The same argument holds for $\psi_2^\e(x)$.

From now on, without loss of generality, we restrict our analysis to the interval $J=(x_0,\ell)$ and we renormalize $\psi_2^\e$ so that $\max \psi_2^\e =1$. Integrating over $J$ we obtain
\begin{equation}\label{disuglambda2}
\lambda^\e_2 \int_{x_0}^\ell {\varphi^\e_2} dx = \e ({\varphi^\e_2}'(\ell)-{\varphi^\e_2}'(x_0)) < -\e {\varphi^\e_2}'(x_0).
\end{equation}
Moreover, because of \eqref{collauto} we have
\begin{equation*}
{\varphi^\e_2}'(x)= {\psi^\e_2}'(x)\exp\left( \frac{1}{2\e} \int_{x_0}^x U^\e(y) dy \right)+ \psi^\e_2(x) \exp\left( \frac{1}{2\e} \int_{x_0}^x U^\e(y) dy \right) \frac{U^\e(x)}{2\e},
\end{equation*}
so that, since $\psi^\e_2(x_0)=0$, we get ${\varphi^\e_2}'(x_0)={\psi^\e_2}'(x_0)$. Hence, from \eqref{disuglambda2} we can state that
\begin{equation*}
|\lambda_2^\e| > \e I_\e^{-1} {\psi^\e_2}'(x_0), \qquad I_\e:= \int_{x_0}^\ell \exp \left( \frac{1}{2\e} \int_{x_0}^x U^\e(y) \, dy \right) \, dx.
\end{equation*}
Now we need an estimate from above for $I_\e$.  We get
\begin{equation}\label{Ieps}
\begin{aligned}
I_\e &\leq e^{|U^\e-U_{{}_{hyp}}|_{{}_{L^1}}/2\e} \int_{x_0}^\ell \exp\left(\frac{1}{2\e} \int_{x_0}^x y-\xi \, dy \right) \, dx \\
& \leq  e^{C} e^{-(x_0-\xi)^2/4\e} \int_{x_0}^\ell e^{(x-\xi)^2/4\e} \, dx, \\
\end{aligned}
\end{equation}
where we used $|U^\e-U_{{}_{hyp}}|_{{}_{L^1}} \leq C \e$. Now, recalling the definition of the imaginary error function
$${\rm erfi}(x):=\frac{2}{\sqrt{\pi}} \int_0^x e^{t^2} \,dt,$$
we state that
\begin{equation*}
\int_{x_0}^\ell e^{\frac{(x-\xi)^2}{4\varepsilon}} \, dx = \sqrt{\pi \varepsilon} \left[ {\rm erfi}\left( \frac{\ell-\xi}{2\sqrt{\varepsilon}}\right)-{\rm erfi}\left( \frac{x_0-\xi}{2\sqrt{\varepsilon}}\right)\right].
\end{equation*}
Moreover, from the asymptotic expansion of ${\rm erfi}(x)$ for $x \to +\infty$, we have, in the limit $\varepsilon \to 0$
\begin{equation}\label{asyerfi}
{\rm erfi}\left( \frac{\ell-\xi}{2\sqrt{\varepsilon}}\right) \sim e^{\frac{(\ell-\xi)^2}{4\varepsilon}} \quad {\rm and} \quad {\rm erfi}\left( \frac{x_0-\xi}{2\sqrt{\varepsilon}}\right) \sim e^{\frac{(x_0-\xi)^2}{4\varepsilon}}.
\end{equation}
From  \eqref{Ieps} and \eqref{asyerfi}, we deduce
\begin{equation*}
\begin{aligned}
I_\e &\sim C \sqrt{\varepsilon} \, e^{-(x_0-\xi)^2/4\varepsilon} \left[ e^{(\ell-\xi)^2/4\varepsilon}-e^{(x_0-\xi)^2/4\varepsilon}\right] \\
& = C \sqrt{\varepsilon} \,\left[ \exp\left(\frac{(\ell-\xi)^2-(x_0-\xi)^2}{4\varepsilon}\right)-1  \right].
\end{aligned}
\end{equation*}
Hence
\begin{equation*}
I_{\varepsilon}^{-1} \sim  \frac{1}{\sqrt{\varepsilon}} \, \frac{1}{\exp\left(\frac{(\ell-\xi)^2-(x_0-\xi)^2}{4\varepsilon}\right)-1},
\end{equation*}
so that, provided $x_0$, $\ell$ and $\xi$ such that $2\xi > x_0+ \ell$, we obtain
\begin{equation}\label{usalo0}
|\lambda^\e_2| > \sqrt{\e} \, {\psi^\e_2}'(x_0).
\end{equation}
To complete our computations, we need an estimate from below for ${\psi^\e_2}'(x_0)$. To this aim, let us denote by $x_{{}_{M}} \in (x_0, \ell)$ the $x$ value such that $\psi^\e_2(x_{{}_{M}})=\max \psi^\e_2$, and let us renormalize $\psi^\e_2$ such that $\psi^\e_2 (x_{{}_{M}})=1$. It is possible to prove that $x_{{}_{M}} \to \ell$ as $\e \to 0$; hence $|x_{{}_{M}}-\ell| \leq c \, \e$, that is $|x_0-x_{{}_{M}}| \geq c \, \e$. Therefore, for each $x \in (x_0,x_{{}_{M}})$, we get
\begin{equation*}
{\psi^\e_2}'(x) = \frac{1}{x_{{}_{M}}-x_0} \geq \frac{1}{c \, \e},
\end{equation*}
so that, since $\psi^\e_2$ is concave
\begin{equation}\label{usalo}
{\psi^\e_2}'(x_0) \geq {\psi^\e_2}'(x) \geq \frac{1}{c \, \e}.
\end{equation}
Plugging \eqref{usalo} into \eqref{usalo0}, we end up with
\begin{equation*}
|\lambda_2^\e| \geq C/ \sqrt{\e} \ \ \  \Longleftrightarrow \ \ \ \lambda_2^\e \leq -C/\sqrt{\e},
\end{equation*}
proving that all the eigenvalues of $\mathcal L^{\e,vsc}_{\xi}$ (and of $\mathcal L^\e_\xi$ as well) for $k\geq 2$ are negative, bounded away from zero, and behave like $-1/\sqrt{\e}$.

\begin{remark}{\rm
The spectral analysis performed above can be adapted to the case of a general function $f$ satisfying hypotheses \eqref{ipof}, provided to substitute $f'(U^\e)$ instead of $U^\e$ in \eqref{opeesempio}, and so on in the following computations. This is important especially when trying to find an estimate from above for $I^\e$; we also point out that in the general case one should verify the stronger condition  $|f'(U^\e)-f'(U_{{}_{hyp}})|_{{}_{L^1}} \leq C \e$, and should construct  $U^\e$ accordingly.

} 
\end{remark}

\vskip3cm


\begin{thebibliography}{99}

\bibitem{AlikBateFusc91}
Alikakos, N.; Bates, P.W.; Fusco, G.;
{\it Slow motion for the Cahn-Hilliard equation in one space dimension},
J. Differential Equations 90 (1991) no. 1, 81--135. 

\bibitem{AroCraPel82}
Aronson, D. G.; Crandall, M. G.; Peletier, L. A.;
{\it Stabilization of solutions of a degenerate nonlinear diffusion problem},
Nonlinear Anal. TMA 6, (1982) 1001--1022.

\bibitem{BardLeRoNede79}
Bardos, C.; le Roux, A. Y.; N\'ed\'elec, J.-C.;
{\it First order quasilinear equations with boundary conditions},
Comm. Partial Differential Equations 4 (1979), no. 9, 1017--1034. 

%
\bibitem{BeckWayn09}
Beck, M.; Wayne, C. E.;
{\it Using global invariant manifolds to understand metastability 
in the Burgers equation with small viscosity},
SIAM J. Appl. Dyn. Syst. 8 (2009) no. 3, 1043--1065. 

\bibitem{BerKamSiv95}
Berestycki, H.; Kamin S.; Sivashinsky G.;
{\it Nonlinear dynamics and metastability in a Burgers type equation}
Compte Rendus Acad. Sci., Paris t. 321, (1995) SeriŽ 1, 185--190.

\bibitem{BerKamSiv01}
Berestycki, H.; Kamin S.; Sivashinsky G.;
{\it Metastability in a  flame front evolution equation}
 Interfaces Free Bound. 3 (2001), no. 4, 361--392. 
 
 \bibitem{BobOreRoy87}
 Bobidus L. E.; O'Regan D.; Royalty W. D.;
 {\it Steady-state reaction-diffusion-convection equations: dead cores and singular perturbations},
 Nonlinear Analysis TMA 11 (1987), no. 4, 527--538.

\bibitem{CarrPego89}
Carr, J.; Pego, R. L.; 
{\it Metastable patterns in solutions of $u_t=\varepsilon^2 u_{xx}+f(u)$}, 
Comm. Pure Appl. Math. 42 (1989) no. 5, 523--576. 


\bibitem{CheLevSac88}
Chen, T.; Levine, H. A.; Sacks, P. E.;
{\it Analysis of a convective reaction-diffusion equation },
Nonlinear Analysis TMA 12 (1988), no. 12, 1349--1370.

%
%
%

\bibitem{Evans}
Evans, L. C.;
{\it Partial differential equations},
 Second edition. Graduate Studies in Mathematics, 19. American Mathematical Society, Providence, RI, 2010. xxii+749 pp. ISBN: 978-0-8218-4974-3, 35-01.

\bibitem{FuscHale89}
Fusco, G.; Hale, J. K.;
{\it Slow-motion manifolds, dormant instability, and singular perturbations},
J. Dynam. Differential Equations 1 (1989) no. 1, 75--94. 

\bibitem{Goo94}
Goodman, J.;
{\it Stability of the Kuramoto-Sivashinsky and related systems},
Comm. Pure Appl. Math. 47 (1994), 293--306.

\bibitem{HillSul95}
Hill, A. T.; Suli, E.;
{\it Dynamics of a nonlinear convection-diffusion equation in multidimensional bounded domains },
Proc. Roy. Soc. Ed. 125A (1995), 439--448.

\bibitem{How88}
Howes, F. A.;
{\it The asymptotic stability of steady solutions of reaction-convection-diffusion equations},
J. Reine Angew. Math. 388 (1988), 212--220.

\bibitem{HowWhi88}
Howes, F. A.; Whitaker, S.;
{\it Asymptotic stability in the presence of convection},
Nonlinear Analysis TMA 12 (1988), no. 12, 1451--1459.



%
%
%
%

\bibitem{Kru70}
Kruzkov S. N.;
{\it First order quasilinear equations in several independent variables},
Mat. Sbornik 81 (1970), 228--255. Math. USSR Sbornik 10 (1970), 217--243. 

\bibitem{Lax54}
Lax, P.D.;
{\it Weak solutions of nonlinear hyperbolic equations and their numerical computations},
Commun. Pure Appl. Math. 7 (1954), 159--193.

\bibitem{LafoOMal94}
Laforgue J.G.L.; O'Malley R.E. Jr.; 
{\it On the motion of viscous shocks and the supersensitivity of their steady-state limits},
Methods Appl. Anal. 1 (1994), no. 4, 465--487. 

\bibitem{LafoOMal95}
Laforgue J.G.L.; O'Malley R.E. Jr.; 
{\it Shock layer movement for Burgers equation},
Perturbations  methods in physical mathematics (Troy, NY, 1993). SIAM J. Appl. Math. 55 (1995)  no. 2, 332--347.

\bibitem{Lyb91}
Lyberopoulos, N.;
{\it  Asymptotic oscillations of solutions to scalar conservation laws with convexity under the action of a linear excitation},
Quart. Applied Math. 48 (1991) no. 4, 755--765.



%
%

\bibitem{MascTerr99}
Mascia, C.; Terracina, A.;
{\it Large-time behavior for conservation laws with source in a bounded domain}, 
J. Differential Equations 159 (1999), no. 2, 485--514. 

\bibitem{MS}
Mascia C.; Strani M.;
{\it Metastability for nonlinear parabolic equations with application to scalar conservation laws },
SIAM J. Math. Anal. 45 (2013), no. 5,  3084--3113. 

\bibitem{MS2}
Mascia C.; Strani, M.;
{\it Slow motion for compressible isentropic Navier-Stokes equations}, preprint.

%

\bibitem{Nes96}
Nessyahu, H.;
{\it Convergence rate of approximate solutions to weakly coupled nonlinear system},
Math. Comput. 65, (1996), 575--586.



\bibitem{Ole63}
Oleinik, O. A.;
{\it Discontinuous solutions of nonlinear differential equations},
Am. Math. Soc. Transl. 2 (1963) , 95--172.

\bibitem{OttoRezn06}
Otto, F.; Reznikoff, M.G.;
{\it Slow motion of gradient flows},
J. Differential Equations 237 (2006) no. 2, 372--420. 

\bibitem{Pego89}
Pego, R. L.;
{\it Front migration in the nonlinear Cahn-Hilliard equation},
Proc. Roy. Soc. London Ser. A 422 (1989) no. 1863, 261--278. 


\bibitem{RakSiv87}
Rakib Z.; Sivashinsky G.I.;
{\it Instabilities in upward propagating flames},
Combust. Sci. and Tech. 54 (1987), 69--84.

\bibitem{ReynWard95}
Reyna L.G.; Ward M.J.;
{\it On the exponentially slow motion of a viscous shock},
Comm. Pure Appl. Math. 48 (1995), no. 2, 79--120. 


\bibitem{Sat72}
Sattinger, D. H.;
{\it Monotone methods in nonlinear elliptic and parabolic boundary value problems},
Ind. Univ. Math. J. 21, (1972), 979--1000.


\bibitem{Str12}
Strani, M.;
{\it Slow motion of internal shock layers for the Jin-Xin system in one space dymension}, J. Dyn.Diff. Eq., 2014, 10.1007/s10884-014-9418-6 2014.

\bibitem{Str13}
Strani, M.;
{\it Nonlinear metastability for a parabolic system of reaction-diffusion equations },
submitted 2014.

\bibitem{SunWard99}
Sun, X.; Ward, M. J.; 
{\it Metastability for a generalized Burgers equation with application to propagating flame fronts},
European J. Appl. Math. 10 (1999), no. 1, 27--53

%
%

\end{thebibliography}
\end{document}